\newcommand{\bC}{\mathbb{C}}
\newcommand{\bI}{\mathbb{I}}
\newcommand{\bP}{\mathbb{P}}
\newcommand{\bR}{\mathbb{R}}
\newcommand{\bZ}{\mathbb{Z}}
\newcommand{\bT}{\mathbb{T}}
\newcommand{\cM}{\mathcal{M}}
\newcommand{\cO}{\mathcal{O}}
\newcommand{\cF}{\mathcal F}
\newcommand{\Hom}{\mathrm{Hom}}
\newcommand{\coker}{\mathrm{coker}}
\newcommand{\Tot}{\mathrm{Tot}}
\newcommand{\Lie}{\mathrm{Lie}}
\newcommand{\fn}{\mathfrak{n}}
\newcommand{\ft}{\mathfrak{t}}
\newcommand{\tv}{\widetilde{v}}
\newcommand{\tit}{\widetilde{t}}
\newcommand{\tB}{\widetilde B}
\newcommand{\hA}{\widehat A}
\newcommand{\la}{\lambda}
\newcommand{\dd}{\partial}
\newcommand{\vin}{\rotatebox[origin=c]{90}{$\in$}}
\newtheorem{theorem}{Theorem}[section]
\newtheorem{lemma}[theorem]{Lemma} 
\newtheorem{corollary}[theorem]{Corollary}
\theoremstyle{definition} 
\newtheorem{definition}[theorem]{Definition}
\newtheorem{remark}[theorem]{Remark}
\newtheorem{example}[theorem]{Example}
\begin{document}

\title{Action-angle and complex coordinates on toric manifolds}

\author{Haniya Azam, Catherine Cannizzo, Heather Lee}

 \email{Haniya Azam: haniya.azam@lums.edu.pk} 
 \email{Catherine Cannizzo: ccannizzo@scgp.stonybrook.edu}
 \email{Heather Lee: heathml@uw.edu}

\maketitle

\begin{abstract}
In this article, we provide an exposition about symplectic toric manifolds, which are symplectic manifolds $(M^{2n}, \omega)$ equipped with an effective Hamiltonian $\bT^n\cong (S^1)^n$-action.  We summarize the construction of $M$ as a symplectic quotient of $\bC^d$, the $\bT^n$-actions on $M$ and their moment maps, and Guillemin's K\"ahler potential on $M$.  While the theories presented in this paper are for compact toric manifolds, they do carry over for some noncompact examples as well, such as the canonical line bundle $K_M$, which is one of our main running examples, along with the complex projective space $\bP^n$ and its canonical bundle $K_{\bP^n}$.  One main topic explored in this article is how to write the moment map in terms of the complex homogeneous coordinates $z\in \bC^d$, or equivalently, the relationship between the action-angle coordinates and the complex toric coordinates.  We end with a brief review of homological mirror symmetry for toric geometries, where the main connection with the rest of the paper is that $K_M$ provides a prototypical class of examples of a Calabi-Yau toric manifold $Y$ which serves as the total space of a symplectic fibration $W: Y \to \bC$ with a singular fiber above $0$, known as a Landau-Ginzburg model in mirror symmetry.  Here we write $W$ in terms of the action-angle coordinates, which will prove to be useful in understanding the geometry of the fibration in our forthcoming work \cite{ACLL}.

\end{abstract}

\tableofcontents

\section{Introduction}

We start off with an overview of the two sets of coordinates, action-angle and complex, on compact toric manifolds.

Compact symplectic toric manifolds $(M^{2n}, \omega)$ modulo $\bT^n\cong (S^1)^n$ equivariant symplectomorphisms are in one-to-one correspondence with a class of compact convex polytopes known as Delzant polytopes, modulo translation \cite{Delzant}.  One direction of this correspondence is  Atiyah \cite{atiyah} and Guillemin-Sternberg's \cite{GS} convexity theorem that given a symplectic toric manifold $(M^{2n}, \omega, \mu)$, where $\mu: M^{2n} \to \mathbb R^n$ is the moment map of the Hamiltonian toric action, the image  $\mu(M^{2n})$ is a convex polytope.  Conversely, given a Delzant polytope ${\mathrm{\Delta}}^n$, Delzant's construction produces a compact symplectic toric manifold $M^{2n}_{{\mathrm{\Delta}}}$ such that the image of its moment map $\mu:M^{2n}_{\mathrm{\Delta}}\to \mathbb R^n$ is ${\mathrm{\Delta}}^n$.  The toric manifold $M^{2n}_{{\mathrm{\Delta}}}$ is obtained from a symplectic reduction of $\bC^d$ (where $d$ is the number of facets of ${\mathrm{\Delta}}^n$) with respect to the action of a $(d-n)$-dimensional subtorus $N \subset \bT^d$.  Denote by $\mu_N$ the moment map of this action by $N$.  The reduced space, $M^{2n}_{{\mathrm{\Delta}}} = \bC^d/\!\!/N = \mu_N^{-1}(a)/N$ (for regular values $a$), carries a Hamiltonian $\bT^n\cong \bT^d/N$-action and a canonical $\bT^n$-invariant symplectic form $\omega$.   Each $\bT^n$-orbit is a fiber of the moment map $\mu$, i.e. the preimage of a point in ${\mathrm{\Delta}}^n$ under $\mu$.  This $\bT^n$-action is free on the preimage  $\mu^{-1}(\mathring{{\mathrm{\Delta}}})$ of the interior $\mathring{{\mathrm{\Delta}}}$ of ${\mathrm{\Delta}}^n$, and it is degenerate on the complement, which is the preimage $\mu^{-1}(\partial {\mathrm{\Delta}}^n)$ of the boundary.  Therefore, $\mu^{-1}(\mathring{{\mathrm{\Delta}}})$  is diffeomorphic to $\mathring{{\mathrm{\Delta}}}\times \bT^n$.

 Since $\bC^d$ carries a $\bT^d$-invariant K\"ahler structure, it induces an $\omega$-compatible $\bT^n$-invariant K\"ahler structure $(g, J)$ on the reduced space $M^{2n}_{\mathrm{\Delta}}$.  The complex manifold $(M^{2n}_{\mathrm{\Delta}}, J)$ is then a toric variety with the complexified torus $\bT^n_\bC$-action.  This complex torus $\bT^n_\bC\cong (\bC^*)^n$ acts freely and transitively
on $\mu^{-1}(\mathring{{\mathrm{\Delta}}})$, hence $\mu^{-1}(\mathring{{\mathrm{\Delta}}})$ is diffeomorphic to $\bT^n_\bC$.   We now have two natural sets of coordinates on $\mu^{-1}(\mathring{{\mathrm{\Delta}}})\subset M^{2n}_{\mathrm{\Delta}}$ coming from the action-angle coordinates on $\mathring{{\mathrm{\Delta}}}\times \bT^n$ and the complex toric coordinates on $\bT^n_\bC$ via the identification
\begin{equation}
\mathring{{\mathrm{\Delta}}} \times \bT^n \cong \mu^{-1}(\mathring{{\mathrm{\Delta}}}) \cong \bT_\bC^{n}.
\end{equation}
These two sets of coordinates are related by Legendre transform; however, explicit formulas for switching between them might be complicated or sometimes impossible to obtain,  as we will illustrate with examples (most elaborately in Section \ref{subsec: KP1}). Understanding the relation between these two sets of coordinates was the initial motivation for this paper (see Section \ref{subsec:monodromy}) and is one of the main topics explored.

For the K\"ahler form $\omega$ on $M$, Guillemin \cite{G94} extended this story further by showing that the $\omega$ possesses a $\bT^n$-invariant K\"ahler potential  $F: \bT^n_\bC\to \bR$ such that $\omega=2 i \partial \overline\partial F$ and it only depends on the real part of the Lie algebra $\Lie(\bT^n_\bC)\cong \bC^n$ coordinates, so it is a function $F:\bR^n\to \bR$.  Guillemin also provided a dual potential $G:\mathring{{\mathrm{\Delta}}}\to \bR$, which is a function of the moment map coordinates and it is a Legendre transform of $F$.  The formulas for the potential functions are given by (up to adding constants) the combinatorial structure of ${\mathrm{\Delta}}^n$.  

Even though the theories above are all done for compact manifolds, the idea behind describing symplectic toric manifolds via symplectic reduction carries over to noncompact manifolds as well.  However, for noncompact symplectic toric manifolds, the moment map image might not be convex and the fibers of the moment map might not be connected; see \cite{KL15} for a classification result for noncompact symplectic manifolds.  In this paper, for noncompact examples, we will only focus on those where the theories for compact toric manifolds carry over, i.e. those with convex moment map images and connected moment map fibers.

In fact, when it comes to noncompact examples, we will almost exclusively focus on the total space of the canonical line bundle $\pi: K_M \to M$ of a compact toric manifold.  Because the tangent bundle $T_{K_M}\cong \pi^*T_M\oplus K_M$ and $c_1(K_M)=-c_1(T_M)$, we get that $c_1(T_{K_M})=0$, so $K_M$ is a Calabi-Yau (CY) toric manifold. 
In connection to mirror symmetry, the singular symplectic fibration $W: K_M\to \bC$, where $W$ is the product of the homogeneous coordinates, gives an example of a Landau-Ginzburg model $(K_M,W)$.  Furthermore, when $M$ is in addition Fano, the critical locus of the fibration $W:K_M\to \bC$ provides another source of Calabi-Yau manifolds that are of interest to many. This is behind our motivation for understanding the symplectic structure on $K_M$, and more generally, $(K_M, W)$ above serves as a prototypical class of examples of a toric Landau-Ginzburg model $(Y, W)$, where $Y$ is a CY toric manifold.  

Here is an outline of the paper. In Section \ref{sec: symp_quot}, we summarize the construction of compact toric manifolds $M^{2n}$ as symplectic quotients.  In Section \ref{sec: moment map}, we discuss the $\bT^n$-action on $M^{2n}$ and its moment map.  In Section \ref{sec: kahler},  we present Guillemin's \cite{G94} K\"ahler potentials.  We use many examples to illustrate the theories, with the complex projective space $\bP^n$ being the main running example throughout Sections 2--4.   Another running example throughout this paper is the aforementioned noncompact example of $K_M$.  (In Sections \ref{sec: symp_quot} and \ref{sec: moment map}, the $K_M$ example is contained in Sections \ref{subsec: KM}, \ref{sec: KM alpha}, \ref{sec: KM justification}, \ref{subsec: KP1}.) Finally in Section \ref{sec: HMS}, we explain some applications to mirror symmetry. In particular, we mention how to compute the monodromy of a fiber around the singularity of a symplectic fibration given by a superpotential on $K_M$, where the horizontal distribution is the $\omega$-orthogonal complement to the tangent space of each fiber; for this we use the symplectic form $\omega$ obtained via symplectic reduction. Section \ref{sec:notation} lists notation used in the text.

\subsection*{Acknowledgement} We thank Chiu-Chu Melissa Liu for her significant involvement in an earlier draft of this paper and for her continued support.  We would like to thank Ana Rita Pires for her encouragement and comments on  an earlier draft. We thank the referees for their helpful and detailed comments. We are also grateful to the 2019 Research Collaboration Conference for Women in Symplectic and Contact Geometry and Topology (WiSCon), during which this project was initiated, and the Institute for Computational and Experimental Research in Mathematics (ICERM) at Brown University for hosting this conference.   

\section{Toric manifolds as symplectic quotients}\label{sec: symp_quot}
In this section, we describe the compact symplectic toric manifold $M$ of complex dimension $n$, but we will defer the discussion about the toric $\bT^n$-action on $M$ to Section 3 and the K\"{a}hler structure to Section 4.    The description of $M$ doesn't start until Section \ref{subsec:compact_red}.  Below we give a brief summary of the content of each subsection.

Section \ref{sec: cx quotient} is entirely complex geometric, in which we set up some notation and describe a proper and holomorphic action of a complex $(d-n)$-dimensional torus $N_\bC= (\bC^*)^{d-n}$ on $\bC^d$. We then introduce an open set $U$ where $N_\bC$ acts freely, and construct the quotient $U/N_\bC$ with the induced complex manifold structure.  We take this complex geometric starting point because many examples we would like to consider, such as complex projective space, are most naturally described in this way.  

Section \ref{subsec:compact_red} gives the description of $M$ as a symplectic quotient of $\bC^d$ (with the standard symplectic structure) with respect to a Hamiltonian $N$-action, where $N=U(1)^{d-n}$ is a real $(d-n)$-dimensional torus.  More explicitly, $M$ can be described as a quotient $\mu^{-1}_N(a)/N$ where $a$ is a regular value of the moment map $\mu_N$ of the $N$-action.  This symplectic quotient $\mu^{-1}_N(a)/N$ can be identified with a complex geometric quotient $U_a/N_\bC$ for a certain Zariski open set $U_a\subset \bC^d$ on which $N_\bC$ acts freely. Now $U=U_a$ depends on which chamber $a$ is in, where going from one chamber to another amounts to crossing a wall of values where $\mu_N$ is not regular.

We use the complex project space $\bP^n$ as a running example to illustrate the set-up and theories in Section \ref{sec: cx quotient} and Section \ref{subsec:compact_red} (see Example \ref{ex: cx Pn}, Example \ref{ex: muN Pn}, and Example \ref{ex: Pn}).  In Section \ref{subsec: KM}, we provide an example of a noncompact toric manifold $K_M$, which is the canonical line bundle of a compact toric manifold $M$.

\subsection{Complex geometric quotients} \label{sec: cx quotient} Consider the action of the complex torus $\bT^d_\bC=(\bC^*)^d$ on $\bC^d$ given by
\begin{equation}\label{eq: Td action}
 \begin{array}{ccl}
 	\bT^d_\bC \times \bC^d & \to & \bC^d\\
       (\tilde{t},z) & \mapsto & \tit\cdot z  = (\tit_1 z_1,\ldots,\tit_d z_d),
  \end{array}
\end{equation}
where $\tit=(\tit_1,\ldots,\tit_d)\in \bT^d_\bC$ (to distinguish it from coordinates $t_i$ on the complex algebraic torus $\bT^{n = \dim_\bC M}_\bC$ in Chapter \ref{sec: moment map}) and $z=(z_1,\ldots, z_d) \in \bC^d$.

Let $N_\bC := (\bC^*)^{d-n}$ and fix any injective group homomorphism 
\begin{equation}\label{eq: rho_action}
\rho_\bC:  N_\bC=(\bC^*)^{d-n}  \longrightarrow \bT^d_\bC =(\bC^*)^d.
\end{equation}
Then the image $\rho_\bC(N_\bC)$ is a $(d-n)$-dimensional subtorus in $\bT_\bC^d$.
We then have a short exact sequence of abelian groups
\begin{equation}\label{eqn:NTT}
1\to N_\bC \stackrel{\rho_\bC}{\longrightarrow}\bT^d_\bC \stackrel{\beta_\bC}{\longrightarrow}\bT^n_\bC  \to 1,
\end{equation}
where $\bT^n_\bC :=\coker \rho_\bC= \bT^d_\bC/N_\bC \cong (\bC^*)^n$ and $\beta_\bC$ is any group homomorphism that makes the above sequence exact. Let's fix notations for the maps $\rho_{\bC}$ and $\beta_{\bC}$ below.
\begin{enumerate}
\item[(i)] The map $\rho_\bC$ is of the form 
\begin{equation} \label{eq: general rho}
\rho_\bC (\lambda_1,\ldots,\lambda_{d-n}) = \left(\prod_{\ell=1}^{d-n} \lambda_\ell^{Q^\ell_1},\ldots, \prod_{\ell=1}^{d-n} \lambda_\ell^{Q^\ell_d}\right)
\end{equation}
where $Q^\ell_k \in \bZ$, $1\leq \ell\leq d-n$, $1\leq k\leq d$.  
\item[(ii)] The map $\beta_\bC$  is of the form
\begin{equation}\label{eq: general beta}
\beta_\bC(\tit_1,\ldots, \tit_d) = \left(\prod_{k=1}^d \tit_k ^{v^k_1},\ldots, \prod_{k=1}^d \tit_k^{v^k_n}\right)
\end{equation} 
where $v^k_m \in \bZ$, $1\leq k \leq d$, $1\leq m\leq n$. 

\item[(iii)] The exactness of \eqref{eqn:NTT} implies the exponent on each $\la_\ell$ in the $m^{\textrm{th}}$ coordinate of $\beta_\bC \circ \rho_\bC$ is 0, namely
\begin{equation}\label{eqn:vQ} 
\sum_{k=1}^d  v^k_m Q^\ell_k =0, \quad 1\leq m\leq n, \quad 1\leq \ell\leq d-n. 
\end{equation}
\end{enumerate}
The choice of $\beta_\bC$ delimited by Equation \eqref{eqn:vQ} is not unique, and one can see this more easily by considering the linearized maps in matrix form.  Let  $\fn_\bC\cong \bC^{d-n}$, $\ft^d_\bC\cong \bC^d$, and $\ft_\bC^n\cong\bC^n$ be the Lie algebras of $N_\bC$, $\bT_\bC^d$, and $\bT_\bC^n$, respectively.
Taking the differentials of the two group homomorphisms in  \eqref{eqn:NTT} at the identity, we obtain the following short exact sequence of complex vector spaces: 
\begin{equation}\label{eqn:ntt}
0\to \fn_\bC  \stackrel{ (d \rho_\bC)_1 }{\longrightarrow}\ft^d_\bC \stackrel{ (d\beta_\bC)_1}{\longrightarrow}\ft^n_\bC \to 0.
\end{equation}
   The linear maps $(d\rho_{\bC})_1$ and $(d\beta_{\bC})_1$ are given by matrices $Q$ and $B$ respectively, where
\begin{equation}\label{eq: Q and B}
Q_{d\times (d-n)} = \left[ \begin{array}{lll}  Q^1_1 & \cdots & Q^{d-n}_1 \\
\vdots &  & \vdots\\
Q^1_d & \cdots & Q^{d-n}_d
 \end{array}\right],\quad
 B_{n\times d}= \left[\begin{array}{ccc} v^1_1 & \cdots & v^d_1 \\
\vdots &  & \vdots\\
v^1_n & \cdots & v^d_n
 \end{array}\right].
 \end{equation}
 Equation \eqref{eqn:vQ} is then equivalent to $BQ=0$. Given $Q$, the relation $BQ=0$ does not uniquely determine $B$ since it only requires that the row vectors of $B$ generate  $Q^\perp$. Given $Q$, to write $B: \bZ^d \to \bZ^d/\left<\mathrm{col}(Q) \right> \cong \bZ^n$ in matrix form we need to make an identification of $\bZ^d/\left<\mathrm{col}(Q) \right>$ with $\bZ^n$, where $\left<\mathrm{col}(Q) \right>$ denotes the column space of $Q$. Also because $B$ is surjective, $\left<\mathrm{col}(B)\right>=\bZ^n$ and the column vectors of $B$ are primitive.  Other choices for the matrix $B$ will hence differ by left multiplication of elements in $\mathrm{GL}(n,\bZ)$ with determinant $\pm 1$.  (On the other hand, if we fix $B$, then the choices for $Q$ would differ by right multiplication of elements in $\mathrm{GL}(d-n,\bZ)$ so as to keep $\ker B$ fixed).
 \begin{example}[$\bP(1,2,3)$] As an example, take $Q = [1,2,3]^T$, $d=3$, and $n=2$. First we make an identification of the quotient with $\bZ^{n=2}$. Extend the column vectors of $Q$ to a $\bZ$-basis for $\bZ^3$, given by 
 $$\{ f_1,f_2,f_3\}:=\{(1,2,3)^T,(0,1,0)^T ,(0,0,1)^T\},$$
 so that 
 $$\frac{\bZ^3}{\left<\mathrm{col}(Q) \right>}= \frac{\bZ f_1 \oplus \bZ f_2 \oplus \bZ f_3}{\bZ f_1} = \bZ f_2 \oplus \bZ f_3.$$ 
 Since $\ker B = \left<\mathrm{col}(Q) \right>$, let $B$ be the linear map $f_1 \mapsto (0,0)$, $f_2 \mapsto (1,0)$, and $f_3 \mapsto (0,1)$. Then using that 
 $$(1,0,0) = f_1-2f_2-3f_3 \implies B: (1,0,0) \mapsto (0,0)-2(1,0)-3(0,1)=(-2,-3),$$ 
 with respect to the standard bases on $\bZ^{d=3}$ on $\bZ^{n=2}$ we see that $B: \bZ^d \to \bZ^n$ is
 $$B = \left[\begin{matrix} -2 & 1 & 0\\ -3 & 0 & 1    \end{matrix} \right].$$
(This example leads to the weighted projective space $\bP(1,2,3)$, cf. the complex projective space $\bP^2=\bP(1,1,1)$ in Example \ref{ex: cx Pn}.)
 \qed
 \end{example}
  
Let $N_\bC= (\bC^*)^{d-n}$ act on $\bC^d$ by its image under $\rho_\bC: N_\bC \to \bT^d_\bC$, where the action of the subtorus $\rho_\bC(N_\bC)\subset \bT^d_\bC$ is inherited from the action of $\bT^d_\bC$.  In other words, this action is
 \begin{equation} \label{eq: Nc action}
 \begin{array}{ccl}
 N_\bC\times \bC^d & \to & \bC^d\\
(\lambda, z)& \mapsto & \lambda \cdot z := \rho_\bC(\lambda)\cdot z
\end{array}
\end{equation}
where $\lambda\in N_\bC$, $z\in \bC^d$, and $\rho_\bC(\lambda)\cdot z$ on the right hand side is given by  component-wise multiplication as in Equation \eqref{eq: Td action}. 

This $N_\bC$-action on $\bC^d$ is holomorphic and proper, but it is not free everywhere, so simply taking the quotient $\bC^d/N_\bC$ will not give us a manifold structure.  If $U$ is a subset of $\bC^d$ on which $N_\bC$ acts freely, then the quotient $U/N_\bC$ has a unique complex structure such that the quotient map $\pi: U\to U/N_\bC$
is holomorphic.  Many complex manifolds can be described in this way as a quotient, such as Example \ref{ex: cx Pn} for the complex projective space below.

\begin{example} [$\bP^n$]\label{ex: cx Pn} Consider $d=n+1$ and $N_\bC= (\bC^*)^{d-n=1}=\bC^*$.   The image of the embedding
\begin{equation}\label{eq: rho Pn}
\rho_\bC: N_\bC=\bC^* \to  \bT^d_\bC=(\bC^*)^{n+1}, \quad \rho_\bC(\lambda_1)=(\lambda_1, \ldots, \lambda_1)
\end{equation}
is a subtorus $\rho_\bC(N_\bC)\cong \bC^*$ in $\bT^d_\bC$, which gives the following $N_\bC=\bC^*$-action on $\bC^{d=n+1}$  
\[
\lambda_1\cdot (z_1,\ldots, z_{n+1})=(\lambda_1 z_1,\ldots, \lambda_1 z_{n+1}).
\]
Consider $U=\bC^{n+1}-\{0\}$; then the $N_\bC$ action on $U$ is free.  We find that the quotient $U/N_\bC$ 
\begin{equation}\label{eq: Pn cx quotient}
\bP^n=(\bC^{n+1}-\{0\})/\bC^*,
\end{equation}
is the complex projective space consisting of  complex lines through the origin in $\mathbb C^{n+1}$.

Note that so long as $\rho_\bC$ is the one given above in Equation \eqref{eq: rho Pn}, the resulting quotient is $\bP^n$ as in Equation \eqref{eq: Pn cx quotient}, no matter what $\beta_\bC$ is.  The choice of $\beta_\bC$ is not unique for a given $\rho_\bC$.  For the $\rho_\bC$ given by Equation \eqref{eq: rho Pn}, the linear map $(d\rho_\bC)_1$ has the matrix form \begin{equation}\label{eq: Pn Q}
Q=[\begin{array}{ccc} 1 & \cdots & 1\end{array}]^T.
\end{equation} One choice of $B$ such that $BQ=0$ is 
\begin{equation}\label{eq: B Pn}
B=
\left[
\begin{array}{ccc|c@{}c} 
 &  & & -1 \\
&\mathbb I_{n} & & \vdots \\
&  &  & -1 \\
\end{array}
\right],
\end{equation}
where $\mathbb I_n$ is the $n\times n$ identity matrix.  This matrix $B$ corresponds to 
\begin{equation}\label{eq: beta Pn}
\beta_\bC(\tit_1,\ldots,\tit_{n+1}) =  (\tit_1 \tit_{n+1}^{-1}, \ldots \tit_n \tit_{n+1}^{-1}).
\end{equation}
\qed 
\end{example}

\subsection{Symplectic quotients}\label{subsec:compact_red} 

 Let $N=U(1)^{d-n}$ be the maximal compact subgroup of $N_\bC$. The $N_\bC$-action on $\bC^d$ defined in Equation \eqref{eq: Nc action} restricts to a Hamiltonian $N$-action on the symplectic manifold
\begin{equation} \label{eq: Omega Cd}
\left(\bC^d, \omega_0=\frac{i}{2} \sum_{k=1}^d dz_k \wedge d\bar{z}_k\right).
\end{equation}
In this subsection, we first consider the symplectic quotient $\bC^d/\!\!/N$ of $(\bC^d,\omega_0)$ with respect to the action by $N$.  The symplectic quotient provided in Theorem \ref{thm: MW quotient} is a symplectic toric manifold.  After that, we will state in Theorem \ref{thm: GIT} that this symplectic quotient can be identified with a complex geometric quotient.  

To describe the symplectic quotient $\bC^d/\!\!/ N$, we first need to obtain the moment map of the $N$-action. Consider the maximal compact subgroup $\bT^d=U(1)^d$ of $\bT^d_\bC$. Then
the $\bT^d_\bC$-action on $\bC^d$ defined in Equation \eqref{eq: Td action} restricts to a Hamiltonian $\bT^d$-action on the symplectic manifold $(\bC^d, \omega_0)$ given in Equation \eqref{eq: Omega Cd}
with a moment map (which is unique up to addition of a constant vector in $\bR^d$ that plays a role in Equation \eqref{eq: L}) 
\begin{equation}\label{eq: Cd moment map}
\mu_{\bT^d} :\bC^d\to \bR^d, \quad \mu_{\bT^d}(z_1,\ldots,z_d) =\frac{1}{2}(|z_1|^2,\ldots, |z_d|^2).
\end{equation}
The image of $N$ under $\rho_\bC$ is a subtorus of $\bT^d$, and one can equivalently think of the $N$-action on $\bC^d$ as an action of its image $\rho_\bC(N)$, which  inherits the  action of $\bT^d$.  Hence, 
the moment map (which is unique up to addition of a constant vector in $\bR^{d-n}$) of the Hamiltonian $N$-action  is
\begin{equation}\label{eq: muN}
\begin{array}{c}
 \mu_N :=(d\rho_\bC)_1^* \circ \mu_{\bT^d}:  \bC^d\to \bR^{d-n},\\ \quad \mu_N(z_1,\ldots,z_d)  =\frac{1}{2} \left( \sum\limits_{k=1}^d Q^1_k |z_k|^2,\ldots,  \sum\limits_{k=1}^d Q^{d-n}_k |z_k|^2\right).
 \end{array}
\end{equation}
\begin{example}[$\bP^n$] \label{ex: muN Pn} In our running example $\bP^n$, the holomorphic $N_\bC=\bC^*$-action on $\bC^{n+1}$ restricts to a Hamiltonian $N=U(1)$-action on the  symplectic manifold $(\bC^{n+1},\omega_0)$. Its moment map can be calculated from the linear map $(d\rho_\bC)_1$, which in matrix form is given by Equation \eqref{eq: Pn Q}, and $\mu_{\bT^d}$ given in Equation \eqref{eq: Cd moment map}.   Up to a constant, it is 
\begin{equation}\label{eq:facets_mom_map}
\begin{array}{ll}
& \mu_N:  \bC^{n+1}\to \bR,\\
& \mu_N(z_1,\ldots,z_{n+1})  =(d\rho_\bC)_1^*\circ \mu_{\bT^d}(z_1,\ldots, z_{n+1})=\frac{1}{2}\sum\limits_{k=1}^{n+1} |z_k|^2.
\end{array}
\end{equation}
\qed
\end{example}

For  $a \in \bR^{d-n}$, the following two statements are equivalent: 
\begin{itemize}
\item $a$ is a regular value of $\mu_N$, meaning that $d\mu_N$ is surjective at each point in $\mu_N^{-1}(a)$, and hence $\mu^{-1}_N(a)$ is a smooth manifold of the expected real dimension $2d-(d-n)=d+n$. 
\item The $N$-action on $\mu^{-1}_N(a)$ is locally free. 
\end{itemize}
This is because the surjectivity of $d\mu_N$ is equivalent to the linear independence of the Hamiltonian  vector fields generated by the $d-n$ components of $\mu_N$.

For the rest of this paper, we will only be considering the cases where $a$ is a regular value of $\mu_N$ and that $N$ acts freely on $\mu_N^{-1}(a)$, not just locally free.  When the $N$ doesn't act freely, the quotient might be an orbifold.  We are now ready to describe the symplectic quotient $\bC^d/\!\!/N$ of $\bC^d$ by the action of $N$. 
\begin{theorem} [Marsden-Weinstein \cite{MW74} and Meyer \cite{Meyer} symplectic reduction]\label{thm: MW quotient}  Consider the Hamiltonian action on $(\bC^d, \omega_0)$ by the compact group $N$  with a moment map $\mu_N: \bC^d\to \bR^{d-n}$, as described above.  If $a$ is a regular value of $\mu_N$ and that $N$ acts freely on $\mu^{-1}_{N}(a)$,  then the symplectic quotient 
\[
M:=\bC^d/\!\!/ N = \mu_N^{-1}(a)/N
\]
admits a unique structure of a smooth manifold of real dimension $2n$ such that the projection $\pi_a:\mu_N^{-1}(a)\to \mu_N^{-1}(a)/N$ is a submersion, and it carries a canonical symplectic form $\omega_a$ such that 
\[
\pi_a^*\omega_a = \iota_a^*\omega_0,
\]
where  $\iota_a: \mu^{-1}_N(a)\hookrightarrow \bC^d$ is the inclusion.  
\end{theorem}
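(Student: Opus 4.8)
The plan is to run the classical Marsden--Weinstein--Meyer argument, whose four ingredients are: (1) the regular-value theorem, to make $\mu_N^{-1}(a)$ a submanifold; (2) the quotient manifold theorem, to make $\mu_N^{-1}(a)/N$ a manifold and $\pi_a$ a submersion; (3) a linear-algebra identity relating $\ker d\mu_N$ to the $\omega_0$-orthogonal of the orbit directions; and (4) descent of $\iota_a^*\omega_0$ through $\pi_a$. First I would fix $Z := \mu_N^{-1}(a)$. Since $a$ is a regular value, $Z$ is an embedded submanifold of $\bC^d$ of real codimension $d-n$, hence $\dim_\bR Z = 2d-(d-n) = d+n$. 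Because $N$ is compact its action on $\bC^d$ is proper, and it restricts to a proper action on the invariant subset $Z$; together with the freeness hypothesis, the quotient manifold theorem gives that $M = Z/N$ is a smooth manifold with $\pi_a\colon Z \to M$ a surjective submersion (indeed a principal $N$-bundle), and this is the unique smooth structure for which $\pi_a$ is a submersion. The dimension count is then $\dim_\bR M = (d+n)-\dim N = (d+n)-(d-n) = 2n$.

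The heart of the proof is the following pointwise statement. Fix $p \in Z$; write $\fn = \Lie(N)$, and for $\xi \in \fn$ let $X_\xi$ be the fundamental vector field on $\bC^d$ generated by $\xi$, so that $T_p(N\cdot p) = \{X_\xi(p) : \xi \in \fn\}$, a subspace of dimension $d-n$ by freeness. The defining relation of the moment map expresses $d\langle \mu_N, \xi\rangle$ as $\omega_0(X_\xi,\,\cdot\,)$ for every $\xi \in \fn$, which shows that a vector $v \in T_p\bC^d$ lies in $T_p Z = \ker d\mu_N|_p$ if and only if $\omega_0(v, X_\xi(p)) = 0$ for all $\xi$; that is, $T_p Z = \bigl(T_p(N\cdot p)\bigr)^{\omega_0}$. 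Applying $(\,\cdot\,)^{\omega_0}$ again and using nondegeneracy of $\omega_0$ gives $(T_p Z)^{\omega_0} = T_p(N\cdot p)$. Since $\mu_N$ is $N$-invariant the orbit through $p$ lies in $Z$, so $T_p(N\cdot p) \subseteq T_p Z$; consequently the kernel of the restricted form $(\iota_a^*\omega_0)_p$, namely $T_p Z \cap (T_p Z)^{\omega_0}$, is exactly $T_p(N\cdot p) = \ker(d\pi_a)_p$.

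Granting this, I would define $\omega_a$ on $M$ by $(\omega_a)_{\pi_a(p)}(\bar u, \bar v) := (\omega_0)_p(u,v)$ for any lifts $u, v \in T_p Z$ of $\bar u, \bar v$. Independence of the lift at a fixed $p$ follows because changing $u$ by an orbit-tangent vector changes $(\omega_0)_p(u,v)$ by the pairing of that vector against $v \in T_p Z = (T_p(N\cdot p))^{\omega_0}$, which vanishes; independence of the representative $p$ in a fibre of $\pi_a$ follows because $\rho_\bC(N) \subset \bT^d$ acts by symplectomorphisms of $(\bC^d,\omega_0)$ and this action commutes with $\pi_a$. Smoothness of $\omega_a$ is seen by composing with a local section of $\pi_a$. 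By construction $\pi_a^*\omega_a = \iota_a^*\omega_0$; since $\pi_a$ is a surjective submersion, $\pi_a^*$ is injective on differential forms, so $\pi_a^*(d\omega_a) = \iota_a^*(d\omega_0) = 0$ forces $d\omega_a = 0$, and the same injectivity forces $\omega_a$ to be the unique form with $\pi_a^*\omega_a = \iota_a^*\omega_0$. Nondegeneracy of $\omega_a$ is precisely the content of the previous paragraph: the kernel of $(\iota_a^*\omega_0)_p$ equals $\ker(d\pi_a)_p$, so the induced bilinear form on $T_{\pi_a(p)}M \cong T_p Z/T_p(N\cdot p)$ has trivial kernel. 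I expect the main obstacle to be step (3) --- the identity $\ker d\mu_N|_p = (T_p(N\cdot p))^{\omega_0}$ and its corollary identifying $\ker(\iota_a^*\omega_0)_p$ with the orbit directions; the remaining points (the quotient structure, closedness, uniqueness) are formal once one also invokes the properness of the $N$-action, which holds here only because $N$ is compact.
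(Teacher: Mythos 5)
The paper does not prove Theorem~\ref{thm: MW quotient}; it simply states the classical Marsden--Weinstein--Meyer reduction theorem with citations to \cite{MW74} and \cite{Meyer}. So there is no ``paper's proof'' to compare against, and the right question is whether your sketch is a correct rendition of the standard argument. It is. The four-step skeleton (regular value theorem; quotient manifold theorem using properness from compactness of $N$ plus the freeness hypothesis; the pointwise identity $\ker(d\mu_N)_p = \bigl(T_p(N\cdot p)\bigr)^{\omega_0}$ coming from $d\langle\mu_N,\xi\rangle = \iota_{X_\xi}\omega_0$; descent of $\iota_a^*\omega_0$ through $\pi_a$ with well-definedness, nondegeneracy, closedness, and uniqueness all read off from injectivity of $\pi_a^*$ and the kernel computation) is exactly the textbook proof, and each step you outlined is correct, including the dimension count $\dim_\bR M = (d+n)-(d-n)=2n$ and the observation that $\ker(\iota_a^*\omega_0)_p = T_pZ \cap (T_pZ)^{\omega_0} = T_p(N\cdot p) = \ker(d\pi_a)_p$, which simultaneously gives well-definedness and nondegeneracy of $\omega_a$. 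The only thing I would tighten, purely as exposition, is the phrase ``this action commutes with $\pi_a$'': what you actually use is that $\pi_a\circ n = \pi_a$ for $n\in N$, so $(dn)_p u$ is a valid lift at $n\cdot p$, and then $N$-invariance of $\omega_0$ (since $\rho(N)\subset\bT^d$ acts by unitary, hence symplectic, transformations) gives independence of the base point.
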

\noindent In Section \ref{section: alpha}, we will discuss that $M$ carries an effective Hamiltonian $\bT^n$-action, which makes it a toric symplectic manifold.  Since $(\bC^d, \omega_0)$ is K\"{a}hler, compatible with the standard complex structure, the symplectic reduction $\mu^{-1}_N(a)/N$ also has a natural K\"ahler structure.  We will discuss the K\"ahler structure in more detail in Section \ref{sec: kahler}.

This symplectic quotient $M=\mu^{-1}_N(a)/N$ can also be described as a complex geometric quotient by the following theorem.
\begin{theorem}[Kempf-Ness \cite{KempfNess}, Kirwan \cite{KirwanBook}, Audin {\cite{AudinBook}}, Guillemin {\cite{GuilleminBook}}] \label{thm: GIT} There is a Zariski open set $U_a\subset \bC^d$  such that $\mu_N^{-1}(a)\subset U_a$, $N_\bC$ acts freely on $U_a$, and 
\[M=\mu^{-1}_N (a)/N=U_a/N_\bC.\]
The quotient $M$ is a smooth manifold carrying the canonical symplectic structure $\omega_a$ as in Theorem \ref{thm: MW quotient} and it carries the unique complex structure such that $\widetilde \pi_a: U_a\to U_a/N_\bC$ is holomorphic.  
\end{theorem}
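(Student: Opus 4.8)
The plan is to construct $U_a$ explicitly and to prove, by a Kempf--Ness style convexity argument, that the level set $\mu_N^{-1}(a)$ is a global slice for the $N_\bC$-action on $U_a$; the smooth, symplectic, and complex structures then all follow formally from the resulting product description. Concretely, I would set $U_a:=N_\bC\cdot\mu_N^{-1}(a)$, the union of the $N_\bC$-orbits that meet the level set. Since $N_\bC=(\bC^*)^{d-n}$ is abelian with maximal compact $N=U(1)^{d-n}$, the polar decomposition reads $N_\bC=N\cdot\exp(\fn)$, where $\fn:=\Lie(N)\cong\bR^{d-n}$, $\exp(\fn)\cong(\bR_{>0})^{d-n}$, and $\exp(\fn)\cap N=\{1\}$. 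As $N$ preserves $\mu_N^{-1}(a)$, this reduces the study of $U_a$ to the map
$$\Phi\colon\fn\times\mu_N^{-1}(a)\longrightarrow\bC^d,\qquad \Phi(\xi,z)=\rho_\bC(\exp\xi)\cdot z,$$
whose image is exactly $U_a$, where $\rho_\bC(\exp\xi)=(e^{(Q\xi)_1},\dots,e^{(Q\xi)_d})$ and $(Q\xi)_k=\sum_{\ell}Q^\ell_k\xi_\ell$.

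The heart of the argument is a convexity lemma. For $z=(z_1,\dots,z_d)\in\bC^d$ I would introduce the Kempf--Ness function
$$h_z\colon\fn\to\bR,\qquad h_z(\xi)=\frac14\sum_{k=1}^d e^{2(Q\xi)_k}|z_k|^2-\langle a,\xi\rangle,$$
with $\langle\cdot,\cdot\rangle$ the pairing of $\bR^{d-n}\cong\fn^*$ with $\fn$. Using the formula \eqref{eq: muN} for $\mu_N$, a direct computation gives $\nabla h_z(\xi)=\mu_N(\rho_\bC(\exp\xi)\cdot z)-a$ and $\mathrm{Hess}\,h_z(\xi)=\sum_{k=1}^d e^{2(Q\xi)_k}|z_k|^2\,Q_kQ_k^{T}$, where $Q_k\in\bR^{d-n}$ is the $k$-th row of $Q$. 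Hence $h_z$ is convex, its critical points are precisely the $\xi$ with $\rho_\bC(\exp\xi)\cdot z\in\mu_N^{-1}(a)$, and it is strictly convex exactly when $\{Q_k:z_k\neq0\}$ spans $\bR^{d-n}$ --- a condition independent of $\xi$ which holds at every point of $\mu_N^{-1}(a)$, since $a$ is a regular value on which $N$ acts freely (this is the linear independence statement recalled just before Theorem~\ref{thm: MW quotient}). Therefore, if $h_z$ has a critical point then $h_z$ is strictly convex everywhere, so that critical point is unique; and examining the growth of $h_z$ along rays $t\mapsto h_z(t\xi)$ shows $z\in U_a$ if and only if $h_z$ is proper, if and only if $a$ lies in the relative interior of $\mathrm{cone}\{Q_k:z_k\neq0\}$. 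Translating back: each $N_\bC$-orbit meeting $\mu_N^{-1}(a)$ meets it in a single $N$-orbit; the $N_\bC$-stabilizer of a point of $\mu_N^{-1}(a)$ equals its (trivial) $N$-stabilizer, so $N_\bC$ acts freely on $U_a$; and $\Phi$ is a continuous bijection. Finally, for $z\in\mu_N^{-1}(a)$ and $\xi\neq0$ one has $\frac{d}{dt}\big|_{0}\mu_N(\rho_\bC(\exp t\xi)\cdot z)=\mathrm{Hess}\,h_z(0)\,\xi\neq0$, so the $\exp(\fn)$-orbit directions are transverse to $\mu_N^{-1}(a)$; by $N_\bC$-equivariance and a dimension count, $d\Phi$ is everywhere an isomorphism, hence $\Phi$ is a diffeomorphism.

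With the slice in hand the rest is formal. By abelianness of $N_\bC$, $\Phi$ intertwines the $N_\bC$-action on $U_a$ with the product action of $\fn\times N\cong N_\bC$ on $\fn\times\mu_N^{-1}(a)$ (translation on $\fn$ together with the given $N$-action on the second factor), which is proper --- a product of a free translation action and an action of a compact group --- so $N_\bC$ acts properly on $U_a$ and $U_a/N_\bC\cong(\fn\times\mu_N^{-1}(a))/(\fn\times N)=\mu_N^{-1}(a)/N=M$, compatibly with the Marsden--Weinstein smooth and symplectic structures of Theorem~\ref{thm: MW quotient}. Since $N_\bC$ acts holomorphically, freely, and properly on the complex manifold $U_a\subset\bC^d$, the quotient $U_a/N_\bC$ carries a unique complex structure making $\widetilde\pi_a$ holomorphic, exactly as for the geometric quotients of Section~\ref{sec: cx quotient}. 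For Zariski openness, note that membership in $U_a$ depends only on the support $S=\{k:z_k\neq0\}$ via the cone condition above; hence $U_a=\bigcup_{S\in\mathcal S_a}\{z:z_k\neq0\text{ for all }k\in S\}$ for a finite family $\mathcal S_a$ of subsets of $\{1,\dots,d\}$, a union of complements of coordinate subspaces and so Zariski open, with $\mathcal S_a$ changing when $a$ crosses a wall --- the chamber dependence noted before the theorem.

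The main obstacle is the convexity lemma together with the properness/stability dictionary: verifying that $h_z$ is convex with gradient the shifted moment map, strictly convex along $\mu_N^{-1}(a)$, and proper exactly under the combinatorial cone condition on the support of $z$. Everything downstream --- the bijection $M\cong U_a/N_\bC$, its smoothness, and the complex structure --- then follows formally from the slice picture and the abelianness of $N_\bC$, along the lines of the cited work of Kempf--Ness, Kirwan, Audin, and Guillemin.
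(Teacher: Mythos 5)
The paper explicitly declines to prove this theorem, deferring to the cited works of Kempf--Ness, Kirwan, Audin, and Guillemin, and only states (without derivation) the combinatorial description of $U_a$ in Section~\ref{subsubsec: Ua}. Your proof therefore supplies what the paper omits, and it is a correct, self-contained Kempf--Ness argument of exactly the type the paper attributes to Guillemin's ``more elementary proof in the case of toric manifolds.'' The pieces check out: the polar decomposition $N_\bC=N\cdot\exp(\fn)$ with $N$ preserving the level set, the formula $\nabla h_z(\xi)=\mu_N(\rho_\bC(\exp\xi)\cdot z)-a$, the Hessian $\sum_k e^{2(Q\xi)_k}|z_k|^2\,Q_kQ_k^T$ and its strict positivity exactly when $\{Q_k:z_k\neq0\}$ spans, the deduction that a critical point is unique and that each $N_\bC$-orbit meeting $\mu_N^{-1}(a)$ meets it in a single $N$-orbit, the freeness of the $N_\bC$-action, the slice map $\Phi$ being a diffeomorphism via $\big(d\mu_N\big)_z\circ d\Phi=\mathrm{Hess}\,h_z$, the resulting identification $U_a/N_\bC\cong\mu_N^{-1}(a)/N$ with its quotient smooth, symplectic and complex structures, and the Zariski openness via the fact that membership in $U_a$ depends only on the support of $z$. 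Your cone-based description of $U_a$ is the same set the paper describes in Section~\ref{subsubsec: Ua} as $\bigcup_f\bC^d_{J_f}$ parametrized by faces of $\Delta$; yours has the advantage of not presupposing the convexity of $\Delta$.

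One small but real inaccuracy: the stability criterion should read ``$a$ lies in the \emph{interior} of $\mathrm{cone}\{Q_k:z_k\neq0\}$,'' not the relative interior. With ``relative interior,'' a support whose rows $\{Q_k\}$ fail to span $\bR^{d-n}$ could satisfy the criterion; but for such $z$ the function $h_z$ is constant along directions orthogonal to $\mathrm{span}\{Q_k:z_k\neq0\}$ and hence not proper, consistently with the Hessian being degenerate. The correct equivalence is $h_z$ proper $\iff$ for all $\eta\in C^*\setminus\{0\}$ one has $\langle a,\eta\rangle<0$ $\iff a\in\mathrm{int}(C)$, with $C=\mathrm{cone}\{Q_k:z_k\neq0\}$ a polyhedral cone; the interior is empty precisely when the cone is not full-dimensional. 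Since, as you observe, $z\in U_a$ already forces the spanning condition, this slip does not propagate into the rest of the argument, but the ``if and only if'' as written is false for lower-dimensional cones and should be corrected.
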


\noindent  Here we decided to simply use the notation $M$ instead of $M_a$ since whenever we use $M$ later on, it's always for a fixed $a$, so there's no confusion.  The Kempf-Ness theorem \cite{KempfNess} is a deep and more general theorem describing the equivalence between the geometric invariant theory (GIT) quotient of a smooth complex projective variety $X$ by the linear action of a complex reductive group $G_c$ and the symplectic quotient of $X$ by the maximal compact subgroup $G$ of $G_c$.  We will not describe the geometric invariant theory in this article.  Audin {\cite[Proposition 3.1.1]{AudinBook}} gave a proof of the above result in the specialized setting for toric manifolds that is similar to Kirwan's {\cite[Theorem 7.4]{KirwanBook}} more general proof for quotients of K\"ahler manifolds.  Guillemin {\cite[Section A1.2]{GuilleminBook}} gave a more elementary proof in the case of toric manifolds.  

We do not explain the proof for Theorem \ref{thm: GIT} in this article; however, in Section \ref{subsubsec: Ua} we will provide an explicit description of $U_a$ that can be found in \cite{AudinBook}, \cite{GuilleminBook}, corresponding to $\mu^{-1}_N(a)/N$, in this setting of toric manifolds.  This theorem is also illustrated in our Example \ref{ex: O(-1)+O(-1)}, Example \ref{ex: Pn}, and Example \ref{ex: KPn_setup}.  Note that the proof of this theorem depends on the convexity property of the moment map of the $\bT^n$-action on the symplectic toric manifold $\mu^{-1}_N(a)/N$, which is guaranteed for compact toric manifolds by Theorem \ref{thm: delzant} but not guaranteed in general for noncompact toric manifolds.  However, for all of our noncompact examples, the image of the moment map is convex.

  \begin{example}[$\Tot(\cO(-1)\oplus \cO(-1)\to \bP^1)$]\label{ex: O(-1)+O(-1)}  Consider $N_\bC=\bC^*$ acting on $\bC^4$ by 
\[\lambda_1 \cdot (z_1, z_2, z_3, z_4)=(\lambda_1 z_1, \lambda_1 z_2, \lambda_1^{-1}z_3, \lambda_1^{-1}z_4).
\]
It restricts to a Hamiltonian $N=U(1)$-action on $(\bC^4, \omega_0)$ with the moment map $\mu_N:\bC^4\to \bR$ given by 
$
\mu_N(z_1,z_2,z_3,z_4)=\frac{1}{2}(|z_1|^2+|z_2|^2-|z_3|^2-|z_4|^2).
$
Then $a\in \bR$ is a regular value if and only if $a\neq 0$, and the level set is 
\[\mu^{-1}_N(a)=\{|z_1|^2+|z_2|^2-|z_3|^2-|z_4|^2=2a\}\subset \bC^4.\]
We can see that for $z\in \mu_N^{-1}(a)$, if  $a>0$, then $z_1$ and $z_2$ cannot both be 0, and if  $a<0$, $z_3$ and $z_4$ cannot both be 0.  If we take $U_a$ to be 
\[
U_a=\begin{cases} 
(\bC^2 - \{0\})\times \bC^2, & a>0;\\
\bC^2\times (\bC^2-\{0\}), & a<0,
\end{cases}
\]
then $\mu^{-1}_N(a) \subset U_a$ and
\[
\mu^{-1}_{N}(a)/U(1)=U_a/\bC^*,
\]
giving the total space of $\cO(-1)\oplus \cO(-1)\to \bP^1$.  For $a>0$, the zero section is given by 
\[
\{(z_1, z_2, 0 ,0 ) \mid |z_1|^2+|z_2|^2=2a\}/U(1)=S^3(\sqrt{2a})/U(1)=S^2(\sqrt{a/2})\cong \bP^1.
\]
Similarly, for $a<0$, the zero section is given by
\[
\{(0, 0, z_3 , z_4 ) \mid -|z_3|^2-|z_4|^2=2a\}/U(1)=S^3(\sqrt{-2a})/U(1)=S^2(\sqrt{-a/2})\cong \bP^1.
\]
\qed
\end{example}

Given the identification of orbit spaces $U_a/N_\bC=\mu^{-1}_N(a)/N$, for any $z\in U_a$, it is in one of the $N_\bC$-orbits, which corresponds to a unique $N$-orbit in $\mu^{-1}_N(a)\subset U_a$, so there exists $\widetilde\lambda_a(z)\in N_\bC$ such that $\rho_\bC(\widetilde \lambda_a(z))\cdot z$ is in that $N$-orbit. Furthermore, because the $N_\bC$-action is free, there is a unique $\lambda_a(z)\in N_\bC/N=(\bR_{>0})^{d-n}$ such that $\rho_\bC(\lambda_a(z))\cdot z\in \mu^{-1}_N(a)$.  Therefore, we have the following deformation retraction.
\begin{definition}[Deformation retraction $R_a$] \label{def:def_retract} For any $z\in U_a$, take the unique $\lambda_a(z) \in N_\bC/N =  (\bR_{>0})^{d-n}$  such that $\rho_\bC(\lambda_a(z)) \cdot z \in \mu_N^{-1}(a).$ This defines a deformation retraction
\begin{equation}\label{eq:def_retract}
R_a: {U_a} \rightarrow \mu_N^{-1}(a) \subseteq \bC^d, \quad z\mapsto \lambda_a(z)\cdot z=\rho_\bC(\lambda_a(z))\cdot z.
\end{equation}
We will see in the example provided in Section \ref{subsec: KP1} that computing this deformation retraction is the main challenge one encounters when trying to express  the moment map of the $\bT^n$-action on $M$ in terms of the homogeneous coordinates $z\in \bC^d$.

\end{definition}

\begin{example}[$\bP^n$]\label{ex: Pn}  One can see from Equation \eqref{eq:facets_mom_map} that $a\in \bR$ is a regular value of $\mu_N$ if and only if $a\neq 0$, and the level sets are 
\[\mu^{-1}_N(a)=\begin{cases}
\{z\in \bC^{n+1} \mid \sum_{k=1}^{n+1}|z_k|^2=2a\}=S^{2n-1}(\sqrt{2a}), & a>0;\\
\emptyset, & a<0.
\end{cases}
\]
So if $z\in \mu^{-1}_N(a)\subset \bC^{n+1}$ and $a>0$, then $z\neq 0$.  We see that if we take $U_a=\bC^{n+1}-\{0\}$, then 
$$
\mu_N^{-1}(a)/U(1) = U_a/\bC^*=(\bC^{n+1}-\{0\})/\bC^* =\bP^n,
$$
where $\mu^{-1}_N(a)$ inherits a symplectic structure while $(\bC^{n+1}-\{0\})/\bC^*$ inherits a complex structure. 

For any $z\in \bC^{n+1}-\{0\}$, by Equation  \eqref{eq:facets_mom_map},  the unique $\lambda_a(z) \in \bC^*/U(1) \simeq \bR_{>0}$ such that $\lambda_a(z)\cdot z\in \mu_N^{-1}(a)$ is   $\lambda_a(z_1,\ldots,z_{n+1})=\sqrt{ \frac{2a}{\sum_{j=1}^{n+1}|z_j|^2}} 
$.  Hence, with  $\rho_\bC$ given in Equation  \eqref{eq: rho Pn}, the deformation retraction is given by
\begin{equation}\label{eq: Pn Ra}
R_a: \bC^{n+1}-\{0\} \rightarrow \mu_N^{-1}(a), \qquad (z_1,\ldots,z_{n+1}) \mapsto   \sqrt{ \frac{2a}{\sum_{j=1}^{n+1} |z_j|^2}} (z_1,\ldots,z_{n+1}).
\end{equation}
 \qed

\end{example}

\subsection{Canonical line bundle $K_M$ of a toric manifold $M$} \label{subsec: KM} This is a noncompact example, though as mentioned in the introduction section, the theory for compact toric manifolds carries over to this example.  
Let $M$ be a compact toric manifold obtained as a symplectic quotient of $(\bC^d, \omega_0)$ with respect to the action of $N$ as in the previous Section \ref{subsec:compact_red}.  In this subsection, we give a description of the total space of the canonical line bundle $K_M$ over $M$ as a symplectic reduction of
\begin{equation}\label{def:C_d+1}
    \left(\bC^{d+1}, \  \omega_0^+ =\frac{i}{2} \Big(\sum_{k=1}^d dz_k \wedge d\bar{z}_k + dp\wedge d\bar{p}\Big)\right),
\end{equation}
also with respect to an action of $N$.  So $K_M$ is a toric manifold of complex dimension $n+1$, which is one complex dimension higher than $M$.   We use $(z_1,\ldots z_d, p)$ to denote the coordinates on $\bC^{d+1}=\bC^d\times \bC$, where $z\in \bC^d$ are the homogeneous coordinates on $M$ as in the previous section, and $p$ is the additional coordinate in the fiber direction.  Notation-wise in this subsection, we add a superscript $+$ to maps from the previous section to denote the extensions of those maps. 

First we describe the complex geometric quotient. Let $\bT^{d+1}_\bC = (\bC^*)^{d+1}$ act on $\bC^{d+1}$ by 
\[
\begin{array}{ccl}
\bT^{d+1}_\bC \times (\bC^d \times \bC )&\to& \bC^d\times \bC\\
 (\tit^+, z,p)& \mapsto& \tit^+ \cdot (z,p)  := (\tit_1 z_1,\ldots,\tit_d z_d, \tit_{d+1}p).
 \end{array}
\]
Again the $\bT^{d+1}_\bC$-action on $\bC^{d+1}$ restricts to a Hamiltonian $U(1)^{d+1}$-action on the symplectic manifold $(\bC^{d+1}, \omega_0^+)$ with a moment map (which is unique up to addition of a constant vector in $\bR^{d+1}$) 
$$
\mu_{\bT^{d+1}} :\bC^{d+1}\to \bR^{d+1}, \quad \mu_{\bT^{d+1}}(z_1,\ldots,z_d, p) =\frac{1}{2}(|z_1|^2,\ldots, |z_d|^2, |p|^2).
$$

The $\rho_\bC$ chosen above for $M$ in Equation (\ref{eq: rho_action}) determines that for $K_M$, and the extended map $\rho^+_\bC:  N_\bC=(\bC^*)^{d-n}  \longrightarrow \bT^{d+1}_\bC$ needs to be
\begin{equation}
\rho^+_\bC (\lambda_1,\ldots,\lambda_{d-n}) = \left(\prod_{\ell=1}^{d-n} \lambda_\ell^{Q^\ell_1},\ldots, \prod_{\ell=1}^{d-n} \lambda_\ell^{Q^\ell_d},
\prod_{\ell=1}^{d-n} \lambda_\ell^{-\sum_{k=1}^d Q^\ell_k} \right).
\end{equation}
In Section \ref{sec: KM justification}, we will provide the justification that this extension $\rho_\bC^+$ determines a $N$-action on $\bC^{d+1}$ such that the symplectic reduction $\bC^{d+1}/\!\!/N$ is $K_M$, but for now we take this fact for granted.

For this given $\rho^+_\bC$, the map $\beta^+_\bC: \bT^{d+1}_\bC \to \bT^{n+1}_\bC$  is of the form
$$
\beta^+_\bC(\tit_1,\ldots, \tit_{d+1}) = \left(\prod_{k=1}^d \tit_k ^{v^k_1}   ,\ldots, \prod_{k=1}^d \tit_k^{v^k_n} , \prod_{k=1}^{d+1} \tit_k \right)
$$ 

and the corresponding matrices $Q^+$ and $B^+$ are given by
\begin{equation}\label{eq: Q+ and B+}
Q^+ = \left[ \begin{array}{ccc}  Q^1_1 & \cdots & Q^{d-n}_1  \\
\vdots &   & \vdots\\
Q^1_d & \cdots & Q^{d-n}_d \\
-\sum_{k=1}^d Q^1_k &\cdots & -\sum_{k=1}^d Q^{d-n}_k 
 \end{array}\right], \quad
 B^+= \left[\begin{array}{cccc} v^1_1 & \cdots & v^d_1 & 0 \\
\vdots &  &    & \vdots\\
v^1_n & \cdots & v^d_n & 0 \\
1 & \cdots & 1 & 1
 \end{array}\right].
 \end{equation}
Again by exactness of the analogue of Equation (\ref{eqn:NTT}), $0\to \fn_\bC  \stackrel{ (d \rho^+_\bC)_1 }{\longrightarrow}\ft^{d+1}_\bC \stackrel{ (d\beta^+_\bC)_1}{\longrightarrow}\ft^{n+1}_\bC \to 0$, we know that $B^+Q^+=0$.
\begin{example}[$K_{\bP^2}$]\label{ex: KP2 QB}
The $Q$ and $B$ for $\bP^2$ from Example \ref{ex: cx Pn} extend to $Q^+$ and $B^+$ for $K_{\bP^2}$ as
$$
Q^+= \left[ \begin{array}{r} 1 \\  1\\  1 \\ -3 \end{array}\right], \quad B^+ = \left[ \begin{array}{rrrr} 1 & 0 & -1 & 0 \\ 0 & 1 & -1 & 0\\ 1 & 1 & 1 & 1 \end{array}\right],
$$ 
where the corresponding maps are 
$$\rho^+_\bC (\lambda_1) =(\lambda_1,\lambda_1,\lambda_1,\lambda_1^{-3}), \quad \beta^+_\bC(\tit_1,\tit_2,\tit_3,\tit_{4})=(\tit_1\tit_3^{-1},\tit_2\tit_3^{-1},\tit_1\tit_2\tit_3\tit_4).$$ 
\qed
\end{example}

Now we describe the symplectic quotient. Let $N_\bC\cong (\bC^*)^{d-n}$ act on $\bC^{d+1}$ by $\lambda\cdot (z,p) = \rho^+_\bC(\lambda)\cdot (z,p)$, where $\lambda\in N_\bC$, $z\in \bC^d$, and $p\in \bC$. The restriction 
of this action to the maximal compact subgroup $N=U(1)^{d-n}$ is a Hamiltonian $N$-action on the standard $\bC^{d+1}$ of Equation (\ref{def:C_d+1}) with a moment map (unique up to addition of a constant vector in $\bR^{d-n}$) given by 
\begin{align}\label{eq: muN+}
\begin{split}
&\mu^+_N : \bC^{d+1} \to \bR^{d-n}\\
&\mu_N^+(z_1,\ldots,z_d,p) =\frac{1}{2} \left( \sum_{k=1}^d Q^1_k (|z_k|^2-|p|^2) ,\ldots,  \sum_{k=1}^d Q^{d-n}_k (|z_k|^2 -|p|^2) \right).
\end{split}
\end{align}

Let $a =(a_1,\ldots, a_{d-n}) \in \bR^{d-n}$ be a regular value of $\mu_N^+$. Then $(\mu^+_N)^{-1}(a)$ is a real submanifold of $\bC^{d+1}=\bR^{2d+2}$ of dimension $d+n+2$ preserved by the $N$-action.  The total space $K_M$ of the canonical line bundle over $M$ is the symplectic quotient
\begin{equation}\label{eq: Km}
K_M=(\mu^+_N)^{-1}(a)/N=(U_a\times \bC)/N_\bC.
\end{equation}
Note that $U_a^+=U_a\times\bC$ because $M$ sits in $K_M$ as the zero section as discussed in Remark \ref{subsec:connxn_M_KM} below. Like Equation \eqref{eq: define alpha}, there is again a deformation retraction to go from $U_a \times \bC$ to $(\mu_N^{+})^{-1}(a)$; for any $(z,p) \in {U}_a\times\bC $, there is a unique $\lambda_a(z,p) \in (\bR_{>0})^{d-n} \subset (\bC^*)^{d-n}=N_{\bC}$ such that
$$
\lambda_a(z,p) \cdot (z,p) \in (\mu^+_N)^{-1}(a),
$$
from which we obtain the deformation retraction
$$
R^+_a: {U}_a \times \bC \rightarrow (\mu^+_N)^{-1}(a), \quad (z,p) \mapsto \lambda_a(z,p)\cdot (z,p)  .
$$ Then $K_M$ carries a canonical symplectic form $\omega_a^+$ satisfying $(\pi_a^+)^*\omega_a^+ = (\iota_a^+)^*\omega_0$, where 
 \[
 \iota^+_a:  (\mu^+_N)^{-1}(a) \hookrightarrow \bC^{d+1}
 \] is the inclusion  and 
\begin{equation}\label{eq:projs_KM}
\begin{cases}
\pi^+_a: (\mu^+_N)^{-1}(a) \rightarrow K_M =  (\mu^+_N)^{-1}(a)/N, \\
\widetilde \pi^+_a: {U_a} \times \bC \rightarrow K_M = ({U_a}\times \bC)/N_\bC,
\end{cases}
\end{equation}
are natural projections to the quotient $K_M$. We now illustrate with some examples.

\begin{example}[$K_{\bP^n}$]\label{ex: KPn_setup} For $M=\bP^n$, from Example \ref{ex: KP2 QB}, we see that to get $K_{\bP^n}$, the $N_\bC=\bC^*$ action is 
\begin{equation}\label{eq: Nc action KPn}
\lambda_1 \cdot (z_1,\ldots, z_{n+1},p)  = (\lambda_1 z_1,\ldots, \lambda_1 z_{n+1}, \lambda_1^{-n-1}p)
\end{equation}
which restricts to a Hamiltonian $U(1)$-action with moment map \begin{equation}\label{eq:tmu+}
\mu_N^+(z_1,\ldots,z_{n+1},p)  =\frac{1}{2} \left(\sum_{k=1}^{n+1} |z_k|^2-(n+1)|p|^2\right),   
\end{equation}
that is unique up to a constant.  Then $a\in \bR$ is a regular value of $\mu^+_N$ if and only if $a\neq 0$.  From the description of the regular level sets
\begin{equation}\label{eq: level set KPa}
(\mu_N^+)^{-1}(a)=\left\{\sum_{k=1}^{n+1} |z_k|^2-(n+1)|p|^2=2a\right\},
\end{equation}
we see that if $(z,p) \in (\mu_N^+)^{-1}(a)\subset \bC^{n+2}$ and $a>0$, then $z\neq 0$, and if $a<0$, then $p\neq 0$.  Taking 
\begin{equation}\label{eq: Ua for Kpn}
U_a^+= \begin{cases}
(\bC^{n+1}-\{0\})\times \bC, & a>0;\\
 \bC^{n+1} \times (\bC-\{0\}), & a< 0,
 \end{cases}
\end{equation}
we see that
\begin{equation}\label{eq: formula for Kpn}
(\mu^+_N)^{-1}(a)/U(1) = \begin{cases} 
\left((\bC^{n+1}-\{0\})\times \bC\right)/\bC^* =K_{\bP^n}, & a>0;\\
\left( \bC^{n+1} \times (\bC-\{0\})\right)/\bC^* = \bC^{n+1}/\bZ_{n+1}, & a< 0.
\end{cases}
\end{equation} 
The justification for $\left((\bC^{n+1}-\{0\})\times \bC\right)/\bC^* =K_{\bP^n}$ is provided in Section \ref{sec: KM justification}.  Indeed, for $K_{\bP^n}$, $U_a^+=U_a\times \bC$.

The deformation retraction $R_a^+:U_a^+\to (\mu_N^+)^{-1}(a)$ is given by 
\begin{equation}\label{eq: retraction KPn}
    R^+_a(z_1,\ldots,z_{n+1},p) = \la_a(z,p)\cdot(z,p) = (\la_a(z,p)z, \la_a(z,p)^{-n-1}p)
\end{equation}
Computing $\la_a(z,p)$ will prove to be more challenging, which we explain in Section \ref{subsec: KP1}.  \qed
\end{example}

\begin{remark}[Connection between $M$ and $K_M$]\label{subsec:connxn_M_KM}

In short, the connection is that the $p=0$ level set is $M$. In more detail, let $a \in \bR^{d-n}$ be a regular value, and define
$$
h: (\mu_N^+)^{-1}(a)\to \bC , \quad h(z_1,\ldots,z_d,p) = p.
$$

Then from Equation \eqref{eq: muN+} for $\mu_N^+$ and Equation \eqref{eq: muN} for $\mu_N$, we get
\begin{eqnarray*}
h^{-1}(p)& =&\left\{(z_1,\ldots,z_d, p)\in\bC^d \times \{p\}\; \Big| \; \sum_{k=1}^d Q^\ell_k|z_k|^2 = 2a_\ell + \sum_{k=1}^d Q_k^\ell |p|^2  \right\}\\
&\cong& \mu_N^{-1}\left(a_1 +\frac{1}{2}\sum_{k=1}^d Q_k^1|p|^2,\ \ldots, \ a_{d-n} + \frac{1}{2} \sum_{k=1}^d Q_k^{d-n} |p|^2\right).
\end{eqnarray*}
When $p=0$, we see that $h^{-1}(p)=\mu_N^{-1}(a)$. Between $M$ and $K_M$ there exist inclusion maps
$$
\mu_{\textcolor{black}{N}}^{-1}(a) \times \{0\} \subset (\mu^+_N)^{-1}(a), \quad {U_a}\times \{0\}\subset  {U_a}\times \bC.
$$
which descend to the following inclusion as the zero section: 
$$
i_0: M = \mu_N^{-1}(a)/N  = {U_a}/N_{\bC}   \hookrightarrow 
K_M = (\mu^+_N)^{-1}(a)/N =  ({U_a}\times\bC)/N_{\bC},
$$
and $i_0^*\omega^+_a =\omega_a$.
\qed

\begin{example}[$K_{\bP^n}$] For the $n$-dimensional projective space,
$$
\begin{array}{ll}
h^{-1}(p)  &  =\left\{ (z,p) \in \bC^{n+1}\times \{p\} \mid |z_1|^2+\cdots+ |z_{n+1}|^2 = 2a +(n+1)|p|^2\right\}\\
& \cong \mu_N^{-1}\left(a + \frac{(n+1)|p|^2}{2}\right)= S^{2n-1}\left(\sqrt{2a+(n+1)|p|^2}\right)
\end{array}
$$
where $\mu_N$ is the moment map given by Equation \eqref{eq:facets_mom_map}. In particular, we can see that the level sets of $h$ are compact spheres.  
 \qed

\end{example}

\end{remark}

\section{Toric actions and moment maps} \label{sec: moment map}
We now discuss the toric geometry of $M$. That is, the geometry arising from the effective $\bT^d_\bC/N_\bC\cong \bT^n_\bC$-action on $M$.  

\subsection{Toric $\bT^n$-action on $M$ and its moment map}\label{section: alpha}  For clarity, we have broken this section into 4 smaller subsections.  Example \ref{ex: s_quot_mom_map} on  $\bP^n$ is provided towards the end to illustrate the theory discussed in this section. Since $\bT^n_\bC=\bT^d_\bC/N_\bC$ is a quotient, we start by describing orbits of the $\bT^d_\bC$-action on $\bC^d$.  

\subsubsection{Orbits of the $\bT^d_\bC$-action on $\bC^d$.} There is a one-to-one correspondence between orbits of $\bT^d_\bC$ in $\bC^d$ and multi-indices of the form
\begin{equation}\label{eq: multi-index}
J=\emptyset \text{ or } J=(j_1,\ldots, j_r), \quad 1\leq j_1< \cdots <  j_r\leq d. 
\end{equation}
 This is because each orbit of $\bT^d_\bC$ is of the form
\begin{equation} \label{eq: CJ}
\bC^d_J:=\{(z_1,\ldots, z_d)\mid z_j =0 \text{ iff. } j\in J\}.
\end{equation}
which has complex dimension $d-r$. For $z\in \bC^d_J$, the stabilizer of $z$ is the subtorus  
\begin{equation}\label{eq: stabilizer}
(\bT^d_\bC)_J:=\{(\tilde t_1,\ldots, \tilde t_d)\in \bT^d_\bC \mid \tilde t_j =1 \text{ iff. } j\in \{1,\ldots, d\} \backslash J\}. 
\end{equation}
And note  $\bC^d_\emptyset =(\bC^*)^d=\bT^d_\bC$ is the only orbit whose elements have trivial stabilizer. 

Later in this section we will see that for the symplectic toric manifold $M=U_a/N_\bC$, $U_a$ is a union of $\bC^d_J$ for a certain collection of $J$.

\subsubsection{Toric $\bT^n$-action on $M$.} In Section \ref{subsec:compact_red}, we obtained the symplectic toric manifold $M=\mu^{-1}_N(a)/N=U_a/N_\bC$.  The holomorphic $\bT^d_\bC$-action is effective on $U_a$, so there is an open embedding of $\bT^d_\bC\cong (\bC^*)^d$ in $U_a$, and it descends to an open embedding of $\bT^n_\bC=\bT^d_\bC/N_\bC\cong (\bC^*)^n$ in $M=U_a/N_\bC$.   As shown in the diagram
\begin{equation}\label{eq: alpha diagram}
\begin{tikzcd}[row sep=huge]
N_\bC \arrow[hookrightarrow, r, "\rho_\bC"]  & 
\bT^d_\bC\cong (\bC^*)^d \arrow[r, hookrightarrow] \arrow[d, shift right, "\beta_\bC" ']  &
 U_a \arrow[d, "\widetilde \pi_a"]  \\
& \bT^n_\bC=\bT_\bC^d/N_\bC\cong (\bC^*)^n   \arrow[u, hookrightarrow, shift right,  "\alpha_\bC"']  \arrow[r, hookrightarrow]&
M=U_a/N_\bC 
\end{tikzcd},
\end{equation}
this embedding of $\bT^n_\bC\hookrightarrow M=U_a/N_\bC$ can be factored through $\bT^d_\bC$ if we choose a group homomorphism 
\begin{equation}\label{eqn:alpha}
\alpha_\bC: \bT_\bC^n \to \bT_\bC^d
\end{equation}
that is the right inverse of the surjective group homomorphism $\beta_{\bC}:\bT_\bC^d\to \bT_\bC^n$, characterized by 
$\beta_{\bC} \circ \alpha_\bC(t) = t$ for all $t\in \bT_\bC^n$.   In particular, $\alpha_\bC$ is injective, as the right split of the exact sequence $0\to N_\bC\to \bT^d_\bC\to \bT^n_\bC\to 0$, and $\bT^d_\bC=\rho_\bC(N_\bC)\oplus \alpha_\bC(\bT^n_\bC)$.

More explicitly, $\alpha_\bC$ is of the form
\begin{equation} \label{eq: define alpha}
\alpha_{\bC}(t_1,\ldots,t_n) =\left(\prod_{m=1}^n t_m^{s_1^m},\ldots, \prod_{m=1}^n t_m^{s_d^m}\right)\in \bT^d_\bC 
\end{equation}
where $s_k^m \in \bZ$, $1\leq k\leq d$, $1\leq m\leq n$, and
\begin{equation}\label{eqn:vs}
\sum_{k=1}^d  v^k_m s_k^{m'}   ={\delta}_m^{m'},\quad 1\leq m,m'\leq n.
\end{equation} 
Taking the differential of $\alpha_\bC$ at identity, we obtain an injective linear map 
$$
\ft_\bC^n =\bC^n \stackrel{(d\alpha_\bC)_1}{\longrightarrow} \ft_\bC^d =\bC^d
$$
given by the matrix
\begin{equation}\label{eq: A}
A =\left[ \begin{array}{ccc} s^1_1 &\cdots & s^n_1\\ \vdots & & \vdots \\ s^1_d &\cdots & s^n_d \end{array}\right].
\end{equation}
Equation \eqref{eqn:vs} is equivalent to  $BA=\mathbb I_{n}$, where $\mathbb I_n$ is the $n\times n$ identity matrix. Applying $\Hom(  -,\bC^*)$ to \eqref{eqn:alpha}, we obtain a surjective
map
\begin{equation}\label{eqn:alphaz}
\alpha^*_\bZ: \Hom(\bT_{\bC}^d,\bC^*) \cong \bZ^d \to \Hom(\bT_{\bC}^n,\bC^*) \cong \bZ^n.
\end{equation}
Tensoring \eqref{eqn:alphaz} by $\bR$, we obtain a surjective linear map
$$
\alpha^*: \bR^d \to \bR^n. 
$$
Note that $\alpha^*$ thus defined is the same as the map $(d\alpha_\bC)_1^*$.

Given $\alpha_\bC$, there is a $\bT^n_\bC\cong \alpha_\bC(\bT_\bC^n)$-action on $U_a$ given by
\[
 \begin{array}{ccl}
 \bT^n_\bC\times U_a & \to & U_a\\
(t, z)& \mapsto & \alpha_\bC(t) \cdot z,
\end{array}
\]
where the action of $\alpha_\bC(t)\in \bT^d_\bC$ on $U_a\subset \bC^d$ is the usual component-wise multiplication. Because $\alpha_\bC(\bT^n_\bC)=\bT^d_\bC/\rho_\bC(N_\bC)$, this $\bT^n_\bC$-action on $U_a$ descends to a  $\bT^n_\bC=\bT^d_\bC/N_\bC$-action on $M=U_a/N_\bC$.  That is, for each $z\in U_a$, $t\in \bT^n_\bC$  sends $\widetilde\pi_a(z)\in M$ to $\widetilde\pi_a\left(\alpha_\bC(t)\cdot z\right)$.  This $\bT^n_\bC$-action on $M$ restricts to an effective Hamiltonian $\bT^n\cong U(1)^n$-action   on the symplectic manifold $(M, \omega_a)$, and we discuss its moment map below.

\subsubsection{Moment map of the $\bT^n$-action, moment polytope, and the action-angle coordinates.} By the above construction of the $\bT^n_\bC$-action on $M$ (as descending from a composition of $\alpha_\bC$ and the usual $\bT^d_\bC$-action on $\bC^d$), the moment map $\mu_a: M\to \bR^n$ for the Hamiltonian $\bT^n$-action on $M$, which is unique  to the addition of a constant vector in $\bR^n$, is given by
 \begin{equation}\label{eq: mu a}
\mu_a \circ \pi_a(z) =  \alpha^* \circ \mu_{\bT^d}(z),\quad z\in \mu_N^{-1}(a),
\end{equation}
hence 
\begin{equation}\label{eq: mu a pt2}
\mu_a \circ \widetilde{\pi}_a(z) = \alpha^*\circ \mu_{\bT^d}\circ R_a(z),\quad z\in {U_a}.
\end{equation}
where $R_a$ was the deformation retract defined in Definition \ref{def:def_retract}.  The theorem below provides some important properties about the moment map $\mu_a$.

\begin{theorem} [Atiyah \cite{atiyah} and Guillemin-Sternberg \cite{GS} convexity theorem for compact symplectic toric manifolds] \label{thm: convexity} If $(M^{2n},\omega_a)$ is a compact symplectic toric manifold carrying an effective  Hamiltonian $\bT^n$-action as above, with the moment map $\mu_a: M\to \Lie(\bT^n)^*=\bR^n$, then the level sets of $\mu_a$ are connected, and the image  ${\mathrm{\Delta}} :=\mu_a(M)$ is a convex polytope (know as the moment polytope of $M$) that is equal to the convex hull of the image under $\mu_a$ of the fixed points of the $\bT^n$-action.   
\end{theorem}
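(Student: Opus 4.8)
The plan is to sidestep the general Morse--Bott argument of Atiyah and Guillemin--Sternberg --- which shows that every component $\langle \mu_a,\xi\rangle$, $\xi\in\bR^n$, is a Morse--Bott function whose critical submanifolds all have even index and even coindex, and then extracts connectedness of fibres and convexity by an induction on the dimension of the torus --- and instead to exploit the explicit presentation of $M$ as a symplectic quotient from Section~\ref{subsec:compact_red}, which turns the statement into elementary convex geometry.

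First I would identify the moment polytope directly. Write $q:=\mu_{\bT^d}:\bC^d\to\bR^d_{\geq 0}$, $q(z)=\tfrac12(|z_1|^2,\dots,|z_d|^2)$, as in \eqref{eq: Cd moment map}. By \eqref{eq: muN} one has $\mu_N^{-1}(a)=q^{-1}(P_a)$, and $q$ maps $\mu_N^{-1}(a)$ \emph{onto} $P_a$ (given $x\in P_a$, the point $(\sqrt{2x_1},\dots,\sqrt{2x_d})$ lies in $\mu_N^{-1}(a)$), where
\[
P_a:=\Big\{\, x\in\bR^d : x_k\geq 0 \text{ for all }k,\ \textstyle\sum_{k=1}^d Q^\ell_k x_k=a_\ell \text{ for } 1\leq\ell\leq d-n \,\Big\}
\]
is the intersection of the positive orthant with an affine subspace, hence a convex polyhedron; since $q$ is proper onto $\bR^d_{\geq 0}$, it is bounded --- a genuine polytope --- exactly when $M$ (equivalently $\mu_N^{-1}(a)$, as $N$ is compact) is compact. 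Combining this with $\mu_a\circ\pi_a=\alpha^*\circ q$ from \eqref{eq: mu a} and the surjectivity of $\pi_a:\mu_N^{-1}(a)\to M$ gives
\[
{\mathrm{\Delta}}:=\mu_a(M)=\alpha^*\big(q(\mu_N^{-1}(a))\big)=\alpha^*(P_a),
\]
the linear image of a convex polytope, hence a convex polytope. In fact $\alpha^*=A^{T}$ restricted to the affine subspace $\{x:Q^{T}x=a\}$ --- a translate of $\ker Q^{T}$, which by $BQ=0$ (see \eqref{eqn:vQ}) is the span of the columns of $B^{T}$ --- is an affine \emph{isomorphism} onto $\bR^n$: for $u\in\bR^n$, $\alpha^*(B^{T}u)=A^{T}B^{T}u=(BA)^{T}u=u$, using $BA=\mathbb I_n$ from \eqref{eqn:vs}. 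So $\alpha^*$ carries $P_a$, vertices and faces and all, isomorphically onto ${\mathrm{\Delta}}$.

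Connectedness of the fibres of $\mu_a$ comes from the same picture. For $y\in\bR^n$, $\pi_a^{-1}(\mu_a^{-1}(y))=q^{-1}\big(P_a\cap(\alpha^*)^{-1}(y)\big)$, and $S:=P_a\cap(\alpha^*)^{-1}(y)$ is convex, hence path--connected. The remaining step is the elementary fact that $q^{-1}(S)\subseteq\bC^d$ is connected whenever $S\subseteq\bR^d_{\geq 0}$ is path--connected: every $z\in q^{-1}(S)$ is joined inside $q^{-1}(S)$ to the ``real'' point $(\sqrt{2x_1},\dots,\sqrt{2x_d})$ with $x=q(z)$ (moving within the connected torus $q^{-1}(\{x\})$), and any two such real points are joined by lifting a path $x(t)$ in $S$ coordinatewise via $t\mapsto\sqrt{2x_k(t)}$. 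Hence $\pi_a^{-1}(\mu_a^{-1}(y))$ is connected, and so is its continuous image $\mu_a^{-1}(y)=\pi_a\big(\pi_a^{-1}(\mu_a^{-1}(y))\big)$.

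Finally, to see that ${\mathrm{\Delta}}$ is the convex hull of the images of the $\bT^n$-fixed points, I would match fixed points with vertices of $P_a$. A point $[z]\in M=U_a/N_\bC$ is $\bT^n$-fixed iff its $N_\bC$-orbit equals its $\bT^d_\bC$-orbit $\bC^d_J$ from \eqref{eq: CJ}, where $J=\{j:z_j=0\}$; comparing complex dimensions $d-|J|$ and $\dim_\bC N_\bC=d-n$ (and using that $N_\bC$ acts freely on $U_a$) forces $|J|=n$. Under $q$ such an orbit maps to the point of $P_a$ cut out by the $n$ equations $x_j=0$ for $j\in J$ together with the $d-n$ affine equations --- exactly a vertex of $P_a$ ($P_a$ is simple by the freeness assumption) --- and conversely every vertex of $P_a$ arises this way. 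Applying the affine isomorphism $\alpha^*$ and using $P_a=\operatorname{conv}(\text{vertices of }P_a)$ gives ${\mathrm{\Delta}}=\alpha^*(P_a)=\operatorname{conv}\big(\alpha^*(\text{vertices of }P_a)\big)=\operatorname{conv}\big(\mu_a(\text{fixed points})\big)$, as claimed. The step needing the most care in this toric argument is the connectedness lemma for $q^{-1}(S)$ together with the surjectivity of $q|_{\mu_N^{-1}(a)}$ and the equivalence ``$M$ compact $\Leftrightarrow$ $P_a$ bounded''. If one instead wants the fully general, non-toric convexity theorem, the real obstacle is the inductive step in proving that all fibres of a torus moment map are connected: slicing by a codimension-one subtorus, where the slice can be singular and, for irrational slicing directions, a limiting/approximation argument is required.
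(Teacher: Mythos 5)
The paper does not prove this theorem --- it simply cites Atiyah and Guillemin--Sternberg --- so there is no in-paper argument to compare against. What you have done is supply a proof from scratch, and it is worth saying clearly that the route you take is genuinely different from the classical one: rather than the Morse--Bott analysis of the components $\langle\mu_a,\xi\rangle$ and the inductive argument on $\dim\bT^n$, you exploit the presentation $M=\mu_N^{-1}(a)/N$ and reduce everything to linear algebra and the elementary fact that $\mu_{\bT^d}=q$ has convex image on $\bC^d$. This specialization is valid: the theorem as stated in the paper carries the qualifier ``as above,'' which ties it to the reduction construction of Section~\ref{subsec:compact_red}, and for that setting your argument is essentially right and considerably more elementary. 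The trade-off is that it only covers toric manifolds arising by Delzant reduction; promoting it to the full Atiyah/Guillemin--Sternberg theorem (for an arbitrary compact symplectic toric $M$) would require first knowing every such $M$ is of this form, which is Delzant's theorem, and the usual proof of \emph{that} invokes convexity --- so as a proof of the general statement this would be circular, but as a proof in the context the paper is actually working in it is fine.

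Two local remarks. First, your connectedness argument is more elaborate than it needs to be: because $\alpha^*$ restricted to the affine slice $H_a=\{x:Q^Tx=a\}$ is an affine \emph{isomorphism} onto $\bR^n$ (which you yourself establish), the set $S=P_a\cap(\alpha^*)^{-1}(y)$ is a single point (or empty), not a general convex set. So $\pi_a^{-1}(\mu_a^{-1}(y))=q^{-1}(\text{point})$ is a (possibly degenerate) torus, and connectedness is immediate --- the path-lifting lemma for $q^{-1}(S)$ with $S$ path-connected, while correct, is not needed here. Second, the correspondence between $\bT^n$-fixed points and vertices of $P_a$ is stated a bit loosely. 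Not every $J$ with $|J|=n$ yields a vertex of $P_a$ (the $n$ constraints $x_j=0$, $j\in J$, together with $Q^Tx=a$ may fail to have a solution in $\bR^d_{\ge0}$), nor a point of $U_a$; the $J$ that work are precisely the vertex index sets $J_v$ from Section~\ref{subsubsec: Ua}. Your comparison of orbit dimensions ($\dim_\bC\bT^d_\bC\cdot z=d-|J|$ versus $\dim_\bC N_\bC\cdot z=d-n$ under freeness) correctly shows that a fixed point forces $|J|=n$, but for the converse you should also say that $\bC^d_J\subseteq U_a$ is a $\bT^d_\bC$-orbit on which the free $N_\bC$-action is open and closed, hence transitive --- that closes the loop between the combinatorics of $P_a$ and the $\bT^n$-fixed points. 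With those two tightenings the argument is complete.
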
 
\noindent Delzant further completes the description of the moment polytope ${\mathrm{\Delta}}$ in the following theorem.
\begin{theorem} [Delzant \cite{Delzant} classification theorem for compact symplectic toric manifolds] \label{thm: delzant}
Compact toric symplectic manifolds $(M^{2n}, \omega_a)$ up to $\bT^n$-equivariant symplectomorphism are in one-to-one correspondence with Delzant polytopes ${\mathrm{\Delta}}=\mu_a(M^{2n})\subset \bR^{n}$ up to translation.  Delzant polytopes are convex polytopes satisfying 
\begin{itemize} 
\item simple, i.e. each vertex has $n$ edges, 
\item rational,  i.e. the vectors normal to each facet are generated by a vector in $\bZ^n$, and
\item smooth, i.e. the $n$ integral normal vectors, one for each of the $n$ facets adjacent to a vertex, form a basis of $\bZ^n$.
\end{itemize}

\end{theorem}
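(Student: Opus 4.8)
The plan is to establish the correspondence in three pieces: (a)~\textbf{surjectivity}, that every Delzant polytope occurs as a moment polytope; (b)~conversely, that the moment polytope of a compact symplectic toric manifold is always Delzant; and (c)~\textbf{injectivity}, that the polytope recovers the manifold up to $\bT^n$-equivariant symplectomorphism.

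Piece (a) is essentially Sections~\ref{sec: symp_quot}--\ref{sec: moment map} read in reverse. Given a Delzant polytope presented as $\Delta=\{x\in\bR^n:\langle x,\nu_k\rangle\geq -\lambda_k,\ 1\leq k\leq d\}$ with $\nu_k\in\bZ^n$ the primitive inward normals to the $d$ facets, I would form the linear map $\bZ^d\to\bZ^n$, $e_k\mapsto\nu_k$; rationality makes this land in $\bZ^n$ and smoothness makes it surjective, so its kernel is a primitive sublattice of $\bZ^d$ cutting out a subtorus $N\subset\bT^d$ with $1\to N\to\bT^d\to\bT^n\to 1$ --- exactly the data of Section~\ref{sec: cx quotient} with $B=[\,\nu_1\,|\,\cdots\,|\,\nu_d\,]$. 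Performing the symplectic reduction of $(\bC^d,\omega_0)$ at the level $a$ read off from the constants $\lambda_k$, the Delzant hypotheses are precisely what forces $a$ to be a regular value of $\mu_N$ and $N$ to act freely on $\mu_N^{-1}(a)$, so Theorem~\ref{thm: MW quotient} yields a compact symplectic toric manifold $M_\Delta$ --- compact because $\Delta$ is bounded, which makes $\mu_N^{-1}(a)$ closed and bounded in $\bC^d$. It then remains to compute $\mu_a(M_\Delta)$ from \eqref{eq: mu a}, using the decomposition of $U_a$ into the $N_\bC$-orbits through the coordinate strata $\bC^d_J$, and to check that it equals $\Delta$.

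For piece (b), the essential input is the equivariant local normal form near a fixed point. An effective Hamiltonian $\bT^n$-action on a $2n$-dimensional symplectic manifold has isolated fixed points, and near each such point $p$ there is a $\bT^n$-equivariant symplectomorphism onto a neighborhood of the origin in $(\bC^n,\omega_{\mathrm{std}})$ on which $\bT^n$ acts linearly with weights $w_1^{(p)},\dots,w_n^{(p)}\in\bZ^n$; effectiveness forces these $n$ weights to form a $\bZ$-basis of the weight lattice, up to signs. In such a chart $\mu_a=\mu_a(p)+\tfrac12\sum_j|\zeta_j|^2\,w_j^{(p)}$, so near $\mu_a(p)$ the image of $\mu_a$ is the cone $\mu_a(p)+\sum_j\bR_{\geq 0}\,w_j^{(p)}$. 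This one computation shows simultaneously that $\mu_a(p)$ is a vertex with exactly $n$ incident edges (simplicity), that the edge directions --- hence the facet normals --- are rational (rationality), and that the $n$ inward normals at each vertex form a lattice basis (smoothness). That the whole image is convex and equals the convex hull of the images of the fixed points is Theorem~\ref{thm: convexity}.

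For piece (c), given compact symplectic toric $M_1,M_2$ with $\mu_1(M_1)=\mu_2(M_2)=\Delta$, I would build an equivariant symplectomorphism by a stratified Moser argument over $\Delta$. Over the open dense stratum $\mathring\Delta$, each $\mu_i^{-1}(\mathring\Delta)$ is a principal $\bT^n$-bundle, hence trivial since $\mathring\Delta$ is contractible, and there the symplectic form is pinned down by the moment-map condition up to a closed $2$-form pulled back from $\mathring\Delta$, which is exact by contractibility and is absorbed by a fiber-preserving diffeomorphism; this produces an equivariant symplectomorphism $\phi_0$ over $\mathring\Delta$. One then extends $\phi_0$ inductively over the lower strata of $\Delta$ (the faces of positive codimension): near each face the local normal form of (b) is canonically determined by the local combinatorics of $\Delta$, which is the same for $M_1$ and $M_2$, and an equivariant Moser isotopy reconciles $\phi_0$ with these model charts; equivalently, one shows directly that each $M_i$ is equivariantly symplectomorphic to the $M_\Delta$ of (a). I expect (c) to be the main obstacle: the inductive extension needs a parametrized equivariant relative Moser/Darboux theorem whose isotopies are controlled well enough to match on overlaps of neighborhoods of distinct faces, and assembling them compatibly is where essentially all the technical work sits; by contrast (a) merely organizes constructions already in this paper, and (b) follows quickly from the equivariant Darboux theorem together with Theorem~\ref{thm: convexity}.
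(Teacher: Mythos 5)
The paper does not prove this theorem: it is quoted as Delzant's classification result and attributed to \cite{Delzant}, so there is no in-paper proof to compare your attempt against. What follows is therefore a comparison against the standard proof in the literature (Delzant's original article, also exposed in Audin's and Cannas da Silva's books), which the paper is implicitly deferring to.

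Your pieces (a) and (b) follow the standard route and are essentially correct as sketches: (a) is precisely the Delzant construction that Sections~\ref{sec: symp_quot}--\ref{sec: moment map} of the paper run forward, and (b) is the equivariant local normal form at a fixed point combined with the Atiyah--Guillemin--Sternberg convexity theorem (Theorem~\ref{thm: convexity}), which indeed yields simplicity, rationality, and smoothness of $\Delta$ in one stroke. One remark on (b): the claim that effectiveness of the global $\bT^n$-action forces the $n$ isotropy weights at each fixed point to be a $\bZ$-basis of the weight lattice is true but not immediate --- it uses that the full isotropy group of a fixed point is $\bT^n$ itself, hence the weights must span the lattice, and smoothness of $M$ then forces them to be a basis; a careful write-up would spell this out.

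The genuine gap is in piece (c). You propose to first build an equivariant symplectomorphism $\phi_0$ over $\mu^{-1}(\mathring\Delta)$ and then ``extend $\phi_0$ inductively over the lower strata.'' This is backwards relative to what actually works, and the extension step is not merely technically hard --- it is not clear it can be made to work at all as stated. The diffeomorphism over the open dense stratum that you construct by trivializing the $\bT^n$-bundle and absorbing the closed $2$-form has no reason to extend continuously (let alone smoothly and symplectically) to $\mu^{-1}(\partial\Delta)$, where the torus orbits collapse; the trivialization itself degenerates there. The standard proof goes the other way: one covers $\Delta$ by neighborhoods of its faces (or vertices), uses the local normal form to produce an equivariant symplectomorphism to the model $M_\Delta$ over each such neighborhood, and then patches these local equivalences using an equivariant Moser argument together with a \v{C}ech-type obstruction computation, the obstruction vanishing because $\Delta$ is convex (hence contractible) and the relevant sheaf cohomology is trivial. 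Your sentence ``equivalently, one shows directly that each $M_i$ is equivariantly symplectomorphic to the $M_\Delta$ of (a)'' is the correct formulation, but the stratified-Moser-from-the-inside strategy preceding it would need to be replaced by this patch-from-the-boundary argument rather than merely supplemented by it.
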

\noindent In other words, the one-to-one correspondence in the above theorem says that if $(M_1, \omega_1, \bT^n, \mu_1)$  and $(M_2, \omega_2, \bT^n, \mu_2)$ are equivariantly symplectomorphic toric manifolds, then $\mu_1(M_1)$ and $\mu_2(M_2)$ differ by a translation.  However, for the same manifold,  if we change the basis of $\bT^n$, then that is equivalent to a $\mathrm{GL}(n,\bZ)$-action on $\Lie(\bT^n)^* \cong \bR^n$, which changes the shape of the polytope ${\mathrm{\Delta}}\subset \bR^n$; this is discussed more in Section \ref{sec: change Tn basis} and Example \ref{ex: P2}.

We now explain Delzant's observation that the primitive column vectors of $B$ (i.e. the matrix form of $(d\beta_\bC)_1$ given in Equation \eqref{eq: Q and B}) are inward pointing normal vectors to the facets of ${\mathrm{\Delta}}$. Let $\beta: \bR^d\to \bR^n$ be the restriction to $\bR$ of $(d\beta_\bC)_1: \bC^d\to \bC^n$.  
Applying $\Hom(-, \bR)$ to $\beta$ gives us the dual linear map $\beta^*: \bR^n\to \bR^d$.  From Equation (\ref{eq: mu a}) and the fact that $\alpha_\bC$ is a right inverse of $\beta_\bC$, \textcolor{black}{we can obtain defining equations of the moment polytope, as follows.}
 
As in Equation \eqref{eq: Cd moment map}, each $(\mu_{\bT^d})_j =\frac{1}{2}|z_j|^2+\kappa_j$ for some constant $\kappa_j\in \bR$. The relation between $\kappa$ and $a$ is $(d \rho_\bC)_1^*(\kappa)=-a$, we take the 0 level set of $(d\rho_\bC)_1^* \mu_{\bT^d} = (d\rho_\bC)_1^*\sum_{j=1}^d (\frac{1}{2}|z_j|^2e^j + \kappa_je^j) =(d\rho_\bC)_1^*(\sum_{j=1}^d\frac{1}{2}|z_j|^2e^j) - a = 0 = \mu_N -a$ where $e^j$ form the standard basis on $\bR^d$. Let $L:\bR^n\to \bR^d$ be 
\begin{equation}\label{eq: L}
 L(\xi)_j:= \beta^*(\xi)_j -\kappa_j  = \langle v^j, \xi\rangle - \kappa_j = \sum_{k=1}^n v^j_k \xi_k -\kappa_j,
 \end{equation}
 where $v^j$, $j=1,\ldots d$, are the column vectors of $B$ and $(\xi_1,\ldots, \xi_n) \in {\mathrm{\Delta}}=\mu_a(M)\subset \bR^n$ are the moment map coordinates $(\mu_{a,1},\ldots \mu_{a,n})$. 

 Recall we have the short exact sequence
 \begin{equation}\label{eq: linearized dual ses}
     0 \to \bR^n \xrightarrow{\beta^*} \bR^d \xrightarrow{(d\rho_\bC)_1^*} \mathfrak{n}^* \to 0.
 \end{equation}
So the kernel of $(d\rho_\bC)_1^*$ is precisely the image of $\beta^*$. 


By the definition of symplectic reduction, the kernel can be identified isomorphically with the values $\Re(z)=\Re(z_1,\ldots,z_d)\in \bR^d_{\geq 0}$ such that $\frac{1}{2}\Re(z_j)^2$ satisfy the affine linear equation $$0=(d\rho_\bC)_1^*\left(\frac{1}{2}\Re(z_1)^2 + \kappa_1, \ldots, \frac{1}{2}\Re(z_n)^2 + \kappa_n\right) =(d\rho_\bC)_1^* \circ \mu_{\bT^d}(\Re(z_1),\ldots, \Re(z_n)),
$$
hence $\Re(z) \in \mu_N^{-1}(a)$. In other words, there is a one-to-one correspondence with points $\xi \in \bR^n$:
\begin{equation}
    \beta^*(\xi) = \frac{1}{2}\Re(z)^2 + \kappa.
\end{equation}
Thus, for $\xi\in {\mathrm{\Delta}}$  we have that $L(\xi)_j =\frac{1}{2}|\Re(z_j)|^2\geq 0$, which leads to the following corollary.  (A similar way to think about this is if we restrict the domain of $\beta_\bC$ to the $(\bC^*)^n$ part of an affine coordinate chart such as those given in Equation \eqref{eq: affine chart}, then $\alpha_\bC$ is the inverse of $\beta_\bC$ on that chart.  Then  $\beta^*(\xi)=\beta^*\alpha^*\mu_{\bT^d}(z)=\mu_{\bT^d}(z)$.)    

\begin{corollary}\label{coro: polytope}
The moment polytope ${\mathrm{\Delta}}$ is given by 
\begin{equation}\label{eq: polytope facets}
{\mathrm{\Delta}} = \left\{\xi\in \bR^n\mid L(\xi)_j = \langle v^j ,\xi\rangle -\kappa_j \geq 0 \right\},
\end{equation}
where $v^j$, $j=1,\ldots d$, are the column vectors of $B$.  So each of the $d$ facets of ${\mathrm{\Delta}}$ is given by  
\begin{equation}\label{eq: facet}
\mathcal F_j = \{L(\xi)_j= \langle v^j ,\xi\rangle -\kappa_j =  0\}\cap {\mathrm{\Delta}}, 
\end{equation} 
with $v^j$ being a primitive inward pointing normal vector.  
\end{corollary}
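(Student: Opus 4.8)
The plan is to read the inequalities \eqref{eq: polytope facets} directly off the formula $\mu_a\circ\pi_a = \alpha^*\circ\mu_{\bT^d}$ from \eqref{eq: mu a}, combined with the identification of the level set $\mu_N^{-1}(a)$ inside $\bC^d$ supplied by the exact sequence \eqref{eq: linearized dual ses}. Normalizing the constant $\kappa$ so that $(d\rho_\bC)_1^*(\kappa) = -a$, as in the discussion preceding \eqref{eq: L}, the zero set of $(d\rho_\bC)_1^*\circ\mu_{\bT^d}$ equals $\mu_N^{-1}(a)$; and since $\ker (d\rho_\bC)_1^* = \beta^*(\bR^n)$ by exactness of \eqref{eq: linearized dual ses}, a point $z$ lies in $\mu_N^{-1}(a)$ iff $\mu_{\bT^d}(z) = \beta^*(\xi)$ for some (necessarily unique) $\xi\in\bR^n$. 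First I would take $p\in M$, lift it to $z\in\mu_N^{-1}(a)$, write $\mu_{\bT^d}(z) = \beta^*(\xi)$, and apply $\alpha^*$: using $\alpha^*\circ\beta^* = \mathbb{I}_n$ (the dual of $BA = \mathbb{I}_n$) and \eqref{eq: mu a}, this gives $\xi = \alpha^*\mu_{\bT^d}(z) = \mu_a(p)$. Comparing $j$-th coordinates, $\frac{1}{2}|z_j|^2 + \kappa_j = \beta^*(\xi)_j = \langle v^j,\xi\rangle$, whence $L(\mu_a(p))_j = \langle v^j,\mu_a(p)\rangle - \kappa_j = \frac{1}{2}|z_j|^2 \ge 0$. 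This yields the inclusion ${\mathrm{\Delta}} = \mu_a(M)\subseteq\{\xi\in\bR^n : L(\xi)_j\ge 0,\ 1\le j\le d\}$.

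For the opposite inclusion I would show that every $\xi$ in the right-hand polyhedron is attained: set $z_j := \sqrt{2\,L(\xi)_j}\in\bR_{\ge 0}$, which is well-defined since $L(\xi)_j\ge 0$, so that $\mu_{\bT^d}(z) = \beta^*(\xi)\in\ker(d\rho_\bC)_1^*$ and hence $z\in\mu_N^{-1}(a)$; then $\mu_a(\pi_a(z)) = \xi$ by the computation above, so $\xi\in{\mathrm{\Delta}}$. This establishes \eqref{eq: polytope facets}. Because each $L(\cdot)_j$ is affine and nonnegative on ${\mathrm{\Delta}}$, the set $\{L(\xi)_j = 0\}\cap{\mathrm{\Delta}}$ is a face of the convex polytope ${\mathrm{\Delta}}$ (a polytope by Theorem \ref{thm: convexity}), cut out by the supporting hyperplane with inward-pointing normal $v^j$; primitivity of $v^j$ is precisely the statement, recorded in Section \ref{sec: cx quotient}, that the columns of $B$ are primitive.

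The step that needs genuine care — and the one I expect to be the main obstacle — is showing that each $\{L(\xi)_j = 0\}\cap{\mathrm{\Delta}}$ is actually a \emph{facet} (nonempty of codimension one), so that one recovers exactly the $d$ facets of ${\mathrm{\Delta}}$ with the asserted normals and no one of the $d$ inequalities is redundant. This is equivalent to the coordinate subspace $\{z_j = 0\}$ meeting $\mu_N^{-1}(a)$ in a set of the expected dimension, which is where one needs the explicit combinatorics of $U_a$ — i.e., which orbit strata $\bC^d_J$ actually occur in $U_a$ — together with the fact that the Delzant construction starts from $d$ equal to the number of facets of ${\mathrm{\Delta}}$, forcing the count to match. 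Granting this, $\{\mathcal{F}_j\}_{j=1}^d$ is precisely the facet set with $v^j$ the primitive inward normal to $\mathcal{F}_j$. Everything else is the linear algebra of the exact sequences \eqref{eqn:ntt}–\eqref{eq: linearized dual ses} plus the defining property of symplectic reduction, so the argument should be short.
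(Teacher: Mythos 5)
Your proof is correct and follows essentially the same route as the paper: both normalize $\kappa$ so that $(d\rho_\bC)_1^*(\kappa)=-a$, use the exact sequence \eqref{eq: linearized dual ses} to identify $\mu_N^{-1}(a)$ with the preimage under $\mu_{\bT^d}$ of $\mathrm{im}\,\beta^*$, and read off $L(\xi)_j=\tfrac{1}{2}|z_j|^2\geq 0$, though your version spells out both inclusions and the identity $\alpha^*\circ\beta^*=\mathbb{I}_n$ where the paper leaves the bijection implicit. You are also right to flag the nonredundancy of the $d$ inequalities as the one step not settled by this linear algebra; the paper itself defers it to \cite{CDG} and \cite[Theorem 3.3]{G94} rather than arguing it in the text.
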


\begin{remark}
This is a result proven in \cite[Equation (6)]{CDG}. The proof here is based off of \cite[Theorem 3.3]{G94} where $\beta = (d \beta_\bC)_1|_{\bR^d}$, $Z^\epsilon_r = \bR^d_+\ni (e^{r_1},\ldots,e^{r_d})$ where $e^{r_j} = \Re(z_j)$, $\iota = (d \rho_\bC)_1$ and their $\alpha_k = \iota^* e_k$, $s_j=(e^{r_j})^2/2$, $\lambda_j=\kappa_j$ though $\la = a$ and one considers the 0 level set, $x\in \bR^n$ there is $\xi$ here, and $h = \mu_{\bT^d}$.
\end{remark}

As a consequence of Theorem \ref{thm: convexity}, we have a fibration $\mu_a: M^{2n}\to {\mathrm{\Delta}}$ whose fibers are connected.  For $\xi$ in the boundary $\partial {\mathrm{\Delta}}$, if $\xi$ belongs to a face of codimension $r$, then  $\mu_a^{-1}(\xi)$ consists of fixed points of a $r$-dimensional subtorus of $\bT^n$.   So the fibers of $\mu_a$ above $\partial {\mathrm{\Delta}}$ are degenerate smaller tori.  Any point $\xi$ in the interior of the polytope $\mathring{{\mathrm{\Delta}}}$ is a regular value of $\mu_a$, meaning that $d\mu_a$ is surjective and so the $n$ Hamiltonian vector fields generated by the components of $\mu_a$ are linearly independent, so the fiber above $\xi$ is $\bT^n$.  Because $\bT^n$ is abelian, these vector fields commute, hence the components of $\mu_a$ Poisson commute, and so the $\bT^n$-orbit is isotropic of dimension $n$.  That means each fiber $\mu_a^{-1}(\xi)\cong \bT^n$ is a Lagrangian submanifold of $(M,\omega)$.  Hence we have the following symplectomorphism identifying $\mu^{-1}_a(\mathring{{\mathrm{\Delta}}})$ with the total space of a Lagrangian torus fibration
\begin{equation}\label{eq: action angle}
\Big(\mu^{-1}_a(\mathring{{\mathrm{\Delta}}}),\omega_a\Big)  \cong \left(\mathring{{\mathrm{\Delta}}}\times \bT^n,\  \sum\limits_{j=1}^n d\xi_j\wedge d\theta_j\right)
\end{equation}
where $(\xi, \theta)$ is known as the angle-action coordinates, with $\xi$ a coordinate on $\mathring {\mathrm{\Delta}}$ and $\theta$ a coordinate on the Lie algebra of $\bT^n$, so $e^{i\theta}\in \bT^n$. This will be discussed in more detail in Section \ref{sec: kahler} on the K\"ahler potential.

\subsubsection{Description of $U_a$ and complex toric coordinates.} \label{subsubsec: Ua}   In Section \ref{sec: cx quotient}, below Theorem \ref{thm: GIT}, we mentioned Audin \cite{AudinBook} and Guillemin \cite{GuilleminBook} gave a description of $U_a$, which we summarize here.  

If $f$ is a face of ${\mathrm{\Delta}}$ of codimension $r$, then $f$ is at the intersection of $r$ facets $\cF_{j_1}\cap \cdots \cap \cF_{j_r}$.  So $f$ corresponds to an ordered multi-index $J_f=(j_1,\ldots, j_r)$ of the form given in Equation \eqref{eq: multi-index}.  Then $U_a$ that satisfies the properties for Theorem \eqref{thm: GIT} is described by 
\begin{equation}
U_a=\bigcup_f \bC_{J_f}^d,
\end{equation}
for $f$ a face of  our moment polytope ${\mathrm{\Delta}}=\mu_a(M)$ and $\bC_{J_f}^d$ as defined in Equation \eqref{eq: CJ}.   For each face $f$ and $z\in \bC_{J_f}^d$, the stabilizer of $z$ is $(\bT^d_\bC)_{J_f}$ given in Equation \eqref{eq: stabilizer}.

The open face that is the interior $\mathring {\mathrm{\Delta}}$ of the polytope is of codimension 0, and corresponding to that we have the $\bT^d_\bC$-orbit $\bC^d_{\emptyset}=(\bC^*)^d$ on which the $\bT^d_\bC$-action is free, and thus descends to a single $\bT^n_\bC$-orbit on which $\bT^n_\bC$-action is free.  Hence we have 
\begin{equation}\label{eq: cx toric}
 \mu_a^{-1}(\mathring {\mathrm{\Delta}})\cong \bT^n_\bC.  
\end{equation}
\begin{definition}[Notation for complex toric coordinates]\label{def: toric coordinates} Let $(u_1,\ldots,u_n)$ be the complex coordinates on the Lie algebra $\ft^n_\bC \cong\bC^n$ of $\bT_\bC^n\cong(\bC^*)^n$. Also, denote $u_j:=x_j+i\theta_j$, so $\theta_j$ is the coordinate on the Lie algebra of $\bT^n$. The exponential map  $\exp: \ft^n_\bC  \to \bT^n_\bC$ 
$$
u= (u_1,\ldots,u_n)\mapsto  t=(t_1,\ldots, t_n)=(e^{u_1},\ldots, e^{u_n}).
$$ 
gives the complex toric coordinates $t$.  
\qed 
\end{definition} 
\noindent With this notation, the identification in Equation \eqref{eq: cx toric} is given by 
\begin{equation}\label{eq: moment map identification}
\mu_a(t_1, \ldots, t_n)=\mu_a( e^{x_1+i \theta_1},\ldots, e^{x_n+i \theta_n}) = (\xi_1,\ldots, \xi_n)\in \mathring{{\mathrm{\Delta}}}. 
\end{equation}
This gives the exchange between the complex toric coordinates and the moment map coordinates established by the combination of 
Equation \eqref{eq: cx toric} and Equation \eqref{eq: action angle}, 
\begin{equation}
\bT^n_\bC \cong \mu_a^{-1}(\mathring{{\mathrm{\Delta}}})\cong \mathring{{\mathrm{\Delta}}}\times \bT^n.
\end{equation}

\begin{example}[$\bP^n$]
\label{ex: s_quot_mom_map}
We continue Examples \ref{ex: cx Pn}, \ref{ex: muN Pn}, and \ref{ex: Pn} on $\bP^n$.  For the choice of $B$ in Equation \eqref{eq: B Pn}, which corresponds to the $\beta_\bC$ in Equation \eqref{eq: beta Pn}, one possible right inverse $A$ and its corresponding map $\alpha_\bC: \bT^n_\bC\to \bT^d_\bC$ are
\begin{equation}\label{eq: alpha Pn}
A = \left[ \begin{array}{ccc} & \bI_n & \\ \hline  0  & \cdots & 0 \end{array}\right],\quad \alpha_\bC(t_1,\ldots, t_n) =(t_1,\ldots t_n, 1).
\end{equation}
(We'll discuss more about the different choices of $A$ in Example \ref{ex: P2}.) Then $t\in \bT^n_\bC$ acts on $U_a=\bC^{n+1}-\{0\}$ via $\alpha_\bC(t)\in \bT^d_\bC$, which acts on $z\in U_a\subset \bC^{n+1}$ by component-wise multiplication 
\[
(t_1,\ldots, t_n)\cdot (z_1,\ldots, z_{n+1}) = (t_1,\ldots t_n, 1)\cdot (z_1,\ldots, z_{n+1})= (t_1 z_1,\ldots, t_n z_n, z_{n+1}).
\]
 This restricts to a Hamiltonian $\bT^n=U(1)^n$-action on $\bC^{n+1}$ with the standard symplectic form, and the moment map is 
 \[
 \alpha^*\circ \mu_{\bT^d}: \bC^{n+1}\to \bR^n, \quad (\alpha^*\circ \mu_{\bT^d})(z_1,\ldots,z_{n+1}) = \frac{1}{2}(|z_1|^2,\ldots, |z_n|^2),
 \]
  up to addition of a constant vector in $\bR^n$.  The above $\bT^n_\bC$-action on $U_a=\bC^{n+1}-\{0\}$ descends to a $\bT^n_\bC$-action on $\bP^n=U_a/N_\bC=\mu^{-1}_N(a)/N$, where $N_\bC=\bC^*$ and $N=U(1)$, as
\[
(t_1,\ldots,t_n)\cdot [z_1: \cdots  :z_{n+1}]= [t_1 z_1: \cdots : t_n z_n: z_{n+1}].
\]
This $\bT^n_\bC$-action restricts to a Hamiltonian $\bT^n$-action on $(\bP^n, \omega_a)$ for any $a>0$ and with the moment map $\mu_a:\bP^n \to \bR^n$, which (up to a constant in $\bR^n$) written in terms of the homogeneous coordinates is 
\begin{equation}\label{eq:Pn_mom_map}
\begin{array}{ll}
\mu_a([z_1: \cdots :z_{n+1}])& = (\alpha^*\circ \mu_{\bT^d} \circ R_a)(z_1,\ldots,z_{n+1}) \\
&=  \frac{a  (|z_1|^2,\ldots, |z_n|^2)}{|z_1|^2+\cdots + |z_{n+1}|^2}=(\xi_1,\ldots, \xi_n),
\end{array}
\end{equation}
where recall that the formula for $R_a$ is given by Equation \eqref{eq: Pn Ra}.  The image of this moment map is then the moment polytope 
\[
\mathring{{\mathrm{\Delta}}} =\{ (\xi_1,\ldots,\xi_n)\in \bR^n: \xi_1>0, \ldots, \xi_n>0, a-\xi_1-\cdots - \xi_n >0\},
\]
and one can see that the inward pointing primitive normal vectors to the facets are indeed the column vectors of $B$.  For $n=2$, this moment polytope is shown in Figure (\ref{subfig: standard P2}).
\qed
\end{example}
 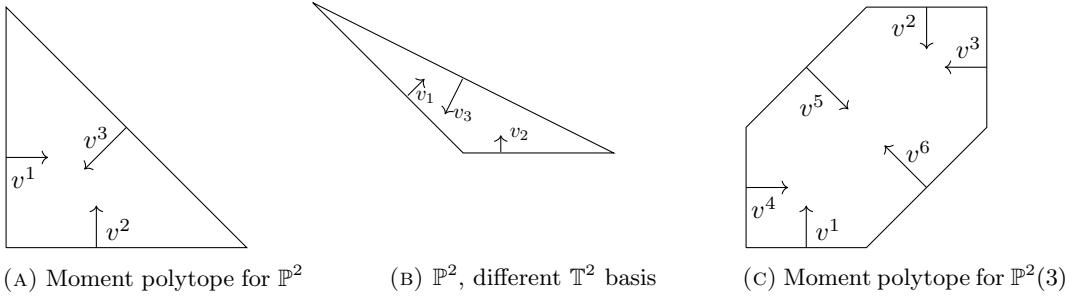
\begin{figure}[htbp]
\centering
\begin{subfigure}[b]{0.25\textwidth}
\begin{tikzpicture}[scale=0.8]
\draw (0,0) -- (4,0) -- (0, 4) -- (0,0);
\draw[->] (1.5, 0)-- (1.5, 0.7);
\node [right] at  (1.5, 0.3) {$v^2$};
\draw[->] (0, 1.5)-- (0.7, 1.5);
\node [below] at  (0.3, 1.5) {$v^1$};
\draw[->] (2,2)-- (1.3, 1.3);
\node [above] at  (1.5, 1.5) {$v^3$};
\end{tikzpicture}
\caption{Moment polytope for $\bP^2$}
\label{subfig: standard P2} 
\end{subfigure}
\begin{subfigure}[b]{0.35\textwidth}
\begin{tikzpicture}[scale=0.5, every node/.style={inner sep=0,outer sep=0, scale = 0.8}]
\draw (0,0) -- (4,-4) -- (8, -4) -- (0,0);
\node (v1) at (2.5,-2.5) {};
\node (v2) at (3,-2) {};
\node (v5) at (5,-4) {};
\node (v6) at (5,-3.5) {};
\node (v3) at (4,-2) {};
\node (v4) at (3.5,-3) {};
\draw[->]  (v1) edge (v2);
\draw[->]  (v3) edge (v4);
\draw[->]  (v5) edge (v6);
\node at (3,-2.5) {$v_1$};
\node at (4,-3) {$v_3$};
\node at (5.5,-3.5) {$v_2$};
\end{tikzpicture}
\vspace{0.5in}
\caption{$\bP^2$, different $\bT^2$ basis}
\label{subfig: slanted P2}
\end{subfigure}
\begin{subfigure}[b]{0.27\textwidth}
\begin{tikzpicture}[scale=0.8]
\draw  (6,0)--(8,0)--(10, 2)--(10, 4)--(8, 4)--(6,2)--(6,0);
\draw [->] (7,0)--(7, 0.7);
\node [right] at (7, 0.3) {$v^1$};
\draw[->] (6,1)--(6.7, 1);
\node[below] at (6.3, 1) {$v^4$};
\draw[->] (9,1)--(8.3, 1.7);
\node [right] at (8.5, 1.6) {$v^6$};
\draw[->] (10, 3)--(9.3, 3);
\node [above] at (9.7, 3) {$v^3$};
\draw[->] (9, 4)--(9, 3.3);
\node [left] at (9, 3.7) {$v^2$};
\draw[->] (7, 3)--(7.7, 2.3);
\node[below] at (7.1, 2.7) {$v^5$};
\end{tikzpicture}
\caption{Moment polytope for $\bP^2(3)$}
\label{subfig: hexagon}
\end{subfigure}
\caption{In the three figures above, the labeling, $v^j$, of the inward pointing normal vectors to the facets matches with the ordering of the columns of $B$ in Examples \ref{ex: s_quot_mom_map} ($n=2$) and \ref{ex: P2}, $B'$ in Example $\ref{ex: P2}$, and $B$ in Example \ref{ex: CP2(3)}, respectively.}
\label{fig: CP2 and CP2(3)}
\end{figure}

\begin{example} [The blowup $\bP^2(3)$ of $\bP^2$ at the three torus fixed points]  \label{ex: CP2(3)}  The projective plane blown up at one point is isomorphic to $\bP^1 \times \bP^1$, see \cite[p 479-480]{gh} or note that blow-up of a complex surface at a vertex of the polytope replaces the point with $\bP^1$ hence the triangle for $\bP^2$ becomes a quadrilateral. Then we can perform two toric blow-ups at the fixed points corresponding to two opposite vertices of the rectangle which is the moment map of $\bP^1 \times \bP^1$, to obtain a hexagon. (See \cite[p 40-41]{fulton} for the toric blow up and \cite[Section 7]{symp_intro} for the symplectic blow up.) So in this case we know $n=2$ and $d=6$. Here is one choice of $B$, giving the moment polytope in Figure (\ref{subfig: hexagon}).  
$$
B = \left[ \begin{array}{rrrrrr} 0& 0 & -1 & 1 & 1 & -1 \\ 1 & -1 & 0 & 0 & -1 & 1 \end{array}\right],\quad
Q = \left[ \begin{array}{cccc}
 1 & 0 & 1 & -1 \\  
 1 & 0 & 0 & 0\\
 0& 1 & 0 & 0\\
 0 & 1 & -1 & 1\\
 0 & 0 & 1 & 0\\
0 & 0 & 0 & 1 
\end{array}\right],
$$
$$
\beta_\bC(\tit_1,\ldots,\tit_{6}) =  (\tit_3^{-1}\tit_4\tit_5\tit_6^{-1},  \tit_{1} \tit_2^{-1}\tit_5^{-1}\tit_6),
$$
$$
\rho_\bC(\lambda_1,\ldots, \lambda_4)=(\lambda_1\lambda_3\lambda_4^{-1},\  \lambda_1,\ \lambda_2,  \ \lambda_2\lambda_3^{-1}\lambda_4,\   \lambda_3, \  \lambda_4).
$$
We can make the following choice for $\alpha_{\bC}$:
$$
A = \left[ \begin{array}{cc}
 0 & 1  \\  
 0& 0 \\
 0& 0 \\
 1 & 0\\
 0& 0\\
 0 & 0 
\end{array}\right], \quad \alpha_{\bC}(t_1,t_2)=(t_2, 1, 1, t_1, 1,1).
$$
Again, the choice of $\alpha_\bC$ is not unique. 
Here $\mu_N: \bC^6\to \bR^4$ is given by  
$$
\begin{array}{l}
\mu_N(z_1,\ldots, z_6)\\
\hspace{0.3in}=\frac{1}{2}\Big(|z_1|^2+|z_2|^2, |z_3|^2+|z_4|^2, |z_1|^2-|z_4|^2+|z_5|^2, -|z_1|^2+|z_4|^2+|z_6|^2\Big),
\end{array}
$$
and we will not try finding $R_a$ for this example here. It involves solving a third degree polynomial. We will get a feel for the complexity of solving for $R_a$ in Section \ref{subsec: KP1} below for $K_{\bP^1}$; the formula gets very complicated. 
\qed
\end{example}

\subsection{Canonical bundle $K_M$ continued}  \label{sec: KM alpha}

In Section \ref{subsec: KM}, from $M=U_a/N_\bC=\mu_N^{-1}(a)/N$, we constructed $K_M$ as quotients $K_M=U_a^+/N_\bC=(\mu_N^+)^{-1}(a)/N$, where $U_a^+=U_a\times \bC$.  In this section, we writen down the general formulation for the moment map of the $\bT^{n+1}$-action on $K_M$. Extending the theory for the construction of the moment map from $M$ to $K_M$ is straightforward, the main difference is keeping track of the extra $p$ coordinate corresponding to the fiber direction.  However, writing down the moment map explicitly is a hard problem in general, as we'll illustrate in Section \ref{subsec: KP1}.

\begin{definition}\label{def:alpha-p} We define $\alpha^+_\bC: \bT_\bC^{n+1} \to \bT_\bC^{d+1}
$ by
\begin{equation}
\alpha^+_{\bC}(t_1,\ldots,t_n, t_{n+1}) =\left(\prod_{m=1}^n t_m^{s_1^m},\ldots, \prod_{m=1}^n t_m^{s_d^m},   \prod_{m=1}^n t_m^{-\sum_{k=1}^d s_k^m} \cdot t_{n+1}\right).
\end{equation}
\end{definition}
\noindent Then $\beta^+_\bC \circ \alpha^+_\bC (t)=t$ for all $t\in \bT^{n+1}_\bC$, i.e. 
$\alpha^+_\bC$ is a right inverse of the surjective group homomorphism $\beta^+_\bC: \bT^{d+1}_\bC \to \bT^{n+1}_\bC$. 
Taking the differential of $\alpha^+_\bC$ at the identity, we obtain an injective linear map 
\begin{equation}\label{eqn:alpha-p}
\ft_\bC^{n+1} =\bC^{n+1} \stackrel{(d\alpha^+_\bC)_1}{\longrightarrow} \ft_\bC^{d+1} =\bC^{d+1}
\end{equation}
given by the matrix
$$
A^+ =\left[ \begin{array}{cccc} s^1_1 &\cdots & s^n_1 & 0\\ \vdots & &  & \vdots \\ s^1_d &\cdots & s^n_d & 0  \\ 
\displaystyle{-\sum_{k=1}^d s_k^1} & \cdots & \displaystyle{ -\sum_{k=1}^d s_k^n} & 1 \end{array}\right].
$$
Then $B^+A^+=\bI_{n+1}$, where $\bI_{n+1}$ is the $(n+1)\times (n+1)$ identity matrix. Below we write the matrix for a given choice of $\alpha^+_{\bC}$ for a couple of examples:

\begin{example}[$K_{\bP^n}$]\label{ex:KPn_choice1} 

$$
B^+ = \left[ \begin{array}{c|r|r}  & -1 & 0 \\ \mathbb{I}_{n} & \vdots & \vdots \\ & -1 & 0\\ \hline 1 \ldots 1 & 1 & 1\end{array}\right],\quad
A^+ = \left[ \begin{array}{rrr|r} & & & 0 \\ & \mathbb{I}_{n} & & \vdots \\ & & & 0 \\ \hline 0 & \ldots & 0 & 0\\ \hline -1 &  \ldots &  -1  & 1\end{array}\right].
$$
This choice, with all 1's in the last row of $B^+$, is often used in the polytopes defining SYZ mirrors in mirror symmetry (discussed in Section \ref{sec: HMS}). We can make other choices for $B^+$, and we discuss one such choice in Example \ref{ex: KPn}.
\qed
 \end{example}

\begin{example}[$K_{\bP^2(3)}$]  \label{key example}  Extending Example \ref{ex: CP2(3)} on $\bP^2(3)$, for $K_{\bP^2(3)}$ we can take
$$
B^+ = \left[ \begin{array}{rrrrrrr} 0& 0 & -1 & 1 & 1 & -1 & 0 \\ 1 & -1 & 0 & 0 & -1 & 1 & 0 \\ 1 & 1 & 1 & 1 & 1 & 1 & 1 \end{array}\right],
Q^+ = \left[ \begin{array}{rrrr}
 1 & 0 & 1 & -1 \\  
 1 & 0 & 0 & 0\\
 0& 1 & 0 & 0\\
 0 & 1 & -1 & 1\\
 0 & 0 & 1 & 0\\
0 & 0 & 0 & 1 \\
-2& -2 & -1& -1
\end{array}\right], A^+ = \left[ \begin{array}{rrr}
 0 & 1  & 0  \\  
 0& 0 & 0 \\
 0& 0 & 0\\
 1 & 0 &  0\\
 0& 0 & 0\\
 0 & 0 & 0\\
 -1 & -1 & 1
\end{array}\right],
$$ 
and the maps are expressed as follows 
$$\beta^+_\bC(\tit_1,\tit_2,\tit_3,\tit_{4},\tit_5,\tit_6,\tit_7)=(\tit_3^{-1}\tit_4\tit_5\tit_6^{-1},\tit_1\tit_2^{-1}\tit_5^{-1}\tit_6,\tit_1\tit_2\tit_3\tit_4\tit_5\tit_6\tit_7),$$
$$\rho^+_\bC (\lambda_1,\lambda_2,\lambda_3,\lambda_4) =(\lambda_1\lambda_3\lambda_4^{-1}, \lambda_1,\lambda_2,\lambda_2\lambda_3^{-1}\lambda_4,\lambda_3,\lambda_4,\lambda_1^{-2}\lambda_2^{-2}\lambda_3^{-1}\lambda_4^{-1}), \text{ and}$$
\[
\alpha_\bC^+ = (t_2, 1, 1, t_1, 1, 1, t_1^{-1}t_2^{-1}t_3).
\]
\qed
\end{example} 

Furthermore, we have an analogue in the $K_M$ case of Equations \eqref{eq: mu a} and \eqref{eq: mu a pt2} by the same arguments. They are
\begin{equation}
\mu^+_a \circ \pi^+_a(z) =  (\alpha^+)^* \circ \mu_{\bT^{d+1}}(z),\quad z\in (\mu_N^+)^{-1}(a),
\end{equation}
and
\begin{equation}\label{eq: mu a for KM}
\mu^+_a \circ \widetilde\pi_a^+(z) = (\alpha^+)^*\circ \mu_{\bT^{d+1}}\circ R^+_a(z),\quad z\in {U_a}\times \bC.
\end{equation}
The image of $\mu^+_a$ is again a convex polyhedron ${\mathrm{\Delta}}^+$.


\subsection{Holomorphic coordinate charts for $M$} \label{sec: holom chart} In this section, we describe holomorphic coordinate charts for the complex manifold $M=U_a/N_\bC$.  This will help us better understand both $M$ and the $\bT^n_\bC$-action.  Examples  \ref{ex: Pn chart} and \ref{ex: P2} are provided at the end for illustrating the content of this section. 
\subsubsection{Open covering of $M$.}  Any vertex $v$ of the moment polytope ${\mathrm{\Delta}}$ is at the intersection of $n$ facets $\cF_{j_1}\cap \cdots \cap \cF_{j_n}$, where $J_v=(j_1,\ldots, j_n)$ is an ordered multi-index of the form in Equation \eqref{eq: multi-index} and each facet  is given by Equation \eqref{eq: facet}.   For any $z\in \bC_{J_v}^d$, there is an open neighborhood of $z$ 
\begin{equation}
\widetilde V_{J_v}:=\{ (z_1, \ldots, z_d) \in \bC^d \mid z_j\neq 0 \text{ if } j\notin J_v\}\subset U_a,
\end{equation}
and this descends to an open set
\begin{equation}
V_{J_v} :=\{[z_1:\ldots:z_d]\in  M=U_a/N_\bC  \mid z_j \neq 0 \text{ if } j\notin J_v \} \subset M=U_a/N_\bC.
\end{equation}
 Open sets of this kind, one for each vertex $v$ of ${\mathrm{\Delta}}$, give an open covering of 
 \begin{equation} 
 M=\bigcup_{v} V_{J_v}.
 \end{equation}
 \subsubsection{Holomorphic charts and inhomogeneous coordinates.} For each $V_{J_v}$, we have a biholomorphic map $\varphi^{J_v}: V_{J_v}\to \bC^n$.  To describe $\varphi^{J_v}$, let us assume, without loss of generality by renumbering the facets, that $J_{v}=(1,\ldots, n)$.   
 
 Then 
 \[
V_{J_v} =\left\{[z_1:\ldots:z_d]\in  M=U_a/N_\bC  \mid z_j \neq 0 \text{ if } j\in \{n+1,\ldots, d\} \right\}.
\]
Now, we would like to scale $z_{n+1}, \ldots, z_d$ to $1$ using the the $N_\bC$-action and the notations for it in Equation \eqref{eq: general rho}.  Pick $\lambda_1,\ldots \lambda_{d-n}\in N_\bC$ such that 
\begin{equation}\label{eq: scaling}
\prod_{\ell=1}^{d-n} \lambda_\ell^{Q^\ell_{n+1}}=z_{n+1}^{-1}, \ \  \ldots,\ \  \prod_{\ell=1}^{d-n} \lambda_\ell^{Q^\ell_{d}}=z_d^{-1}.
\end{equation}
This is doable for the following reason.  We know that the point $(0, \ldots, 0, 1, \ldots , 1)$, with the first $n$ entries being 0 and the last $d-n$ entries being 1, is a point in $U_a$.  The $N_\bC$-action on this point is given by 
\[
(\lambda_1,\ldots,\lambda_{d-n})\cdot (0,\ldots , 0, 1,\ldots, 1)= \left(0,\ldots, 0, \prod_{\ell=1}^{d-n} \lambda_\ell^{Q^\ell_{n+1}},\ldots, \prod_{\ell=1}^{d-n} \lambda_\ell^{Q^\ell_{d}}\right).
\]
Because the $N_\bC$-action is free, from the above equation we see that the following map is an isomorphism
\[
\begin{array}{rcl}
N_\bC=(\bC^*)^{d-n} & \rightarrow & (\bC^*)^{d-n}\\
(\lambda_1,\ldots, \lambda_{d-n}) &\mapsto & \left(\prod\limits_{\ell=1}^{d-n} \lambda_\ell^{Q^\ell_{n+1}},\ldots, \prod\limits_{\ell=1}^{d-n} \lambda_\ell^{Q^\ell_{d}}\right).
\end{array}
\]
This shows that there is a choice of $\lambda_1,\ldots ,\lambda_{d-n}$ such that Equation \eqref{eq: scaling} holds. Then we have
 \begin{equation}\label{eq: V in y coordinates}
V_{J_v} =\left\{\left[ y^{J_v}_1= \prod_{\ell=1}^{d-n} \lambda_\ell^{Q^\ell_{1}}z_1 :\ldots:  y^{J_v}_n=\prod_{\ell=1}^{d-n} \lambda_\ell^{Q^\ell_{n}}z_n: 1:\cdots :1\right] \right\}\subset M=U_a/N_\bC.
\end{equation}
So we get the map 
\begin{equation}\label{eq: holomorphic chart map}
\varphi^{J_v}: V_{J_v} \to  \bC^n, \quad 
 [z_1:\cdots :z_d] \mapsto y^{J_v}=(y_1^{J_v},\ldots, y_n^{J_v}).
 \end{equation}
 The origin $y^{J_v}=0$ of this $\bC^n$-chart corresponds to the preimage of the vertex $v$,
 \[p^{J_v}=\mu_a^{-1}(v)=[0:\cdots : 0: 1: \cdots :1]\in V_{J_v},\]
  which is a fixed point of the $\bT^n_\bC$-action.
  
  Equations \eqref{eq: V in y coordinates} and \eqref{eq: holomorphic chart map} identify $\varphi^{J_v}(V_{J_v})=\bC[y_1^{J_v},\ldots, y_n^{J_v}]$  with the affine subspace
  \begin{equation} \label{eq: affine chart}
    U_{J_v}:=\{(z_1,\ldots, z_d)\mid  z_j=1 \text { if } j\notin J_v\}\subset \widetilde V_{J_v}\subset \bC^d,
  \end{equation}  
  with $y_{k}^{J_v}=z_{j_k}$ for $k=1,\ldots, n$.
  Again, when using $J_v=(1,\ldots, n)$, we have $U_{J_v}=\{(z_1,\ldots, z_n, 1,\ldots, 1)\} \subset \bC^d$, and its identification with $\bC[y_1^{J_v},\ldots, y_n^{J_v}]$ is given by $z_k=y^{J_v}_k$, $k=1,\ldots, n$. Because of this identification, below in section we will often conveniently write $U_{J_v}=\bC[y_1^{J_v},\ldots, y_n^{J_v}]$.
  
  To note the terminology, the coordinates $z=(z_1,\ldots, z_d)\in U_a\subset \bC^d$ are called the homogeneous coordinates on $M$. For each vertex $v$, the coordinates $y^{J_v}=(y_1^{J_v},\ldots, y_n^{J_v})$ on the $\bC^n$-chart associated to $v$, more precisely on the image of the chart map $\varphi^{J_v}(V_{J_v})$, are called the inhomogeneous coordinates.  Because of the identification of $(y_k^{J_v})$ with the coordinates $(z_{j_k})\in U_{J_v}$ on the affine subspace, $y^{J_v}$ are also called the affine coordinates in the literature.

\subsubsection{Change of basis of $\bT^n_\bC$ and the embedding of $\bT^n_\bC$ in each  $\bC^n$-chart.} \label{sec: change Tn basis} Our matrix $B$ is a linear transformation $\mathfrak t^d_\bC\to \mathfrak t^n_\bC$, where $\mathfrak t^n_\bC=\Lie(\bT^n_\bC)$, so changing the basis of $\mathfrak t^n_\bC$ is equivalent to performing a row operation on $B$ to get a new matrix $\widehat B$.

Let us continue to assume that $v$ is the vertex such that $J_v=(1,\ldots, n)$, so the first $n$ column vectors $\{v^1,\ldots, v^n\}$ of $B$ are linearly independent. The remaining column vectors $v^{n+1},\ldots, v^d$ can then be expressed as linear combinations of $\{v^1,\ldots, v^n\}$ as 
\begin{equation}\label{eq: change basis cjk}
v^{n+k}=\sum_{j=1}^nc_{jk}v^j,
\end{equation}
for $k=1,\ldots, d-n$.  Now let us take
\[
\{v^1,\ldots, v^n\}
\]
to be the new basis for $\mathfrak t^n_\bC$. We see that $B$ in the old basis is now
\begin{equation}
\widehat B= PB= \left[\begin{array}{c|ccc} \mathbb{I}_{n} & C
\end{array}\right]
\end{equation}
in the new basis, where $P$ is the $n\times n$ change of basis matrix with $P^{-1}$ being the matrix whose columns are $v^1,\ldots, v^n$, in that order,  and $ C=(c_{jk})$ is a $n\times (d-n)$ matrix consisting of the constants $(c_{jk})$ from  Equation \eqref{eq: change basis cjk} above.  Note for the moment polytope, since the first $n$ columns of $B$ are the vectors normal to the facets meeting at the vertex $v$, this change of basis of $\mathfrak t^n_\bC$ transforms those normal vectors from from $\{v^1,\ldots, v^n\}$ to the column vectors of $\bI_n$.  In other words, it straightens out that corner.  

One choice of right inverse to $\widehat B$ is 
\[
\widehat A^{J_v}= \left[\begin{array}{c} \bI_n \\ \hline 0 \end{array}\right].
\]
This is not the only choice of right inverse to $\widehat B$, but this is the choice that equips the $\bC^n$-chart at this vertex $v$ with the standard $\bT^n_\bC$-action.  Indeed, following the setup in  Equations \eqref{eq: define alpha} and \eqref{eq: A}, this $\widehat A^{J_v}$ corresponds to $\widehat \alpha_\bC^{J_v}$ where
\[
\widehat \alpha_\bC^{J_v}(t_1,\ldots, t_n)=(t_1,\ldots, t_n,1,\ldots, 1) \in U_{J_v}\subset U_a.
\]
The $\widehat \alpha_\bC^{J_v}(\bT^n_\bC)$-action on $U_a$ descends to the following  standard $\bT^n_\bC$-action on $\varphi^{J_v}(V_{J_v}) = \bC^n$ 
\[
(t_1,\ldots, t_n)\cdot \left(y_1^{J_v},\ldots, y_n^{J_v}\right) =\left(t_1y_1^{J_v},\ldots, t_ny_n^{J_v}\right).
\]
 The point $y^{J_v}=0$ is the toric fixed point, which corresponds to the moment map preimage $p^{J_v}=\mu_a^{-1}(v)$ of the vertex $v\in {\mathrm{\Delta}}$.  This action is free on $(\bC^*)^n$, i.e. where $y^{J_v}_j\neq 0$ for all $j=1,\ldots, n$, so this action embeds $\bT^n_\bC\cong (\bC^*)^n$ via $\widehat\alpha_\bC^{J_v}$ in $\varphi^{J_v}(V_{J_v})=\bC^n$.  Combining this embedding with the identification of $U_{J_v}$ with $\bC[y_1^{J_v},\ldots, y_n^{J_v}]$ we discussed around Equation \eqref{eq: affine chart}, we have that $\widehat \alpha^{J_v}_\bC$ gives the following isomorphism 
\[
\begin{array}{rccl}
\widehat \alpha_\bC^{J_v}:&  \bT^n_\bC &\xrightarrow{\ \ \cong\ \ }& U_{J_v}\backslash \{z_k=0\}_{k=1}^n= \bC[y_1^{J_v},\ldots, y_n^{J_v}]\backslash\{y^{J_v}_k=0\}_{k=1}^n\\
 & t_k& \mapsto &  t_k=y_k^{J_v}, \quad \text{for } k=1,\ldots n.
\end{array}
\]

The diagram below summarizes the change of basis we made on $\mathfrak t^n_\bC=\Lie(\bT^n_\bC)$,
\begin{equation}\label{eq: change Tn basis}
\begin{tikzcd}[column sep=huge, ampersand replacement=\&]
\mathfrak t^d_\bC \arrow[r, "B"]  \ar[equal]{d} \&  \mathfrak t^n_\bC\cong \bigoplus\limits_{j=1}^n \bC e^j  \arrow[r, "A^{J_v}=\widehat A^{J_v}P"] \arrow[d, "P"]  \&    \mathfrak t^d_\bC \ar[equal]{d}  \\
\mathfrak t^d_\bC \arrow[r, "\widehat B=PB"]\&\mathfrak t^n_\bC \cong \bigoplus\limits_{j=1}^n \bC v^j    \arrow[r, "\widehat A^{J_v}"]\& 
\mathfrak t^d_\bC 
\end{tikzcd},
\end{equation}
where $e^j$ is the $j^{\textrm{th}}$ standard basis vector.  For the chosen $\hA^{J_v}$ above, the corresponding $A^{J_v}$ in our original basis of $\mathfrak t^n_\bC$ satisfies $BA^{J_v}=\widehat B\widehat A^{J_v}=\bI_n$, so 
\begin{equation}\label{eq: vertex alpha choice}
A^{J_v}=\widehat A^{J_v}P= \left[\begin{array}{c} P \\ \hline 0 \end{array}\right].
\end{equation}
Corresponding to this $A^{J_v}$, we get an $\alpha_\bC^{J_v}$.  This is the choice of $\alpha^{J_v}_\bC$ such that the $\alpha^{J_v}_\bC(\bT^n_\bC)$-action on $U_a$ descends to the standard $\bT^n_\bC$-action on the $\varphi^{J_v}(V_{J_v})=\bC^n$-chart. The point $0\in \bC^n$ in this chart is the fixed point of this action and it corresponds to the moment map preimage $p^{J_v}=\mu_a^{-1}(v)$ of the vertex.   This $\bT^n_\bC$-action is free on the $(\bC^*)^n$ part of the chart, which gives the isomorphism 
\begin{equation}\label{eq: alpha affine chart}
\begin{array}{rccl}
\alpha_\bC^{J_v}:& \bT^n_\bC & \xrightarrow{\ \cong\ } &U_{J_v}\backslash \{z_k=0\}_{k=1}^n=\bC[y_1^{J_v},\ldots, y_n^{J_v}]\backslash\{y^{J_v}_k=0\}_{k=1}^n\\
 & t_k& \mapsto &  \alpha^{J_v}_\bC(t)_k=y_k^{J_v}, \quad \text{for } k=1,\ldots n.
\end{array}
\end{equation}
This relationship is very useful because it gives an identification of the inhomogeneous coordinates $y^{J_v}$ on each of the $\bC^n$-chart with the the complex toric coordinates $t\in \bT^n_\bC$.
 
 \subsubsection{Transition functions.} Suppose $v$ is a vertex at the intersection of $\bigcap_{\ell=1}^n \cF_{j_\ell}$ and $\tilde v$ is another vertex at the intersection of $\bigcap_{k=1}^n \cF_{j_k}$, so $J_v=(j_\ell)_{\ell=1}^n$ and $J_{\tv}=(j_k)_{k=1}^n$. Then the overlap $\varphi^{J_{\tv}}(V_{J_v}\cap V_{J_{\tv}})$ can be identified with 
\[
\varphi^{J_{\tv}}(V_{J_v}\cap V_{J_{\tv}})
= U_{J_{\tv}}\backslash \{z_j=0\}_{j \notin J_{v}}=\bC[y_1^{J_{\tv}},\ldots, y_n^{J_{\tv}}]\backslash \{y_\ell^{J_{\tv}}=0\}_{j_\ell \notin J_{v}}.
\]
Similarly, $\varphi^{J_{v}}(V_{J_v}\cap V_{J_{\tv}})$ can be identified with 
\[
\varphi^{J_{v}}(V_{J_v}\cap V_{J_{\tv}})
= U_{J_{v}}\backslash \{z_j=0\}_{ j\notin J_{\tv}}=\bC[y_1^{J_{v}},\ldots, y_n^{J_{v}}]\backslash \{y_k^{J_{v}}=0\}_{j_k \notin J_{\tv}}.
\]
Note that $\varphi^{J_{\tv}}(V_{J_v}\cap V_{J_{\tv}})=(\bC^*)^n$ if $J_v\cap J_{\tv}=\emptyset$; otherwise, it contains $(\bC^*)^n$ as a proper subset.

By Equation \eqref{eq: alpha affine chart}, we see that $\alpha_\bC^{J_v}$ and $\alpha_\bC^{J_{\tv}}$ give the following isomorphisms,
 \[
  \bT^n_\bC \overset{\alpha_\bC^{J_{\tv}}}{\cong} U_{J_{\tv}}\backslash \{z_j=0\}_{j\in J_{\tv}}=\bC[y_1^{J_{\tv}},\ldots, y_n^{J_{\tv}}]\backslash \{y_j^{J_{\tv}}=0\}_{j=1}^n\subset \varphi^{J_{\tv}}(V_{J_v}\cap V_{J_{\tv}}),
\]
\[
  \bT^n_\bC \overset{\alpha_\bC^{J_{v}}}{\cong} U_{J_{\tv}}\backslash \{z_j=0\}_{j\in J_{v}}=\bC[y_1^{J_{v}},\ldots, y_n^{J_{v}}]\backslash \{y_j^{J_{v}}=0\}_{j=1}^n\subset \varphi^{J_{v}}(V_{J_v}\cap V_{J_{\tv}}).
\]
Note again that the ``$\subset$'' above is ``$=$'' if $J_v\cap J_{\tv}=\emptyset$; otherwise, it is ``$\subsetneq$''.
 
So the transition function 
\[
\varphi^{J_{v}}\circ (\varphi^{J_{\tv}})^{-1}: \varphi^{J_{\tv}}(V_{J_v}\cap V_{J_{\tv}})  \longrightarrow \varphi^{J_{v}}(V_{J_v}\cap V_{J_{\tv}})
\]
 restricted to the $(\bC^*)^n\cong \bT^n_\bC$ part is exactly  $\alpha_\bC^{J_{v}}\circ (\alpha^{J_{\tv}}_\bC)^{-1}$.

Now let $J_v=(1,\ldots, n)$ and below we discuss more explicitly the relationship between the inhomogeneous coordinates $y^{J_v}$ and $y^{J_{\tv}}$ in the overlap of the two charts. Denote by $\tv^k:=v^{j_k}$, $k=1,\ldots, n$, the column vector of $B$ that is the normal vector to the facet $\cF_{j_k}$.  Then
\begin{equation}\label{eq: vertex transition}
\tv^j =\sum_{k=1}^n d_{jk}v^k, 
\end{equation}
where $D=(d_{jk}) \in \mathrm{GL}(n,\bZ)$.  In fact, because $\det(DD^{-1})=\det(D)\det(D^{-1})=1$, and that $\det(D)$ and $\det(D^{-1})$ are both integers, we must have $\det(D)=\pm 1$.  We can further assume that $\det(D)=1$, i.e. $D\in \mathrm{SL}(n,\bZ)$, since otherwise if $\det(D)=-1$, we can interchange $\tv^1$ and $\tv^2$ to make $\det(D)=1$.

Then the transition map for the inhomogeneous coordinates is given by
\begin{equation}
y_k^{J_{v}} =\prod_{j=1}^n ( y_j^{J_{\tv}})^{d_{jk}}\quad\text{ for } k=1,\ldots ,n.
\end{equation}

\subsubsection{Summary.}
To summarize: in \cite{G94} and \cite{AudinBook}, the starting point is a Delzant moment polytope, from which the symplectic and complex geometry of $M$ can be read off. Let the inward normals to its facets $\cF_1,\ldots, \cF_d$ be $v^1,\ldots, v^d$. Because the space of normal vectors to each facet of ${\mathrm{\Delta}}$ is generated by a vector in $\bZ^n$, we can always choose each $v^j$ to be a primitive integral vector that is the inward pointing normal vector to the facet. There is a standard open covering of $M$, corresponding to vertices of ${\mathrm{\Delta}}$, and transition functions, which are obtained by the following pieces of information:

\begin{itemize}[\textbullet]
\item Any subset $J \subseteq \{1,\ldots,d\}$ indexes orbits $\bT^d_{\bC} \cdot(z_1,\ldots,z_d)\subset \bC^d$ such that $z_{j} =0$ when $j \in J$.
\item Each vertex $v$ of the moment polytope is the intersection $\cF_{j_1} \cap \cdots \cap \cF_{j_n}$ of $n$ facets of the polytope ${\mathrm{\Delta}}$ so corresponds to an index set $J_v :=(j_1,\ldots, j_n)\subset \{1,\ldots, d\}$.   
\item Let $p^{J_v} := \mu_a^{-1}(\cF_{j_1} \cap \ldots \cap \cF_{j_n})$ be a toric fixed point of the residual $\bT^d_\bC/N_\bC \cong \bT^n_\bC$-action on $M$, obtained by quotienting $U_a \subset \bT^d_\bC$ by the action of $N_\bC$. 
\item $(\bC^*)^n$ is the dense open $\bT^n_\bC$-orbit corresponding to the $J=\emptyset$ orbit, modulo $N_\bC$, in the quotient $U_a/N_\bC=M$. 
\item For the index set $J_v$, the primitive inward pointing normal vectors $v^{j_k}$, $k=1,\ldots, n$, give the $j_k^{\textrm{th}}$ column of $B$ for each $k$. Different choices of $B$ correspond to $\mathrm{GL}(n, \bZ)$ transformations of ${\mathrm{\Delta}}$.
\item $\alpha_\bC^{J_v}: \bT_\bC^n \to \bT^d_\bC$ passes to an embedding $\widetilde{\pi}_a \circ \alpha_\bC^{J_v}: \bT_\bC^n \xrightarrow{\alpha_\bC^{J_v}} U_a \xrightarrow{\widetilde{\pi}_a} M$ into an open $\bC^n$ set centered at $p^{J_v}$ in $M$, which we may denote $U_{J_v} \subset M$:
\begin{itemize}[$\circ$]
\item $V_{J_v}$ consists of points $[z_1:\ldots:z_d] \in U_a/N_\bC=M$ such that the components of $(z_{j_1},\ldots,z_{j_n})\in \bC^n$ may go to 0 and the other homogeneous coordinates are always nonzero, 
$$V_{J_v} :=\{[z_1:\ldots:z_d]\in U_a/N_\bC=M \mid z_j\neq 0 \text{ if } j \notin J_v \}, $$ 
\item we can express transition functions in terms of the $y_k^{J_v}$ on the overlap of two $\bC^n$ charts; for example in $\bP^2$ in Figure (\ref{subfig: standard P2}) corresponding to the vertex at the right angle we have $\alpha_\bC(t_1,t_2) = (t_1,t_2,1)$ extends to $\bC^2$ and passing to the quotient we obtain $\bC^2 \hookrightarrow \{[t_1:t_2:1] \in \bP^2 \mid (t_1,t_2) \in \bC^2 \} \subset \bP^2$, where the transition function is defined on $\bC \times \bC^*$ (for more details see Example \ref{ex: P2}),
\item we choose the unique representative for each $[z_1:\ldots: z_d]\in V_{J_v} $ which has been scaled to have 1's in all the $d-n$ strictly nonzero coordinates, 
\item the remaining scaled coordinates are the affine (inhomogeneous) coordinates $y^{J_v}_1,\ldots,y^{J_v}_n$ on $U_{J_v}$, 
\end{itemize}
\item the $V_{J_v}$ with charts $U_{J_v}$ give a standard open cover of $M$ indexed by the toric fixed points $v$, in bijection with vertices of the moment polytope at the intersection of the $J_v^{\textrm{th}}$ facets, and
\item transition functions on the dense open orbit of $M$ are of the form $ \alpha_\bC^{J_{v}}\circ  (\alpha_\bC^{J_{\tv}})^{-1}$, where $\bT^n_\bC$ embeds into each $U_{J_v}$ under the corresponding $\alpha_\bC$ and we may use coordinates $(y_1^{J_v},\ldots,y_n^{J_v})$ and $(y_1^{J_{\widetilde v}},\ldots,y_n^{J_{\widetilde v}})$ to express the transition maps, again see Example \ref{ex: P2} for the case of $\bP^2$.
\end{itemize}

\subsubsection{Examples.}
\begin{example}[$\bP^n$]\label{ex: Pn chart}
We continue Example \ref{ex: s_quot_mom_map}. Consider the vertex $v$ of ${\mathrm{\Delta}}$ given by $(\xi_1,\ldots, \xi_n)=(0,\ldots ,0)$, i.e. $v = \cF_1\cap \cdots \cap \cF_n$ is the intersection of the first $n$ facets whose normal vectors are the first $n$ column vectors of $B$ in Equation \eqref{eq: B Pn}.  The $\bC^n$-chart corresponding to this vertex is then 
\[
\begin{array}{ll}
V_{J_v}&=\{[z_1:\cdots : z_{n+1}] \in \bP^n \mid z_{n+1}\neq 0\} \\
&=\left\{ \left[y_1=\frac{z_1}{z_{n+1}}: \cdots: y_n=\frac{z_n}{z_{n+1}}: 1\right] \in \bP^n\right\},
\end{array}
\]
and $\varphi^{J_{v}}: V_{J_v}\to  \bC^n$ given by $[z_1:\cdots:z_{n+1}]\mapsto (y_1,\ldots y_n)$ is a biholomorphic map, with $y=0$ being a fixed point of the $\bT^n_\bC$-action.  The image $\varphi^{J_v}(V_{J_v})=\bC[y_1,\ldots, y_n]$ can be identified with the affine subspace 
\[U_{J_v}=\{(z_1,\ldots z_n, 1)\}\subset \bC^{n+1}
\]
with $y_k=z_k$ for $k=1,\ldots,n$.

Following the diagram in Equation \eqref{eq: alpha diagram}, we see that the map $\alpha_\bC$ chosen in Equation \eqref{eq: alpha Pn} gives an embedding of $\bT^n_\bC$ in $\bP^n$ by 
\[
\bT^n_\bC \overset{\alpha_\bC}{\hookrightarrow} U_{J_v} \rightarrow \bP^n, \quad
(t_1,\ldots, t_n) \mapsto (t_1,\ldots, t_n, 1) \mapsto [t_1:\cdots : t_n :1].
\]
We see that $\alpha_\bC$ is an isomorphism onto $U_{J_v}\backslash \{z_k=0\}_{k=1}^n$. This choice of $\alpha_\bC$, along with the identification of $U_{J_v}$ with $\bC[y_1,\ldots, y_n]$, gives the relation $t_k=y_k$ between the complex toric coordinates $t$ and the inhomogeneous coordinates $y$, which illustrates the identification stated in Equation \eqref{eq: alpha affine chart}.  This identification, along with the notations given in Definition \ref{def: toric coordinates}, $t_j=e^{u_j}=e^{x_j+i\theta_j}$, we can write Equation \eqref{eq:Pn_mom_map} as
 \[
\mu_a(y_1,\ldots, y_n) = (\xi_1,\ldots, \xi_n)=\frac{a(|y_1|^2,\ldots, |y_n|^2)}{|y_1|^2+\cdots+|y_n|^2+1}=\frac{a(e^{2x_1},\ldots, e^{2x_n})}{ e^{2x_1} + \cdots + e^{2x_n} +1},
\]
so
\[
(x_1,\ldots, x_n)= \frac{1}{2} \Big(\log\xi_1 - \log (a-\xi_1-\cdots-\xi_n), \ldots, \log \xi_n-\log (a-\xi_1-\cdots-\xi_n)\Big).
\]
\qed
\end{example}

\begin{example}[$\bP^2$] \label{ex: P2}  We specialize the above example to $\bP^{2}$ to further demonstrate the change of coordinates between charts.  The last paragraph of this example illustrates the change of basis of $\bT^2_\bC$.  Using the matrix $B$ we chose in Equation \eqref{eq: B Pn}, below we list all possible matrices $A$ such that $BA=\bI_{n}$ and its corresponding map $\alpha_\bC$ 
$$
B = \left[ \begin{array}{rrr} 1 & 0 & -1 \\ 0 & 1 & -1 \end{array}\right],\quad
A = \left[ \begin{array}{cc} b & c\\  b& c\\  b & c \end{array}\right]+\left[ \begin{array}{cc} 1 & 0\\  0& 1\\  0 & 0 \end{array}\right], \quad \alpha_\bC(t_1,t_2)=(t_1^{b+1}t_2^c,t_1^bt_2^{c+1},t_1^bt_2^c),
$$
where $b,c \in \bZ$.

Like we discussed in Example \ref{ex: s_quot_mom_map}, the moment polytope is the same as pictured in Figure (\ref{subfig: standard P2}).  At each vertex $v$, there is a choice of $b,c\in \bZ$ such that the above formula gives $\alpha_\bC^{J_v}$, which gives rise to the  standard $\bT^2_\bC$-action on the $\bC^2$-chart at that vertex.  Below we compare the $\bC^2$-charts at the vertices indexed by $\{1,2\}$ and $\{1,3\}$, respectively.

The vertex $\{1,2\}$ is at the lower left corner of the of the triangle, corresponding to the intersection of the two facets normal to columns 1 and 2 of $B$.  The chart at this vertex is the same as the one presented in Example \ref{ex: Pn chart}.  We have
\[
\begin{array}{l}
\varphi^{\{1,2\}}: V_{\{1,2\}}= \{[z_1:z_2: z_3]\in \bP^2\mid z_3\neq 0\} \longrightarrow  U_{\{1,2\}}=\{(z_1, z_2, 1)\in \bC^3\},\\
 \varphi^{\{1,2\}}([z_1: z_2: z_3])=\left(y_1^{\{1,2\}},y_2^{\{1,2\}}\right)=\left(\frac{z_1}{z_3}, \frac{z_2}{z_3}\right),\\
\alpha_\bC^{\{1,2\}}: \bT^2_\bC \xrightarrow{\ \ \cong\ \ } U_{\{1,2\}}\backslash \{z_j=0\}_{j=1,2}=\bC\left[y_1^{\{1,2\}},y_2^{\{1,2\}}\right]\backslash \left\{y_k^{\{1,2\}}=0\right\}_{k=1,2} \\
\alpha_\bC^{\{1,2\}}(t_1, t_2)=(t_1, t_2,1) \Longrightarrow \left(t_1=y_1^{\{1,2\}},  t_2= y_2^{\{1,2\}}\right).
\end{array}
\]
For vertex $\{1,3\}$, which is at the top of the triangle, we have
\[
\begin{array}{l}
\varphi^{\{1,3\}}: V_{\{1,3\}}= \{[z_1:z_2: z_3]\in \bP^2\mid z_2\neq 0\} \longrightarrow  U_{\{1,3\}}=\{(z_1, 1, z_3)\in \bC^3\},\\
 \varphi^{\{1,3\}}([z_1: z_2: z_3])=\left(y_1^{\{1,3\}},y_2^{\{1,3\}}\right)=\left(\frac{z_1}{z_2}, \frac{z_3}{z_2}\right),\\
\alpha_\bC^{\{1,3\}}: \bT^2_\bC \xrightarrow{\ \  \cong\ \  }U_{\{1,3\}}\backslash \{z_j=0\}_{j=1,3}=\bC\left[y_1^{\{1,3\}},y_2^{\{1,3\}}\right]\backslash \left\{y_k^{\{1,3\}}=0\right\}_{k=1,2},\\
\alpha_\bC^{\{1,3\}}(t_1, t_2)=(t_1t_2^{-1}, 1, t_2^{-1}) \Longrightarrow \left(t_1t_2^{-1}=y_1^{\{1,3\}},  t_2^{-1}= y_2^{\{1,3\}}\right).
\end{array}
\]
Using the identification between the complex toric coordinates $t$ with the inhomogeneous coordinates $y$ on the two charts above, we can see that the coordinate change between these two charts is given by 
\[
y_1^{\{1,2\}}= y_1^{\{1,3\}}\left(y_2^{\{1,3\}}\right)^{-1},\quad y_2^{\{1,2\}}=\left(y_2^{\{1,3\}}\right)^{-1}
\]
on the overlap 
\[
\varphi^{\{1,3\}}\left(V_{\{1,2\}}\cap V_{\{1,3\}}\right)=\{(z_1, 1, z_3) \mid z_3\neq 0\}=\bC\left[y_1^{\{1,3\}}, y_2^{\{1,3\}}\right]\backslash\{y_2^{\{1,3\}}= 0\}.
\]
The above transition map is precisely $\alpha_\bC^{\{1,2\}}\circ \left(\alpha_\bC^{\{1,3\}}\right)^{-1}$, except that $\left(\alpha_\bC^{\{1,3\}}\right)^{-1}$ is only defined on the $(\bC^*)^2$, and the above overlap set is $\bC\times \bC^*$, which is slightly bigger.  So the transition map is a slight extension of the map defined using the $\alpha_\bC$'s.

In this paragraph, we illustrate the change of $\bT^2_\bC$-basis.  Changing the basis of the range of $B$, which is $\mathfrak t^2=\Lie(\bT^2_\bC)$, is equivalent to left multiplying $B$ by an element in $\mathrm{GL}(2,\bZ)$ with determinant $\pm 1$.  For example, taking  $ \left[ \begin{array}{cc} 1 & 0 \\ 1 & 1   \end{array}\right]\in \mathrm{GL}(2,\bZ)$, we get a new   $B' = \left[ \begin{array}{rrr} 1 & 0 & -1 \\ 1 & 1 & -2 \end{array}\right]$.  Then the corresponding moment polytope is a triangle that is slanted, with inward normals given by columns of $B'$, as in Figure (\ref{subfig: slanted P2}).
\qed
\end{example}

\subsection{Justification of choices for $K_M$}\label{sec: KM justification} To determine $B^+$ and $Q^+$, we start by considering a vertex $v$ of the moment polytope ${\mathrm{\Delta}}$ for $M$. As in Section \ref{sec: holom chart}, this corresponds to an index $J_v$ of size $n$. After possible renumbering, we may assume $J_v=(1,\ldots,n)$. In particular, we have a $\bZ$-basis 
$$\mathcal{B}:=\{v^1,\ldots,v^n\} \subset \bZ^n$$
for $\bZ^n$ consisting of inward normals to the first $n$ facets. Therefore each of the remaining inward normals $v^{n+1},\ldots,v^d$ can be expressed as a linear combination of vectors in the basis $\mathcal{B}$ via an $n\times (d-n)$ matrix $C=(c_{jk})_{1 \leq j \leq n, 1 \leq k \leq d-n}$, where the $k^{\textrm{th}}$ column expresses $v^{n+k}$ as a vector with respect to $\mathcal{B}$, namely
\begin{equation}\label{eq:Cmatrix}
v^{n+k} = \sum_{j=1}^n c_{jk}v^j.
\end{equation}

Now we would like to answer the question, \emph{what are the inward normals to the polytope ${\mathrm{\Delta}}^+$ for $K_M$?} These make up the columns of $B^+$. Recall the correspondence on $M$ between vertices of $\mathrm{\Delta}$, multi-indices $J$, and coordinate charts $U_{J}$, from Subsection \ref{sec: holom chart}. Let $y_1, \ldots, y_n$ denote coordinates on one coordinate chart corresponding to $J=J_v$ centered around toric fixed point $q\in M \hookrightarrow K_M$ and $\tilde{y}_1,\ldots, \tilde{y}_n$ coordinates on another coordinate chart corresponding to choices with a tilde. In $K_M$, each chart has an additional coordinate $y_q$ and $y_{\tilde{q}}$. First let $D$ denote the $n \times n$ change of basis matrix between inward normals corresponding to each chart of $M$, namely between $\mathcal{B}$ and $\{\tilde{v}^{1},\ldots,\tilde{v}^{n}\}$ (the inward normals to the facets at the intersection of the second vertex): 
\begin{equation}\label{eq: v tilde}
\tilde{v}^{j}=\sum_{k=1}^n d_{jk}v^{k} .
\end{equation}
where now the $j^{\mathrm{th}}$ row expresses $\tilde{v}^j$ as a linear combination of the vectors $v^k$. Note that $D=(d_{jk}) \in \mbox{GL}(n, \bZ)$ has determinant 1 or $-1$ for the same reason as in Equation \eqref{eq: vertex transition}.  If it has determinant $-1$, interchange $\tilde{v}^1$ and $\tilde{v}^2$; this way we may assume $\det D=+1$. Under the identifications of Equation \eqref{eq: moment map identification}, the transition functions turn the coefficients into exponents so that in coordinates
\begin{equation}\label{eq:coord_change}
y_k = \prod_{j=1}^n \tilde{y}_j^{d_{jk}}, \qquad k=1, \ldots, n.
\end{equation}
Note that if $D$ is the change of basis on vectors, then $(D^{-1})^T$ is the change of coordinates. 

An element locally trivializing $K_M$ may be written $y_q dy_1 \wedge \cdots \wedge dy_n$. On the overlap of two charts, we have two different sets of coordinates with which to express a point: 
\begin{equation}\label{eq:KM_trans_fn}
    y_q dy_1 \wedge \cdots \wedge dy_n = y_{\tilde{q}} d\tilde{y}_1 \wedge \cdots \wedge d\tilde{y}_n.
\end{equation}
Taking the log of Equation \eqref{eq:coord_change} and differentiating, we obtain
\begin{equation}
\begin{aligned}
    \log y_k = \sum_{j=1}^n d_{jk} \log \tilde{y}_j & \implies \frac{dy_k}{y_k} = \sum_{j=1}^n d_{jk} \frac{d\tilde{y}_j}{\tilde{y}_j}\\
    & \implies \frac{dy_1}{y_1} \wedge \cdots \wedge \frac{dy_n}{y_n} = \frac{d\tilde{y}_1}{\tilde{y}_1} \wedge \cdots \wedge \frac{d\tilde{y}_n}{\tilde{y}_n},
    \end{aligned}
\end{equation}
where the last equality holds because $\det D = +1$. We can now apply this to Equation \eqref{eq:KM_trans_fn} to determine $y_{\tilde{q}}$.
\begin{equation}\label{eq: KM change of basis}
\begin{split}
y_q dy_1 \wedge \cdots \wedge dy_n &= y_{\tilde{q}} d\tilde{y}_1 \wedge \cdots \wedge d\tilde{y}_n=y_{\tilde{q}}dy_1 \wedge \cdots \wedge dy_n  \frac{\tilde{y}_1\cdots \tilde{y}_n}{y_1\cdots y_n} \\
 \implies y_q & = y_{\tilde{q}} \frac{\tilde{y}_1\cdots \tilde{y}_n}{y_1\cdots y_n} =  y_{\tilde{q}} \prod_{j=1}^n \tilde{y}_j^{1-\sum_{k=1}^n d_{jk}}
\end{split}
\end{equation}
by Equation \eqref{eq:coord_change}. We now have the information needed to write down $B^+$, which we prove below. 

\begin{lemma}\label{lem: B+} Given a matrix $B$ for $M$ of the following form:
\begin{equation}
B= \left[\begin{array}{c|c} \mathbb{I}_{n} &  C \end{array}\right],
\end{equation}
the following gives a choice of $B^+$ for $K_M$:
\begin{equation} 
B^+_{(n+1) \times (d+1)}  = \left[\begin{array}{c|c|c} \mathbb{I}_{n} &  C &  \mathbf{0}\\
\hline 0 &  1 - \sum_{j=1}^n c_{j,1} \ldots 1 - \sum_{j=1}^n c_{j,d-n} &  1 \end{array}\right].
\end{equation}
\end{lemma}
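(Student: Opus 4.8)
The plan is to check that the displayed $B^+$, paired with the matrix $Q^+$ of Equation~\eqref{eq: Q+ and B+}, satisfies $B^+Q^+=0$ and is a legitimate defining matrix, i.e. that the induced $(d\beta^+_\bC)_1$ makes the sequence $0\to\fn_\bC\to\ft^{d+1}_\bC\to\ft^{n+1}_\bC\to 0$ exact over $\bZ$, with $B^+$ surjective and having primitive integral columns. Since $Q^+$ (equivalently $\rho^+_\bC$) has been fixed precisely so that $\bC^{d+1}/\!\!/N=K_M$, any matrix satisfying these conditions is by definition ``a choice of $B^+$ for $K_M$.'' Along the way I would record that the new entries $1-\sum_{j=1}^n c_{j,\ell}$ in the bottom row of $B^+$ are exactly the exponents forced by the canonical-bundle transition function $y_q=y_{\tilde q}\prod_{j=1}^n\tilde y_j^{\,1-\sum_k d_{jk}}$ derived in \eqref{eq: KM change of basis}, which is the geometric meaning of the lemma.

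First I would pin down $Q$ in terms of $C$. Because $B=[\mathbb I_n\mid C]$ is surjective, its integral kernel is the saturated rank-$(d-n)$ sublattice $\{(-Cy,y):y\in\bZ^{d-n}\}$, so (up to right multiplication by $\mathrm{GL}(d-n,\bZ)$, which is irrelevant here since appending the row in \eqref{eq: Q+ and B+} is compatible with such multiplication) one may take $Q=\binom{-C}{\mathbb I_{d-n}}$; that is, $Q^\ell_k=-c_{k\ell}$ for $k\le n$ and $Q^\ell_{n+\ell'}=\delta_{\ell\ell'}$. Substituting into the formula for $Q^+$, the appended bottom row has $\ell$-th entry $-\sum_{k=1}^d Q^\ell_k=-\bigl(\sum_{j=1}^n(-c_{j\ell})+1\bigr)=\sum_{j=1}^n c_{j\ell}-1$, so $Q^+$ is $Q$ stacked over the row $(\sum_j c_{j,1}-1,\dots,\sum_j c_{j,d-n}-1)$.

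Next I would compute $B^+Q^+$ in blocks. Its top $n$ rows form $[\mathbb I_n\mid C\mid\mathbf 0]$, and $[\mathbb I_n\mid C]\binom{-C}{\mathbb I_{d-n}}=-C+C=0$ while the $\mathbf 0$-column annihilates the new bottom row of $Q^+$; its last row is $\bigl(\mathbf 0,(1-\sum_j c_{j,\ell})_\ell,1\bigr)$, and pairing it with the $\ell'$-th column of $Q^+$ gives $(1-\sum_j c_{j\ell'})+1\cdot(\sum_j c_{j\ell'}-1)=0$. Hence $B^+Q^+=0$. Since $Q^+$ has rank $d-n$ (it contains the $\mathbb I_{d-n}$ block) and $\dim\ker B^+=(d+1)-(n+1)=d-n$, the $+$-version of \eqref{eqn:ntt} is exact over $\bR$; saturation of the column lattice of $Q^+$ — again immediate from that same block — promotes this to exactness over $\bZ$.

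Finally I would verify that $B^+$ is a genuine defining matrix: its columns $1,\dots,n$ together with its last column are $e_1,\dots,e_n,e_{n+1}$, a $\bZ$-basis of $\bZ^{n+1}$, so $B^+$ is surjective over $\bZ$; and every column is primitive — the first $n$ and the last trivially, and the $(n+\ell)$-th column is $\bigl(v^{n+\ell},\,1-\sum_j c_{j\ell}\bigr)^{\!T}$, where $v^{n+\ell}$ (the $\ell$-th column of $C$, the primitive inward normal of $\cF_{n+\ell}$) is primitive in $\bZ^n$, and any integral vector whose first $n$ coordinates form a primitive vector is itself primitive. I expect the only real obstacle to be bookkeeping: one must be careful that the ``standard'' $Q$ attached to $B=[\mathbb I_n\mid C]$ is $\binom{-C}{\mathbb I_{d-n}}$ (and not a sign- or transpose-variant), and that the appended row in \eqref{eq: Q+ and B+} then reproduces exactly the exponents $1-\sum_j c_{j\ell}$ dictated by the transition function of \eqref{eq: KM change of basis} — i.e. that the purely linear-algebraic conditions checked above genuinely encode the canonical-bundle structure, which is what singles this $B^+$ out as the one for $K_M$.
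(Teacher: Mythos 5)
There is a genuine gap, and it is one of logical direction rather than computation: your argument is circular in the context where the lemma sits. You take as given that the $Q^+$ of Equation~\eqref{eq: Q+ and B+} (equivalently $\rho^+_\bC$) ``has been fixed precisely so that $\bC^{d+1}/\!\!/N=K_M$'' and then show the displayed $B^+$ is compatible with it. But the whole point of Section~\ref{sec: KM justification} — announced back in Section~\ref{subsec: KM} with ``we will provide the justification that this extension $\rho_\bC^+$ determines a $N$-action on $\bC^{d+1}$ such that the symplectic reduction $\bC^{d+1}/\!\!/N$ is $K_M$, but for now we take this fact for granted'' — is to \emph{derive} that extension, not assume it. The paper's proof therefore runs in the opposite order: it starts from the geometric structure of $K_M$ (its known holomorphic transition functions as a line bundle over $M$, Equation~\eqref{eq:KM_trans_fn}), shows via Equation~\eqref{eq: KM change of basis} that the fiber coordinate transforms with exponents $1-\sum_{j}d_{jk}$, identifies the relevant row of $D$ with a column of $C$ to get $v^{n+k,+}=\sum_{j=1}^n c_{jk}v^{j,+}+(1-\sum_{j=1}^n c_{jk})v^{d+1,+}$, and thereby \emph{computes} the last row of $B^+$ from scratch. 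The subsequent Corollary and Remark then deduce $Q^+$ from $B^+$ and justify Equation~\eqref{eq: Q+ and B+}, which is the $\rho^+_\bC$ one started with. Your proof inverts this: it uses Equation~\eqref{eq: Q+ and B+} as its input, so it cannot serve as the justification that Equation~\eqref{eq: Q+ and B+} is the right thing.

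You do flag at the end that one must check ``that the purely linear-algebraic conditions checked above genuinely encode the canonical-bundle structure, which is what singles this $B^+$ out as the one for $K_M$,'' but you file this under ``bookkeeping'' — in fact it is the entire content of the lemma. The block matrix manipulations you carry out are correct (with $Q=\binom{-C}{\mathbb I_{d-n}}$ one does get $B^+Q^+=0$, the appended entries $\sum_j c_{j\ell}-1$ and $1-\sum_j c_{j\ell}$ cancel, and the rank/primitivity checks go through), and they make a fine consistency check. But as a proof of the lemma in its intended role they prove the weaker statement ``the displayed $B^+$ is one valid completion of the exact sequence for the given $Q^+$,'' which is not the same as showing the column vectors of $B^+$ are the inward normals of the moment polyhedron of $K_M$. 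To close the gap you would need to independently establish what the canonical-bundle transition functions force, which is precisely the derivation from Equation~\eqref{eq:coord_change} through Equation~\eqref{eq: KM change of basis} that the paper's proof carries out.
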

\begin{corollary} One choice of $Q^+$ satisfying $B^+Q^+=0$, for the $B^+$ in Lemma \ref{lem: B+} for $K_M$, is
\begin{equation}
    Q^+_{(d+1) \times (d-n)} := \left[\begin{array}{ccc} & -C  & \\ \hline & \mathbb{I}_{d-n} & \\ \hline \left( \sum_{j=1}^n c_{j,1}\right) -1  & \ldots & \left( \sum_{j=1}^n c_{j,d-n}\right) -1 \end{array} \right]
\end{equation}
and one choice of $A^+$ satisfying $B^+A^+ = \mathbb{I}_{n+1}$ is
\begin{equation}
    A^+_{(d+1) \times (n+1)} := \left[\begin{array}{c|c}
        \mathbb{I}_n & \mathbf{0} \\
        \hline \mathbf{0} & \mathbf{0}\\
       \hline \mathbf{0}  & 1
    \end{array} \right].
\end{equation}
\end{corollary}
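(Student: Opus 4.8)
The statement to establish (the Corollary, together with the Lemma it rests on) is really two matrix identities, $B^+Q^+=0$ and $B^+A^+=\mathbb{I}_{n+1}$, for the explicit $B^+$ of Lemma \ref{lem: B+}; the plan is to verify these by block arithmetic, and to organize the argument so that the same computation also certifies that this $B^+$ is a legitimate choice. I would first fix the natural charge matrix for $M$ attached to the normalized form $B=[\mathbb{I}_n\mid C]$: since $\ker B=\{(-Cx,x)\colon x\in\bR^{d-n}\}$, the matrix $Q=\left[\begin{smallmatrix}-C\\ \mathbb{I}_{d-n}\end{smallmatrix}\right]$ has $BQ=0$ with columns spanning $\ker B$, so it is admissible; its canonical extension $Q^+$ in the sense of Section \ref{subsec: KM} (appending the row $(-\sum_{k}Q^\ell_k)_\ell$) has last row $\bigl(\sum_j c_{j,1}-1,\dots,\sum_j c_{j,d-n}-1\bigr)$, which is precisely the $Q^+$ of the Corollary. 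Then $B^+Q^+=0$ is a $2\times 3$ block multiplication --- the first $n$ row-blocks give $\mathbb{I}_n(-C)+C\,\mathbb{I}_{d-n}=0$, and the last row gives $\bigl(1-\sum_j c_{j,k}\bigr)_k\mathbb{I}_{d-n}+\bigl(\sum_j c_{j,k}-1\bigr)_k=0$ --- and $B^+A^+=\mathbb{I}_{n+1}$ is then immediate from the shapes of the two matrices.

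To certify Lemma \ref{lem: B+} itself --- that this $B^+$ is an admissible matrix of $(d\beta^+_\bC)_1$ whose columns are the primitive inward normals of the moment polyhedron ${\mathrm{\Delta}}^+$ of $K_M$ --- I would argue as follows. Admissibility as a matrix form of $(d\beta^+_\bC)_1$ for the fixed extended action $\rho^+_\bC$ is equivalent to $B^+Q^+=0$ (just checked) together with surjectivity of $B^+$ onto $\bZ^{n+1}$, which holds because $e_1,\dots,e_n$ and $e_{n+1}$ all occur among its columns; any two admissible choices differ by left $\mathrm{GL}(n+1,\bZ)$-multiplication (cf.\ the discussion after \eqref{eq: Q and B} in Section \ref{sec: cx quotient}). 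Primitivity of each column is also direct: the first $n$ and the last are standard basis vectors, while the $(n+k)$-th column $\bigl(c_{1,k},\dots,c_{n,k},1-\sum_j c_{j,k}\bigr)^{T}$ has coordinate sum $1$, hence is primitive. The tidiest packaging of both facts is to note that this $B^+$ equals $\left[\begin{smallmatrix}\mathbb{I}_n & \mathbf{0}\\ -(1\cdots1) & 1\end{smallmatrix}\right]$ times the ``all ones in the last row'' choice of $B^+$ from Section \ref{subsec: KM}, and the prefactor lies in $\mathrm{SL}(n+1,\bZ)$; hence $B^+$ is just another $\mathrm{GL}(n+1,\bZ)$-representative, i.e.\ corresponds to a different basis of $\bT^{n+1}_\bC$.

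Finally I would spell out the geometric meaning, which is where the work preceding the Lemma enters. Because $M\hookrightarrow K_M$ is the zero section (Remark \ref{subsec:connxn_M_KM}), the facet of ${\mathrm{\Delta}}^+$ on which the fiber coordinate vanishes is a copy of ${\mathrm{\Delta}}$, and the projection $\bR^{n+1}\to\bR^n$ forgetting the last coordinate carries the normals of the inherited facets of ${\mathrm{\Delta}}^+$ to the normals $v^j$ of ${\mathrm{\Delta}}$; this explains why $B$ occupies the first $n$ rows of the first $d$ columns of $B^+$, and why the last column is the inward normal $e_{n+1}$ to the zero-section facet. Since, moreover, the columns indexed by $1,\dots,n$ and $d+1$ are $e_1,\dots,e_{n+1}$, this $B^+$ is the representative adapted (Section \ref{sec: change Tn basis}) to the lift $\widehat v$ of the chosen vertex $v$, so on the chart of $K_M$ around $\widehat v$ the $\bT^{n+1}_\bC$-action is standard with the fiber coordinate as its $(n+1)$-st entry. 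The last-row entries $1-\sum_j c_{j,k}$ are then the fiber-direction components of the normals to the remaining facets, and these are fixed by how a local trivialization of $K_M$ transforms between charts: log-differentiating the base coordinate change \eqref{eq:coord_change}, using $\det D=+1$ to keep the top-form transition multiplicative, and solving for the fiber coordinate gives \eqref{eq: KM change of basis}, $y_q=y_{\widetilde q}\prod_j\widetilde y_j^{\,1-\sum_k d_{jk}}$; taking for $D$ the vertex-change matrix whose rows are supplied by \eqref{eq:Cmatrix}, $v^{n+k}=\sum_j c_{j,k}v^j$, turns the exponent $1-\sum_k d_{jk}$ into $1-\sum_j c_{j,k}$. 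The one genuinely delicate point --- and the reason the chain from \eqref{eq:Cmatrix} to \eqref{eq: KM change of basis} is carried out so carefully --- is pinning down the sign and the combinatorial form of these fiber-direction entries, namely that the canonical-bundle twist contributes $1-\sum_j c_{j,k}$ and not $\sum_j c_{j,k}-1$ or $1+\sum_j c_{j,k}$; everything else is formal bookkeeping, with the identity $B^+Q^+=0$ doubling as an off-by-one check.
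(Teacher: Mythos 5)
Your proof is correct: the two identities $B^+Q^+=0$ and $B^+A^+=\mathbb{I}_{n+1}$ are verified by the same block-matrix computation the paper implicitly intends (the Corollary is stated without a separate proof, being a routine check given the $B^+$ from Lemma \ref{lem: B+}). The additional material you include — recertifying Lemma \ref{lem: B+} and exhibiting the $\mathrm{SL}(n+1,\bZ)$ change of basis $\left[\begin{smallmatrix}\mathbb{I}_n & \mathbf{0}\\ -(1\cdots1) & 1\end{smallmatrix}\right]$ relating this $B^+$ to the ``all ones in the last row'' choice of Equation \eqref{eq: Q+ and B+} — is accurate and faithful to the paper's own reasoning, but goes beyond what the Corollary requires.
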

\begin{remark} Other choices of $Q^+$ differ from this one via right multiplication by an element in $\mathrm{GL}(d-n, \bZ)$, and they all satisfy the property that the sum of the entries in each column must be $0$, justifying the choice of $Q^+$ we presented in Equation \eqref{eq: Q+ and B+}.
\end{remark}

\begin{remark}
Note that the sum of the entries in a column of $B^+$ equals 1. Namely, the total weight of this $N_\bC$ action is 1. (We will see this play a role later on in the mirror symmetry section in that the $N_\bC$ action preserves $z_1\ldots z_dp$, which is the \emph{superpotential}. It also preserves the Calabi-Yau condition of $K_M$, in which case the above-mentioned $n$-form is global.) 
\end{remark}

\begin{proof} [Proof of Lemma \ref{lem: B+}]  In coordinates $(y_1,\ldots,y_n,y_q)$ on $K_M$, the Hamiltonian $\bT^{n+1}$-action restricts to the $\bT^n$-action on $M$ when $y_q=0$. In particular, this tells us that for $i_0:M \to K_M$, we have $i_0^* \mu_a^+ = \mu_a$ and $\mathrm{\Delta}^+ \cap \{\xi_{n+1}=\kappa_{d+1} \} = \mathrm{\Delta}$ (for $\kappa$ as defined in Equation \eqref{eq: facet}).  Thus to answer our question regarding the inward normals of $\mathrm{\Delta}^+$, we have that $\mathrm{\Delta}^+$ has $d+1$ facets whose normals are denoted by $v^{j, +}$, for $j=1,\ldots, d+1$.  For $j=1,\ldots, d$, the first $n$ coordinates of $v^{j,+}$ are the same as that of $v^j$, i.e. 
\begin{equation}\label{eq: Delta+ facets}
    v^{j,+} = (v^j, v_{n+1}^{j,+}),\qquad 1 \leq j \leq d,
\end{equation}
and below we will discuss how to determine the last coordinate, $v^{j, +}_{n+1}$ based on our changed of coordinates calculation above.  The last facet, which corresponds to $j=d+1$, is in the $\xi_{n+1}=\kappa_{d+1}$ plane, so its primitive inward normal vector is $v^{d+1,+}:=[0\ldots 0 \; 1]^T$.  This last facet intersects every vertex of $\mathrm{\Delta}^+$.  (See also Remark \ref{subsec:connxn_M_KM} and Equation \eqref{eq: mu a for KM} for discussions on relating $M$ and $K_M$.) This can be visualized in the case of $K_{\bP^1}$ in Figure \ref{fig: KP1 total}.

At a vertex $v^+$ of $\mathrm{\Delta}^+$ corresponding to $v$ for $\mathrm{\Delta}$, we hence have a basis of inward normal vectors 
\begin{equation}
\begin{aligned}
\mathcal{B}^+&:=\{v^{1,+},\ldots,v^{n,+}, v^{d+1,+}\}.
\end{aligned}
\end{equation}
With respect to this basis $\mathcal{B}^+$, by Equation \eqref{eq:Cmatrix} and Equation \eqref{eq: Delta+ facets} we can write $B^+$ as
$$\qquad B^+ = \left[\begin{array}{c|c|c} \mathbb{I}_{n} &  C &  \mathbf{0}\\
\hline \mathbf{0} &  v^{n+1,+}_{n+1} \ldots v^{d,+}_{n+1} &  1 \end{array}\right],
$$
since there is no change of basis in the last vector $v^{d+1,+}$ according to Equation \eqref{eq: KM change of basis}. To fill in the last row, it remains to write each $v^{n+k,+}$, $k=1,\ldots, d-n$, in terms of the basis vectors in $\mathcal{B^+}$. So for each $k$, we take $v^{n+k,+}$, let $\tilde v^{1,+}=v^{n+k, +}$, and extend it to a $\bZ$-basis $\{\tilde v^{1,+},\ldots, \tilde v^{n,+}, v^{d+1, +}\}$ of $\bZ^{n+1}$ corresponding to a vertex $\tilde{v}^+$.  Let $\tilde v=\{\tilde v^1,\ldots, \tilde v^n\}$ be the first $n$ coordinates of $\{\tilde v^{1,+},\ldots, \tilde v^{n,+}\}$, and we look at what Equation  \eqref{eq: KM change of basis} says for this choice of $\tilde v$.  So, $\tilde v$ is related to $v$ by Equation \eqref{eq: v tilde}.   Furthermore, because of the choice that $\tilde v^{1,+}=v^{n+k,+}$, by comparing Equation \eqref{eq: v tilde} and Equation \eqref{eq:Cmatrix},  we see that the first row $[d_{11},\ldots, d_{1n}]$ of $D$ is the same as the transpose of the $k$-th column $[c_{1k},\ldots, c_{nk}]$ of $C$, i.e. $d_{1j}=c_{jk}$ for $j=1,\ldots, n$. Hence we obtain
 \begin{equation}
\begin{split}
   v^{n+k, +}= \tilde{v}^{1,+}&= \sum_{j=1}^n d_{1j}v^{j,+} + \bigg(1-\sum_{j=1}^n d_{1j}\bigg)v^{d+1,+} \\
     &= \sum_{j=1}^n c_{jk}v^{j,+} + \bigg(1-\sum_{j=1}^n c_{jk}\bigg)v^{d+1,+}, \qquad k=1,\ldots, d-n,\\
    \end{split}
\end{equation}
where in the first line of the above calculation, the first term follows from Equation \eqref{eq: v tilde} and the second term follows from the coordinate change of $y_q$ in Equation \eqref{eq: KM change of basis}. This proves the lemma. 
\end{proof}

 \begin{example}[$K_{\bP^n}$]\label{ex: KPn} In Example \ref{ex:KPn_choice1}, we presented one choice of $B^+$. Following the prescription given above in this section, we obtain another choice for $B^+$ below, which differs from the previous one by an invertible linear transformation over $\bZ$. (These two different choices of $B^+$ correspond to two different sets of $\bT^{n+1}$-basis, so their moment polytopes look different;  see Figure (\ref{fig: KP1 straight}) for the case of $n=1$). One choice of $A^+$ such that $B^+A^+=\bI_{n+1}$ is given below 
\begin{equation}\label{eq:B+_straight}
   B^+ = \left[\begin{array}{ccc|c|c}
         && &-1&0\\
         & \mathbb{I}_{n}& & \vdots & \vdots \\
          &&& -1&0\\
        \hline 
        0 & \ldots & 0 & n+1& 1
    \end{array} \right], \qquad    A^+ := \left[\begin{array}{ccc|c}
         && &0\\
         & \mathbb{I}_{n}& & \vdots  \\
          &&& 0\\
        \hline
        0 & \ldots & 0 & 0\\
        \hline 
        0 & \ldots & 0 & 1
    \end{array} \right].
\end{equation}
\qed

\subsection{Moment map for $K_{\bP^n}$ in homogeneous coordinates} \label{subsec: KP1} 

Recall in  Example \ref{ex: KPn_setup}, we obtained $K_{\bP^n}$ as the quotient \[
K_{\bP^n}=U_a^+/\bC^*=(\mu^+_N)^{-1}(a)/U(1), 
\]
where $U_a^+=(\bC^{n+1}-\{0\})\times \bC$ and 
$$
{(\mu_N^+)}^{-1}({a}) =  \{ (z_1,\ldots,z_{n+1},p)\in \bC^{n+2} \mid |z_1|^2+\cdots + |z_{n+1}|^2 -(n+1) |p|^2 = 2a\},
$$
and note that we use $a>0$ as this is the chamber that gives $K_{\bP^n}$. For any $(z,p)\in U_a^+$, there is a unique $\lambda_a(z,p)\in \bR_{>0}$ such that $\lambda_a(z,p)\cdot (z,p)\in (\mu^+_N)^{-1}(a)$ under the action $\lambda_a\cdot(z,p)=(\lambda_a z, \lambda_a^{-n-1}p)$ defined by Equation \eqref{eq: Nc action KPn}.  This defines the following deformation retraction (same as Equation \eqref{eq: retraction KPn})
$$
R^+_a: (\bC^{n+1}-\{0\})\times \bC \rightarrow {(\mu_N^+)}^{-1}(a),\quad R^+_a(z,p) =   ( \lambda_a(z,p) z, \lambda_a(z,p)^{-n-1}p). 
$$

\subsubsection{The $\bT^{n+1}$-action on $K_{\bP^n}$ and its moment map.} Corresponding to the $A^+$ in Example \ref{ex: KPn}, we have the map $\alpha_\bC^+: \bT^{n+1}_\bC\to \bT^{n+2}_\bC$ given by 
  \[\alpha_\bC^+(t_1,\ldots,t_{n+1})=(t_1,\ldots t_n, 1, t_{n+1}),
\] 
and  we get a $\bT^{n+1}_\bC$-action on $\bC^{n+2}$ via $\alpha_\bC^+(\bT_\bC^{n+1})$ by
\[(t_1,\ldots, t_{n+1})\cdot (z_1,\ldots, z_{n+1},p) = (t_1 z_1,\ldots, t_n z_n, z_{n+1}, t_{n+1}p).
\]
As usual, this restricts to  a Hamiltonian $\bT^{n+1}$-action on $\bC^{n+2}$ in coordinates $z,p$ with the standard symplectic form $\frac{i}{2}\left(\sum_{k=1}^{n+1}dz_k\wedge d\bar{z}_k +dp\wedge d\bar{p}\right)$ and moment map (up to a constant)
$$\mu=(\alpha^+)^*\circ \mu_{\bT^{n+1}}: \bC^{n+2}\rightarrow \bR^{n+1}, \quad \mu(z_1,\ldots,z_{n+1},p) = \frac{1}{2}(|z_1|^2,\ldots, |z_n|^2, |p|^2).$$ 

Because this $\bT^{n+1}_\bC$-action on $U_a^+$ commutes with the 
$\bC^*$-action on $U_a^+$, we get that   $\bT^{n+1}_\bC$ acts on the quotient $K_{\bP^n}=U_a^+/\bC^*$ by
\begin{equation}\label{eq: Tn+1 action KPn}
(t_1,\ldots,t_n, t_{n+1})\cdot [z_1:\ldots: z_{n+1}:p]= [t_1 z_1:\ldots : t_n z_n: z_{n+1}:t_{n+1}p].
\end{equation}
This restricts to a Hamiltonian $\bT^{n+1}$-action on $(K_{\bP^n}, \omega_a)$.  Using Equation \eqref{eq: mu a pt2}, we find the moment map $\mu_a:K_{\bP^n} \to \bR^{n+1} $ is given by 
\begin{equation}\label{eq: moment map KPn}
\begin{array}{ll}
\mu_a([z_1,\ldots,z_{n+1},p])
&=\; \mu \circ R^+_a(z_1,\ldots,z_{n+1},p) \\
&=\; \frac{1}{2} (\lambda_a(z,p)^2|z_1|^2,\ldots, \lambda_a(z,p)^2 |z_n|^2, \lambda_a(z,p)^{-2(n+1)}|p|^2).
\end{array}
\end{equation}

\subsubsection{Computing $\lambda_a(z,p)^2$.} Now we try to compute $\lambda_a^2(z,p)$ in order to obtain a more explicit formula for $\mu_a$ in terms of the homogeneous coordinates $(z,p)$. Let 
$|z|^2=\sum_{k=1}^{n+1}|z_k|^2$. By setting 
$\mu_N^+(z_1,\ldots,z_{n+1},p) $ equal to $a$, 
we get that $x:= \lambda_a(z,p)^2|z|^2$ satisfies  
$$
x^{n+2}  - 2a x^{n+1} -(n+1)|z|^{2(n+1)}|p|^2 =0.
$$
So 
$$
|z|^{2(n+1)}|p|^2 = \frac{1}{n+1} (x^{n+2}-2a x^{n+1}) =:f(x).
$$
$$
f'(x) = \frac{n+2}{n+1} x^{n+1} - 2a x^n 
$$
$f'(x)>0$ for $x> \frac{2(n+1)a}{n+2}$. So there is a smooth function 
$$
g: \left(-\frac{2a}{(n+1)(n+2)} \left(\frac{2a(n+1)}{n+2}\right)^{n+1}, \infty\right) \longrightarrow \left(\frac{2(n+1)a}{n+2}, \infty\right) 
$$
which is the inverse of 
$$
f: \left(\frac{2(n+1)a}{n+2}, \infty\right) \longrightarrow \left(-\frac{2a}{(n+1)(n+2)} \left(\frac{2a(n+1)}{n+2}\right)^{n+1}, \infty\right).
$$
Furthermore, $g'(t)>0$, and 
$$
g(0)=2a,   \quad \lim_{t\to +\infty} g(t)= +\infty.
$$
Because $g$ is the inverse of $f$, we get that $x=g(|z|^{2(n+1)}|p|^2)$, and by definition 
\[
\lambda_a(z,p)^2=\frac{x}{|z|^2}=\frac{g(|z|^{2(n+1)}|p|^2)}{|z|^2}.
\]

Using the above formula for $\lambda_a(z,p)^2$ and continuing Equation \eqref{eq: moment map KPn} for the moment map $\mu_a$, we get 
\begin{equation}\label{eq: moment map KPn in g}
 \begin{array}{ll}
&\;\mu_a([z_1,\ldots,z_{n+1},p])
= \mu \circ R^+_a(z_1,\ldots,z_{n+1},p) \\\\
=&\; \left(\frac{g(|z|^{2(n+1)}|p|^2)}{2} \frac{|z_1|^2}{|z|^2},\ldots, 
\frac{g(|z|^{2(n+1)}|p|^2)}{2} \frac{|z_n|^2}{|z|^2}, \frac{1}{2(n+1)}(g(|z|^{2(n+1)}|p|^2) -2a) \right).
\end{array}
\end{equation}
Recall that $g(0)=2a$, so  
$$
\mu_a([z_1,\ldots, z_{n+1},0]) =  \frac{a}{|z_1|^2+\cdots + |z_{n+1}|^2} (|z_1|^2,\ldots, |z_n|^2,0).
$$
So when $p=0$ we obtain the same moment map as above in Example \ref{ex: s_quot_mom_map} for $\bP^n$. 

\subsubsection{Explicit formula for when $n=1$.} We next derive an explicit expression of the inverse function $g$ of $f$  when $n=1$. In this case, $x=\lambda(z,p)^2|z|^2>0$ satisfies
\begin{equation}\label{eq: cubic}
x^3 -2a x^2 -2|z|^4|p|^2 =0.
\end{equation}
If $p\neq 0$, let $y=x^{-1}$. Then $y$ is the unique real root of the following depressed cubic equation. 
$$
y^3 + \frac{a}{|z|^4|p|^2} y -\frac{1}{2|z|^4|p|^2} =0.
$$
By Cardano's formula,
\begin{eqnarray*}
y &=& \sqrt[3]{\frac{1}{4|z|^4|p|^2} + \sqrt{\frac{1}{16|z|^8 |p|^4}+\frac{a^3}{27|z|^{12}|p|^6}} } + 
 \sqrt[3]{\frac{1}{4|z|^4|p|^2} - \sqrt{\frac{1}{16|z|^8 |p|^4} + \frac{a^3}{27|z|^{12}|p|^6}} } \\
&=&  \frac{\sqrt{a}}{\sqrt{3}|z|^2|p|}\left( \sqrt[3]{ \sqrt{1 + \frac{27 |z|^4|p|^2}{16 a^3} } + \frac{ 3\sqrt{3} |z|^2|p|}{4 a\sqrt{a}} }
-\sqrt[3]{ \sqrt{1 + \frac{27 |z|^4|p|^2}{16 a^3} } - \frac{ 3\sqrt{3} |z|^2|p|}{4 a\sqrt{a}} } \right) \\
&=& \frac{1}{2a} \times  \frac{3}{    \left(\sqrt{1 + \rho_a(z,p)^2 } +\rho_a(z,p) \right)^\frac{2}{3}
 + \left(\sqrt{1 + \rho_a(z,p)^2 } - \rho_a(z,p) \right)^\frac{2}{3} +1  } 
\end{eqnarray*}
where 
$$ 
\rho_a(z,p)=\displaystyle{ \frac{3\sqrt{3} |z|^2|p|}{4a\sqrt{a}} }
$$ 
is invariant under the $\bC^*$-action on $(\bC^2-\{0\})\times \bC$, and $\rho_a(z,0)=0$.

Using the relations that $x=g(|z|^{2(n+1)}|p|^2)$ (here  $n=1$) and $x=y^{-1}$, we get that 
\begin{eqnarray*}
 g(|z|^4 |p|^2) &=&   \frac{2a}{3}\left( \left(\sqrt{1 + \rho_a(z,p)^2 } +\rho_a(z,p) \right)^\frac{2}{3}
 + \left(\sqrt{1 + \rho_a(z,p)^2 } - \rho_a(z,p) \right)^\frac{2}{3} +1 \right)\\
&=& 2a+  \frac{2a}{3} \left( \sqrt[3]{ \sqrt{1 + \frac{27 |z|^4|p|^2}{16 a^3} } + \frac{ 3\sqrt{3} |z|^2|p|}{4 a\sqrt{a}} }-\sqrt[3]{ \sqrt{1 + \frac{27 |z|^4|p|^2}{16 a^3} } - \frac{ 3\sqrt{3} |z|^2|p|}{4 a\sqrt{a}} } \right)^2. 
\end{eqnarray*}

We can then substitute this into Equation \eqref{eq: moment map KPn in g} to get 
\begin{align*}
 &\mu_a([z_1,z_2,p]) =  \left( \frac{g(|z|^4|p|^2)}{2} \frac{|z_1|^2}{|z_1|^2+|z_2|^2}, \frac{g(|z|^4|p|^2)-2a}{4} \right)=\\
 =& \Bigg(\frac{a |z_1|^2}{|z|^2} \times \frac{ \left(\sqrt{1 + \rho_a(z,p)^2 } +\rho_a(z,p) \right)^\frac{2}{3}
 + \left(\sqrt{1 + \rho_a(z,p)^2 } - \rho_a(z,p) \right)^\frac{2}{3} +1 } {3}, \\
 &  \frac{a}{2} \times  \frac{ \left(\sqrt{1 + \rho_a(z,p)^2 } +\rho_a(z,p) \right)^\frac{2}{3}
 + \left(\sqrt{1 + \rho_a(z,p)^2 } - \rho_a(z,p) \right)^\frac{2}{3} -2 } {3}\Bigg).
\end{align*}
The moment polytope, which is the image of this moment map $\mu_a$ is shown in Figure (\ref{fig: KP1 straight}). 

So far in this section, the $\bT^2$-action on $K_{\bP^1}$ is given by Equation \eqref{eq: Tn+1 action KPn}, which arose from the choice of $B^+$ and $A^+$ given in Example \ref{ex: KPn}.  If we change the basis of $\bT^2$ and use the $B^+$ and $A^+$ given in Example \ref{ex:KPn_choice1}, we get the $\bT^2$-action on $K_{\bP^1}$ given by
$$
(t_1,t_2)\cdot [z_1:z_2:p] = [t_1 z_1: z_2: t_1^{-1} t_2 p],
$$
then the moment map becomes
\begin{equation}
\begin{aligned}
\mu'_a([z_1,z_2,p])& = \left(\frac{g(|z|^4|p|^2)}{2} \frac{|z_1|^2}{|z_1|^2+|z_2|^2} - \frac{g(|z|^4|p|^2)-2a}{4}, \frac{g(|z|^4|p|^2)-2a}{4} \right)
\end{aligned}
\end{equation}
and its image is shown in Figure (\ref{fig: KP1}).
In particular, when $p=0$, 
$$
\mu'_a([z_1,z_2,0]) = \left(  \frac{a|z_1|^2}{|z_1|^2+|z_2|^2}, 0 \right).
$$
Note that the first component is the moment map for $\bP^1$, and indeed its image is the line segment connecting $(0,0)$ and $(a,0)$ in Figure (\ref{fig: KP1}) and Figure (\ref{fig: KP1 straight}).

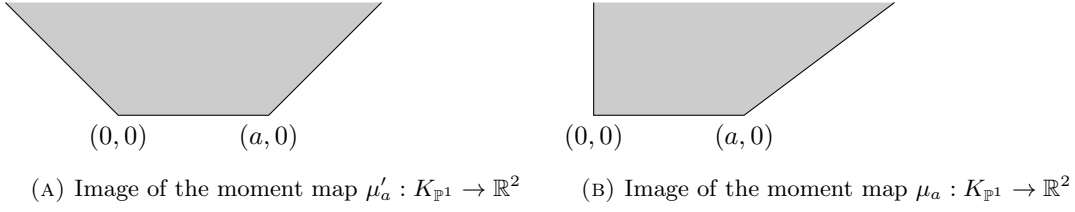
\begin{figure}[h]
\centering
\begin{subfigure}[b]{0.45\textwidth}
\begin{tikzpicture}
\draw[fill=gray!40] (-1.5,1.5) -- (0,0) -- (2,0) -- (3.5,1.5);
\node[below]  (0,0) {$(0,0)$};
\node[below] at (2,0) {$(a,0)$};
\end{tikzpicture}
\caption{Image of the moment map $\mu_a':K_{\bP^1}\to \bR^2$}
\label{fig: KP1}
\end{subfigure}
\begin{subfigure}[b]{0.45\textwidth}
\begin{tikzpicture}
\draw[fill=gray!40] (0,1.5) -- (0,0) -- (2,0) -- (4,1.5);
\node[below]  (0,0) {$(0,0)$};
\node[below] at (2,0) {$(a,0)$};
\end{tikzpicture}
\caption{Image of the moment map $\mu_a:K_{\bP^1}\to \bR^2$}
\label{fig: KP1 straight}
\end{subfigure}
\caption{Two moment polytopes for $K_{\bP^1}$}
\label{fig: KP1 total}
\end{figure}

\end{example}

\section{K\"ahler potential}\label{sec: kahler}
There is a natural K\"ahler potential that toric varieties admit from toric geometry, as follows. With the data of a moment polytope, they come equipped with an ample line bundle defined by a sum of the toric divisors weighted according to the moment polytope; hence they admit a K\"ahler potential induced from sections of that line bundle, see \cite[Proposition 4.3.3]{cls} and \cite[Example 4.1.2(i)]{huy} for details. The K\"ahler potential in this case is a function of the complex affine coordinates. However, many calculations become simpler in the moment map coordinates. We saw above the existence of a symplectic form $\omega_a$ coming from symplectic reduction. We will see in this section that the Legendre transform of the K\"ahler potential one obtains from symplectic reduction is computable. Although the two symplectic forms are different in general, they can have similar properties such as being in the same K\"ahler class.

More specifically, the focus of this chapter is to find a K\"ahler potential $F$ over $\mathring{{\mathrm{\Delta}}}$ for $\omega_a$ in terms of the affine toric coordinates, as well as its Legendre transform $G$ in terms of the action-angle coordinates, specifically the moment map coordinates. First we recall some notation. The diagram below is a summary of the identifications given by Equation \eqref{eq: action angle}, Equation \eqref{eq: cx toric}, and Equation \eqref{eq: moment map identification}, and the notations introduced in Definition \ref{def: toric coordinates},
\begin{equation}
\begin{array}{ccccccc}
\mathfrak t_\bC^n    & \xrightarrow{\exp} & \bT^n_\bC                    &  \cong & \mu^{-1}_a(\mathring{{\mathrm{\Delta}}})&\cong & \mathring{{\mathrm{\Delta}}}\times \bT^n\\
\vin                           &                              & \vin                               &            &        				         &	     &\vin\\
u=(u_j=x_j+i\theta_j)_{j=1}^n& \mapsto                & (e^{u_1},\ldots, e^{u_n})&           &					         &	     & (\xi, e^{i\theta})=(\xi_j, e^{i\theta_j})_{j=1}^n\\
 & & \rotatebox{90}{$=$} & & &&\\
    				&             			 & (t_1,\ldots t_n)&  & \longmapsto & & \mu_a(t)=\xi\in \mathring{{\mathrm{\Delta}}}

\end{array}.
\end{equation}
Note that since $\bT^n$ embeds in $\bT^n_\bC$ as $U(1)^n$, $\theta=(\theta_1,\ldots, \theta_n)$ is a coordinate on the Lie algebra of $\bT^n_\bC$.  The above diagram identifies $\mu_a^{-1}(\mathring{{\mathrm{\Delta}}})$ with $\bT^n_\bC$ and $\mathring{{\mathrm{\Delta}}}\times \bT^n$, and hence endows $\mu^{-1}_a(\mathring{{\mathrm{\Delta}}})$ with the complex toric coordinates $t$ and the action-angle coordinates $(\xi,\theta)$, respectively.  

{\bf K\"ahler potential $F$.} By \cite[Theorem 4.3]{G94}, there exists a $\bT^n$-invariant K\"ahler potential $F$ on $(\bC^*)^n \subset M$ such that ${\omega_a}|_{(\bC^*)^n}=2i \dd \overline{\dd}F$. Contracting the symplectic form with $\dd/\dd \theta$, using that $F$ is not a function of $\theta$ since it is $\bT^n$-invariant (that is, a function $\bR^n \to \bR$ on the norms of the affine toric coordinates of $(\bC^*)^n$),  plugging into the lefthand side of $\iota_{\dd/\dd \theta_j}\omega_a = -d\mu_{a,j}$, and integrating we see that $F:\bR^n \to \bR$ satisfies 
\begin{equation}\label{eq: moment map and F}
\mu_a(t_1, \ldots, t_n)=\mu_a( e^{x_1+i \theta_1},\ldots, e^{x_n+i \theta_n}) = \left( \frac{\partial F}{\partial x_1} (x),\ldots, \frac{\partial F}{\partial x_n}(x) \right). 
\end{equation}
Therefore by \cite[Theorem 3.3]{G94} and the sentence following it, the map $x \mapsto  \left( \frac{\partial F}{\partial x_1} (x),\ldots, \frac{\partial F}{\partial x_n}(x) \right)$
is a diffeomorphism from $\bR^n$ to the interior $\mathring{{\mathrm{\Delta}}}$ of the moment polytope ${\mathrm{\Delta}}$. In particular, let $\xi_j = \frac{\partial F}{\partial x_j}.$ Then
\begin{equation} \label{eq: Hessian F}
\frac{\partial \xi_j}{\partial x_k} = \frac{\partial^2 F}{\partial x_j \partial x_k}.
\end{equation}

{\bf Legendre transform $G$.} In the other direction, there is a function $G: \mathring{{\mathrm{\Delta}}}\to \bR$ such that the inverse map of the diffeomorphism
\begin{equation}\label{eq: cx to moment}
\bR^n\longrightarrow \mathring{{\mathrm{\Delta}}}, \quad (x_1,\ldots, x_n) \mapsto  \left( \frac{\partial F}{\partial x_1} (x),\ldots, \frac{\partial F}{\partial x_n}(x) \right)
\end{equation}
is
\begin{equation}\label{eq: moment to cx}
\mathring{{\mathrm{\Delta}}}\longrightarrow \bR^n,\quad (\xi_1,\ldots, \xi_n) \mapsto  \left( \frac{\partial G}{\partial\xi_1} (\xi),\ldots, \frac{\partial G}{\partial\xi_n}(\xi) \right)=(x_1,\ldots, x_n).
\end{equation}
Specifically, $G$ is the \emph{Legendre transform} of $F$,  namely (up to a constant) 
\begin{equation}\label{eq: Legendre}
F(x)+G(\xi)=\left<x,\xi \right>
\end{equation}
so that differentiating Equation \eqref{eq: Legendre} with respect to $\xi$ we see that $\frac{\partial G}{\partial\xi_i}=x_i$. Furthermore
\begin{equation}\label{eq: Hessian G}
\frac{\partial x_j}{\partial \xi_k} = \frac{\partial^2 G}{\partial \xi_j \partial \xi_k}.
\end{equation}

From the relations in Equations (\ref{eq: Hessian F}) and (\ref{eq: Hessian G}), we see that the Hessian of $G$ is the inverse of the Hessian of $F$. Also from Equations (\ref{eq: Hessian F}) and (\ref{eq: Hessian G}), we see that the complex structure in the $(x_j, \theta_j)$ coordinates is the standard 
$\left(\begin{array}{c | c}
0 & -\bI_n\\
\hline
\bI_n&0 
\end{array}
\right)$, hence the complex structure in the $(\xi_j, \theta_j)$ coordinates is $\left(\begin{array}{c | c}
0 & -[\partial_j \partial_k F]\\
\hline
[\partial_j\partial_k G]&0 
\end{array}
\right)$. We now discuss the K\"ahler form and metric.

Recall in Equation \eqref{eq: action angle}, $\mathring{{\mathrm{\Delta}}}\times \bT^n$ with the symplectic form $\sum_{j=1}^n d\xi_j \wedge d\theta_j$ is symplectomorphic to $\mu_a^{-1}(\mathring{{\mathrm{\Delta}}})$ with the symplectic form $\omega_a$. We explain why the symplectic form is standard in the action-angle coordinates.

\begin{lemma} The symplectic form for $M$ can be written (over the interior of the moment polytope) in the action-angle coordinates as $$\omega_a=\sum\limits_{j=1}^n d\mu_{a,j} \wedge d\theta_j.$$
\end{lemma}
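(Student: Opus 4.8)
The plan is to read $\omega_a$ off of Guillemin's K\"ahler potential $F$. Over $\mu_a^{-1}(\mathring{{\mathrm{\Delta}}})\cong(\bC^*)^n$ we have $\omega_a=2i\,\partial\overline{\partial}F$ by \cite[Theorem 4.3]{G94}, where $F=F(x)$ depends only on $x=\Re u$. Writing the holomorphic toric coordinates as $u_j=x_j+i\theta_j$ and using the Wirtinger operators $\partial_{u_j}=\tfrac12(\partial_{x_j}-i\partial_{\theta_j})$, $\partial_{\overline u_k}=\tfrac12(\partial_{x_k}+i\partial_{\theta_k})$, one has
\begin{equation*}
\partial\overline{\partial}F=\sum_{j,k}\frac{\partial^2 F}{\partial u_j\,\partial\overline u_k}\,du_j\wedge d\overline u_k,\qquad
\frac{\partial^2 F}{\partial u_j\,\partial\overline u_k}=\frac14\,\frac{\partial^2 F}{\partial x_j\,\partial x_k}=:\frac14 F_{jk},
\end{equation*}
the middle equality because $F$ is independent of $\theta$; note the matrix $F_{jk}$ is real and symmetric.

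First I would expand $du_j\wedge d\overline u_k=dx_j\wedge dx_k-i\,dx_j\wedge d\theta_k+i\,d\theta_j\wedge dx_k+d\theta_j\wedge d\theta_k$ and sum against the symmetric tensor $F_{jk}$: the $dx_j\wedge dx_k$ and $d\theta_j\wedge d\theta_k$ blocks cancel by symmetry of $F_{jk}$, while the two mixed blocks combine, giving $\omega_a=2i\,\partial\overline{\partial}F=\sum_{j,k}F_{jk}\,dx_j\wedge d\theta_k$. Second, by Equation \eqref{eq: moment map and F} we have $\mu_{a,k}=\xi_k=\partial F/\partial x_k$, so Equation \eqref{eq: Hessian F} gives $d\mu_{a,k}=d\xi_k=\sum_j F_{jk}\,dx_j$. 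Substituting this into the previous display yields
\begin{equation*}
\omega_a=\sum_{k}\Big(\sum_j F_{jk}\,dx_j\Big)\wedge d\theta_k=\sum_{k=1}^n d\mu_{a,k}\wedge d\theta_k,
\end{equation*}
which is the claim; in particular this recovers the normal form $\sum_j d\xi_j\wedge d\theta_j$ of Equation \eqref{eq: action angle} since $\xi_j=\mu_{a,j}$.

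The computation has no genuine obstacle; the only place to be careful is matching the normalization convention for $\partial\overline{\partial}$ and tracking signs in the Wirtinger expansion, and then using the symmetry of $F_{jk}$ to kill the pure $dx\wedge dx$ and $d\theta\wedge d\theta$ terms. As an alternative route avoiding the potential $F$ entirely: $\bT^n$-invariance of $\omega_a$ forces all coefficients to be $\theta$-independent; the moment map identity $\iota_{\partial/\partial\theta_m}\omega_a=-d\mu_{a,m}$ pins down the $dx\wedge d\theta$ block; the torus fibers $\{x=\mathrm{const}\}=\mu_a^{-1}(\xi)$ being Lagrangian kills the $d\theta\wedge d\theta$ block; and compatibility of $\omega_a$ with the complex structure $J\partial_{x_j}=\partial_{\theta_j}$ recorded just above, via the symmetry of $g=\omega_a(\cdot,J\cdot)$, kills the remaining $dx\wedge dx$ block.
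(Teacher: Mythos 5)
Your main route is correct and genuinely different from the paper's. You compute $\omega_a = 2i\,\partial\overline\partial F$ directly via Wirtinger derivatives, arrive at $\omega_a = \sum_{j,k} F_{jk}\, dx_j \wedge d\theta_k$, and then recombine using $d\mu_{a,k} = \sum_j F_{jk}\, dx_j$ from Equations \eqref{eq: moment map and F} and \eqref{eq: Hessian F}. The computation, including the sign bookkeeping and the cancellation of the pure $dx\wedge dx$ and $d\theta\wedge d\theta$ blocks by symmetry of the Hessian, is correct. The paper instead works directly in the $(\mu_{a,j},\theta_j)$ coordinates, postulating an arbitrary coefficient expansion $\omega_a = \sum a_{jk}\, d\mu_{a,j}\wedge d\theta_k + \sum b_{jk}\, d\theta_j\wedge d\theta_k + \sum c_{jk}\, d\mu_{a,j}\wedge d\mu_{a,k}$ and using three structural facts to pin down the coefficients: the moment map identity $\iota_{\partial/\partial\theta_k}\omega_a = -d\mu_{a,k}$ fixes $a_{jk}=\delta_{jk}$, Lagrangian torus fibers force $b_{jk}=0$, and $J$-compatibility ($J^*\omega_a=\omega_a$) forces $c_{jk}=0$. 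Your fallback paragraph is, modulo the change of coordinates $x\leftrightarrow \mu_a$, this same argument, so you have in effect given both proofs.

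The tradeoff: your potential-function computation is shorter and makes the identity $\omega_a = \sum_{j,k}\frac{\partial^2 F}{\partial x_j \partial x_k}\, dx_j\wedge d\theta_k$ explicit (this is precisely the content of the Corollary that follows the Lemma, so you prove Lemma and Corollary simultaneously). The paper's structural argument, by contrast, never touches the potential and exhibits each vanishing as the consequence of one geometric condition, which is perhaps more transparent about \emph{why} the form is standard. Both require the same inputs (the moment map equation, $\bT^n$-invariance, and the Kähler condition on $\omega_a$), just organized differently; neither is circular.
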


\begin{proof} We impose conditions on the coefficients of terms in $\omega_a$. There are three conditions: $\iota_{\dd/\dd \theta_j}\omega_a = -d\mu_{a,j}$, fibers of the moment map are Lagrangian with tangent space spanned by the $\dd/\dd \theta_j$, and $\omega_a$ is K\"ahler so compatible with the almost complex structure $J$ induced from multiplication by $i$ on the affine coordinates. The first condition is the definition of the moment map, where $\partial/\partial \theta_j$ is the infinitesimal action of $t_j$ on $M$ defined by taking the derivative at the identity as in Equation \eqref{eq: A} and Equation \eqref{eqn:alphaz}, see also \cite[Definition 22.1]{Ca64}. That is, $\theta_{j=1,\ldots n}$ are the imaginary parts of the complex coordinates on the Lie algebra of $\bT^n_\bC$ and $\mu,\theta$ are the action-angle coordinates. In particular, $\dd/ \dd \theta_j$ corresponds to rotating the $j^{\textrm{th}}$ affine toric coordinate $y_j$ by $t_j$, as described in the paragraph above Equation \eqref{eq: alpha affine chart}.

As discussed in the paragraph following Corollary \ref{coro: polytope}, the second condition is true by the Arnold-Liouville theorem which is stated, for example, in \cite[Theorem 18.12]{Ca64}. In more detail, $\bT^n$ is abelian so has trivial Lie bracket and
$$\omega_a \left(\frac{\dd}{\dd \theta_i} , \frac{\dd}{\dd \theta_j} \right) = \left[\frac{\dd}{\dd \theta_i}, \frac{\dd}{\dd \theta_j}\right] =0= \{\mu_{a,i},\mu_{a,j}\}
$$ 
where $\{,\}$ denotes the Poisson bracket. But the tangent space of a $\mu_a$-fiber is precisely the kernel of $d\mu_a$, so by the first condition:
$$T \mu_a^{-1}(c) = \ker(d\mu_a) = \bigcap_{j=1}^n \ker ( \iota_{\dd/\dd \theta_j}\omega_a) \supseteq span\{\dd /\dd \theta_j\}_{j=1}^n.
$$
Since a fiber has dimension $n$, the last containment is equality. The fibers of the moment map are then $\bT^n$-orbits of the $\bT^n$-action $(e^{i \theta_1},\ldots, e^{i \theta_n}) \cdot (y_1,\ldots,y_n) =(e^{i \theta_1}y_1,\ldots, e^{i \theta_n}y_n) $, where $(t_1,\ldots,t_n)=(e^{i \theta_1},\ldots, e^{i \theta_n})$, therefore as $\omega_a$ is toric invariant, $\omega_a$ restricted to a fiber is 0. That is, the fibers are Lagrangian. 

For the third condition, we saw that multiplication by $i$ on $x \in \mathfrak{t}^n_\bC$ induces, under the Jacobian transformation between $\xi$ and $x$, $J=\left(\begin{array}{c | c}
0 & -[\partial_j \partial_k F]\\
\hline
[\partial_j\partial_k G]&0 
\end{array}
\right)$ in the $(\xi_j, \theta_j)$ coordinates, where $[\partial_j \partial_k F] = [\partial_j\partial_k G]^{-1}$ hence $J^2 = -\mathbb{I}_{2n}$. 

Now we prove the statement of the lemma. Let $\omega_a=\sum\limits_{j,k=1}^n a_{jk} d\mu_{a,j} \wedge d\theta_k + \sum\limits_{j \leq k, j,k=1}^n b_{jk} d\theta_j \wedge d\theta_k+c_{jk} d\mu_{a,j} \wedge d\mu_{a,k}.$ Then $\omega_a|_{span\{\dd /\dd \theta_j\}_{j=1}^n}  \equiv 0 \implies b_{jk}=0$ and by the definition of the moment map, $\iota_{\dd/\dd \theta_k}\omega_a = \sum\limits_{j=1}^n -a_{jk} d\mu_{a,j}= -d\mu_{a,k} \implies a_{kk} = 1, a_{jk}=0\;\; \forall j \neq k$. Wrapping up, compatibility with $J$ means that $J^*\omega_a =\omega_a$ thus
\begin{equation*}\label{eq:omega_aa_coords}
    \begin{split}
        & \sum\limits_{j=1}^n  d\mu_{a,j} \wedge d\theta_j + \sum\limits_{\substack{ j,k=1\\ j\leq k}}^n c_{jk}' d\theta_j \wedge d\theta_k =\sum\limits_{j=1}^n  d\mu_{a,j} \wedge d\theta_j + \sum\limits_{\substack{j,k=1\\ j\leq k}}^n c_{jk}d\mu_{a,j} \wedge d\mu_{a,k}\\
   & \therefore c_{jk} =0 \implies \omega_a=\sum\limits_{k=1}^n d\mu_{a,k} \wedge d\theta_k.
    \end{split}
\end{equation*}
where $c_{jk}'$ is a function of $[\partial_j\partial_k G]$. \qed
\end{proof}

Below we will use $\omega_a$ to interchangeably denote the symplectic forms on $\bT^n_\bC$ and $\mathring{{\mathrm{\Delta}}}\times \bT^n$, which we view as $\omega_a$ written in two different set of coordinates. For $K_M$ the same holds true replacing $n$ with $n+1$. 

\begin{corollary}
\begin{equation}\label{eqn:M-symplectic-form}
\omega_a =\sum_{j=1}^n d\xi_j \wedge d\theta_j= \sum_{j,k=1}^n \frac{\partial^2 F}{\partial x_j\partial x_k} dx_j\wedge d\theta_k 
= \frac{i}{2} \sum_{j,k=1}^n \frac{\partial^2 F}{\partial x_j\partial x_k} du_j\wedge d\bar{u}_k .
\end{equation}
\end{corollary}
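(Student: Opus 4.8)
The plan is to chain together three short computations, each of which uses only the preceding Lemma and calculus identities already recorded in this section. For the first equality, I invoke the Lemma, which gives $\omega_a = \sum_{j=1}^n d\mu_{a,j}\wedge d\theta_j$, and then use the identification $\xi_j = \mu_{a,j}$ from Equation \eqref{eq: moment map identification} together with Equation \eqref{eq: moment map and F}, which identifies $\mu_{a,j}$ with $\partial F/\partial x_j$; substituting gives $\omega_a = \sum_j d\xi_j\wedge d\theta_j$. For the second equality, since $F$ is $\bT^n$-invariant it is a function of $x$ alone, so differentiating $\xi_j = \partial F/\partial x_j$ gives $d\xi_j = \sum_{k=1}^n \frac{\partial^2 F}{\partial x_j\partial x_k}\,dx_k$ --- this is precisely Equation \eqref{eq: Hessian F} --- and plugging it into $\sum_j d\xi_j\wedge d\theta_j$ and relabeling the dummy indices yields $\omega_a = \sum_{j,k} \frac{\partial^2 F}{\partial x_j\partial x_k}\,dx_j\wedge d\theta_k$.

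For the third equality I pass to the holomorphic coordinates $u_j = x_j + i\theta_j$ of Definition \ref{def: toric coordinates}, so that $dx_j = \tfrac12(du_j + d\bar u_j)$ and $d\theta_k = \tfrac{1}{2i}(du_k - d\bar u_k)$. Expanding $dx_j\wedge d\theta_k$ produces four terms; the $du_j\wedge du_k$ and $d\bar u_j\wedge d\bar u_k$ pieces vanish upon summation against the symmetric Hessian $[\partial_{x_j}\partial_{x_k}F]$, since those wedge products are antisymmetric, and the two surviving mixed terms combine, after swapping the summation labels in one of them and using symmetry of the Hessian again, into $-\tfrac{1}{2i}\sum_{j,k} \frac{\partial^2 F}{\partial x_j\partial x_k}\,du_j\wedge d\bar u_k = \tfrac{i}{2}\sum_{j,k} \frac{\partial^2 F}{\partial x_j\partial x_k}\,du_j\wedge d\bar u_k$, which is the claimed formula.

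None of the steps presents a genuine obstacle; the only points requiring care are tracking the factors of $i$ in the last step and using the symmetry of $[\partial_{x_j}\partial_{x_k}F]$ to discard the $(2,0)$- and $(0,2)$-parts. As a sanity check, the final expression equals $2i\,\partial\overline\partial F$: since $F$ depends only on $\Re u$ one has $\partial_{u_j}\partial_{\bar u_k}F = \tfrac14\,\partial_{x_j}\partial_{x_k}F$, so $2i\,\partial\overline\partial F = \tfrac{i}{2}\sum_{j,k}\partial_{x_j}\partial_{x_k}F\,du_j\wedge d\bar u_k$, recovering Guillemin's statement that $F$ is a K\"ahler potential for $\omega_a$ (see \cite[Theorem 4.3]{G94}).
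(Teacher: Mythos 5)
Your proof is correct and matches the (implicit) route the paper intends: the corollary is stated without its own proof because it follows directly from the Lemma $\omega_a=\sum_j d\mu_{a,j}\wedge d\theta_j$, the identification $\xi_j=\mu_{a,j}=\partial F/\partial x_j$ from Equations \eqref{eq: moment map identification} and \eqref{eq: moment map and F}, the Hessian relation \eqref{eq: Hessian F}, and the standard change of variables $u_j=x_j+i\theta_j$. Your bookkeeping of the factor $-\tfrac{1}{2i}=\tfrac{i}{2}$, the use of Hessian symmetry to discard the $du\wedge du$ and $d\bar u\wedge d\bar u$ terms and to combine the two mixed terms, and the closing sanity check against $2i\,\partial\overline\partial F$ are all accurate.
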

In the above, we have written $\omega_a$ both in terms of the complex coordinates $u_j=x_j+i\theta_j$ via the identification of $\mu^{-1}(\mathring{{\mathrm{\Delta}}})\cong \bT^n_\bC$, and in terms of the action-angle coordinates $(\xi_j, \theta_j)$ via the identification of $\mu^{-1}(\mathring{{\mathrm{\Delta}}})\cong \mathring{{\mathrm{\Delta}}}\times \bT^n$.

\begin{corollary}\label{def: Riem metric}
The Riemannian metric compatible with  $\omega_a$ and the complex structure $J$ is
\begin{equation}\label{eqn:M-metric} 
g_a = \sum_{j,k=1}^n \frac{\partial^2 F}{\partial x_j\partial x_k} (dx_j dx_k + d\theta_j d\theta_k)= 
 \sum_{j,k=1}^n \frac{\partial^2 G}{\partial \xi_j\partial \xi_k} d\xi_j d\xi_k 
 + \sum_{j,k=1}^n \frac{\partial^2 F}{\partial x_j\partial x_k} d\theta_j d\theta_k,
\end{equation}
which we've also written both in terms of the complex coordinates and the action-angle coordinates.
\end{corollary}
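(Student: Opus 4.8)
The plan is to start from the definition of the compatible metric, $g_a(X,Y)=\omega_a(X,JY)$, and to read off its components in each of the two coordinate systems, using the expression for $\omega_a$ from the corollary immediately preceding this one (Equation \eqref{eqn:M-symplectic-form}) together with the fact, recorded just after Equation \eqref{eq: Hessian G}, that $J$ is the standard complex structure in the $(x_j,\theta_j)$-coordinates, i.e.\ $J\partial_{x_j}=\partial_{\theta_j}$ and $J\partial_{\theta_j}=-\partial_{x_j}$. Throughout I would abbreviate $H_{jk}:=\partial^2 F/\partial x_j\partial x_k$ and $G_{jk}:=\partial^2 G/\partial\xi_j\partial\xi_k$, both symmetric, and use that by Equations \eqref{eq: Hessian F} and \eqref{eq: Hessian G} these matrices are mutually inverse, $\sum_k H_{jk}G_{kl}=\delta_{jl}$.

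First I would evaluate $g_a$ on the coordinate vector fields. Since $\omega_a=\sum_{j,k}H_{jk}\,dx_j\wedge d\theta_k$ (the middle expression in Equation \eqref{eqn:M-symplectic-form}), applying $g_a(X,Y)=\omega_a(X,JY)$ gives $g_a(\partial_{x_i},\partial_{x_j})=\omega_a(\partial_{x_i},\partial_{\theta_j})=H_{ij}$, then $g_a(\partial_{\theta_i},\partial_{\theta_j})=-\omega_a(\partial_{\theta_i},\partial_{x_j})=\omega_a(\partial_{x_j},\partial_{\theta_i})=H_{ij}$, and $g_a(\partial_{x_i},\partial_{\theta_j})=-\omega_a(\partial_{x_i},\partial_{x_j})=0$ because $\omega_a$ contains no $dx\wedge dx$ term. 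Hence $g_a=\sum_{j,k}H_{jk}(dx_j\,dx_k+d\theta_j\,d\theta_k)$, which is manifestly symmetric, and it is positive-definite because $F$ is strictly convex (equivalently, $x\mapsto(\partial F/\partial x_1,\ldots,\partial F/\partial x_n)$ is the diffeomorphism onto $\mathring{{\mathrm{\Delta}}}$ from \cite[Theorem 3.3]{G94}); this establishes the first displayed equality.

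It then remains to rewrite the $dx$-part in the action-angle coordinates, the $d\theta$-part already being in final form. From $\xi_j=\partial F/\partial x_j$ one gets $d\xi_j=\sum_k H_{jk}\,dx_k$, and from the Legendre duality $x_j=\partial G/\partial\xi_j$ (Equation \eqref{eq: Legendre}) one gets $dx_j=\sum_k G_{jk}\,d\xi_k$. Substituting the latter and using $\sum_{j,k}G_{aj}H_{jk}G_{kb}=G_{ab}$ yields $\sum_{j,k}H_{jk}\,dx_j\,dx_k=\sum_{a,b}G_{ab}\,d\xi_a\,d\xi_b$, and therefore $g_a=\sum_{j,k}G_{jk}\,d\xi_j\,d\xi_k+\sum_{j,k}H_{jk}\,d\theta_j\,d\theta_k$, the second displayed equality. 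The computation is routine; the only points requiring care are the sign convention $g_a(\cdot,\cdot)=\omega_a(\cdot,J\cdot)$ (chosen so that $g_a$ comes out positive-definite), the index bookkeeping in the Legendre change of variables, and invoking that the Hessians of $F$ and $G$ are inverse to one another — so I do not anticipate a genuine obstacle beyond bookkeeping.
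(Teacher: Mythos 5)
Your proof is correct and is precisely the calculation the authors have in mind but leave unwritten — the statement appears in the paper as a bare corollary of the preceding lemma (which gives $\omega_a$ in the three coordinate systems) and of the identification of $J$ in $(x_j,\theta_j)$-coordinates with the standard block matrix, with no proof supplied. Your computation of $g_a(\partial_{x_i},\partial_{x_j})$, $g_a(\partial_{\theta_i},\partial_{\theta_j})$, $g_a(\partial_{x_i},\partial_{\theta_j})$ via $g_a(\cdot,\cdot)=\omega_a(\cdot,J\cdot)$ gives the first equality, and the change of variables $dx_j=\sum_k G_{jk}\,d\xi_k$ together with $\sum_{j,k}G_{aj}H_{jk}G_{kb}=G_{ab}$ (the Hessians being mutually inverse, as the paper records around Equations \eqref{eq: Hessian F} and \eqref{eq: Hessian G}) gives the second. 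Your remark on positive-definiteness via convexity of $F$ is also in order; no gap.
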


To recap, let  $\omega$ be a $\bT^n$-invariant K\"{a}hler form on $M$. Then
the action of $\bT^n$ on $(M,\omega)$ is Hamiltonian with a moment map $\mu:M\to \bR^n$ which is unique up to addition of 
a constant vector in the target  $\bR^n$. The image ${\mathrm{\Delta}}$ of $\mu$ is a convex polytope known as the moment polytope, and the moment map
determines smooth functions $F:\bR^n \to \bR$ and $G:\mathring{{\mathrm{\Delta}}} \to \bR$ up to adding constants, as described above.  Let $g$ be the Riemannian metric on
$M$ determined by the symplectic structure $\omega$ and the complex structure on $M$. Then $g$ is given by the right hand side of
\eqref{eqn:M-metric}. When $\omega$ is the symplectic structure coming from symplectic reduction, $G(\xi)$ is given explicitly in \cite{G94}.  Below we summarize a way of deriving Guillemin's formula for $G(\xi)$ due to Calderbank-David-Gauduchon \cite{CDG}. 
\begin{lemma}
The Legendre transform $G(\xi)$ of the potential function $F(x)$ is 
$$G(\xi)=\frac{1}{2}\sum_{j=1}^dL_j(\xi) \log L_j(\xi).
$$
\end{lemma}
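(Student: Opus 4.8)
The plan is to compute $\partial G/\partial\xi$ directly and then antidifferentiate, in the spirit of the Calderbank--David--Gauduchon derivation referred to above. By the Legendre duality of Equations~\eqref{eq: moment to cx} and \eqref{eq: Legendre}, $\partial G/\partial\xi_k = x_k$, where $(x_1,\dots,x_n)=\Re(u_1,\dots,u_n)$ are the real parts of the coordinates on $\ft^n_\bC$ dual to the complex toric coordinates $t_j=e^{u_j}$ of Definition~\ref{def: toric coordinates}. So it suffices to express $x$ as an explicit function of $\xi\in\mathring{{\mathrm{\Delta}}}$; integrating that expression then yields $G$.

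To carry this out I would use the identification $\mu_a^{-1}(\mathring{{\mathrm{\Delta}}})\cong\bT^n_\bC$ from Section~\ref{subsubsec: Ua}. A point of the open orbit with toric coordinate $t=e^u$ is represented in $U_a$ by $\alpha_\bC(t)\in\bT^d_\bC=\bC^d_\emptyset\subset U_a$, and its unique representative $z$ in $\mu_N^{-1}(a)$ is $z=R_a(\alpha_\bC(t))=\rho_\bC(\lambda_a(\alpha_\bC(t)))\cdot\alpha_\bC(t)$ (Definition~\ref{def:def_retract}), with $\lambda_a(\alpha_\bC(t))\in N_\bC/N=(\bR_{>0})^{d-n}$. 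Writing $\nu:=\log\lambda_a(\alpha_\bC(t))\in\bR^{d-n}$ and $x:=\Re(u)\in\bR^n$ and taking logarithms of moduli, one gets the identity in $\bR^d$
\[
\log|z_j| = (Q\nu)_j + (Ax)_j, \qquad j=1,\dots,d,
\]
where $Q$ and $A$ are the matrices of $(d\rho_\bC)_1$ and $(d\alpha_\bC)_1$. On the other hand, by Corollary~\ref{coro: polytope} (and the computation preceding it, which settles the relation $(d\rho_\bC)_1^*\kappa=-a$), on $\mu_N^{-1}(a)$ one has $\tfrac12|z_j|^2=L_j(\xi)$ with $\xi=\mu_a([z])$, so $\log(2L_j(\xi))=2(Q\nu)_j+2(Ax)_j$. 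Applying the matrix $B$ of $(d\beta_\bC)_1$ to this identity of vectors in $\bR^d$ and using $BA=\bI_n$ and $BQ=0$ (the matrix forms of Equations~\eqref{eqn:vs} and \eqref{eqn:vQ}) annihilates the retraction term, giving
\[
x_k = \tfrac12\sum_{j=1}^d v^j_k\,\log\bigl(2L_j(\xi)\bigr), \qquad k=1,\dots,n,
\]
with $v^j$ the columns of $B$. The key point of this step is that $\nu$ --- equivalently the deformation retraction $R_a$, whose explicit form is the genuinely hard ingredient, as Section~\ref{subsec: KP1} illustrates --- lies in $\mathrm{col}(Q)=\ker B$ and so disappears from the answer entirely.

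It then remains to antidifferentiate. Since $\partial L_j/\partial\xi_k=v^j_k$, one checks $\partial_{\xi_k}\bigl[L_j\log(2L_j)-L_j\bigr]=v^j_k\log(2L_j)$, so comparison with the previous display gives $G(\xi)=\tfrac12\sum_{j=1}^d\bigl(L_j(\xi)\log(2L_j(\xi))-L_j(\xi)\bigr)$ up to an affine function of $\xi$. Finally $\log(2L_j)=\log L_j+\log 2$ and $\sum_j L_j(\xi)=\langle\sum_j v^j,\xi\rangle-\sum_j\kappa_j$ is itself affine in $\xi$, so $G(\xi)=\tfrac12\sum_{j=1}^d L_j(\xi)\log L_j(\xi)$ up to an affine function of $\xi$; absorbing this ambiguity --- which is already present in the definition of $F$ and $G$ via Equation~\eqref{eq: Legendre}, and corresponds to the ``up to constants'' freedom noted in the introduction --- yields the stated formula.

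The main obstacle is not any one calculation but the normalization bookkeeping: one must know that on $\mu_N^{-1}(a)$ the functions $\tfrac12|z_j|^2$ are exactly the affine functions $L_j(\xi)$, and that $x=\Re u$ is exactly $\partial G/\partial\xi$. Both are already available in the excerpt (Corollary~\ref{coro: polytope} and Equation~\eqref{eq: moment to cx}); once they are in place the proof is just the short ``project by $B$'' computation above. As a sanity check one can specialize to $\bP^n$, where $L_j(\xi)=\xi_j$ for $j\le n$ and $L_{n+1}(\xi)=a-\xi_1-\dots-\xi_n$: the formula for $x$ reproduces $x_k=\tfrac12(\log\xi_k-\log(a-\xi_1-\dots-\xi_n))$, matching Example~\ref{ex: Pn chart}.
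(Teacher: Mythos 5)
Your proof is correct, but it takes a genuinely different route from the paper's. The paper works metrically: it writes the flat metric on $\bC^d$ in polar coordinates, splits off the base part $\tilde g=\sum_j d\mu_j^2/(2\mu_j)$, recognizes it as the Hessian of $\tfrac12\sum_j\mu_j\log\mu_j$ in the $\mu_j$ variables, and then pulls back along $\mu_j=L_j(\xi)$ to get $G$. Your argument bypasses the metric entirely and works at the level of the Legendre duality $\partial G/\partial\xi=x$: you track the deformation retraction $R_a$ applied to $\alpha_\bC(t)$, obtain $\log|z_j|=(Q\nu)_j+(Ax)_j$, and then kill the unknowable retraction vector $\nu$ by applying $B$ and using $BQ=0$, $BA=\bI_n$; antidifferentiating the resulting formula for $x(\xi)$ recovers $G$. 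The key computation $L_j(\xi)=\tfrac12|z_j|^2$ on $\mu_N^{-1}(a)$ is correctly invoked and your sanity check against Example~\ref{ex: Pn chart} goes through. Your approach has the pedagogical advantage of making explicit \emph{why} one need not know $R_a$ (a point the paper leaves implicit, since the metric splitting hides it): $\nu$ lies in $\operatorname{col}(Q)=\ker B$. It also directly exhibits the formula for $x$ in terms of $\xi$, which the paper never writes down in this generality. The paper's metric approach, by contrast, produces the compatible Riemannian metric $\tilde g_a$ on $\mathring{{\mathrm{\Delta}}}$ at the same time, which is useful information the ``project by $B$'' computation does not give for free. One small looseness: after integrating $\partial G/\partial\xi_k=x_k$ with $\alpha_\bC$ (hence $x$) already fixed, $G$ is determined up to a \emph{constant}, not an affine function; the affine freedom only enters when you allow $\alpha_\bC$ to vary, shifting $x$ by a constant. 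This does not affect your conclusion, since the discrepancy $(\log 2-1)\sum_j L_j/2$ is indeed affine and hence absorbable by such a change together with the constant ambiguity, but you should keep the two sources of freedom distinct.
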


\begin{proof} The standard flat metric on $\bC^d$ written in polar coordinates, $z_j=r_je^{i\varphi_j}$, is
\begin{equation}\label{eq: metric_g}
g = \sum_{j=1}^d dz_jd\bar z_j=\sum_{j=1}^d dr_j^2+r_j^2d\varphi_j^2 = \sum_{j=1}^d \frac{d\mu_j^2}{2\mu_j}+2\mu_j d\varphi_j^2 = \tilde g +\sum_{j=1}^d 2\mu_j d\varphi_j^2,
\end{equation}
where $\mu_j=\frac{1}{2}r_j^2=\frac{1}{2}|z_j|^2$ is the $j^{\textrm{th}}$ component of the moment map $\mu_{\bT^d}$ given by Equation (\ref{eq: Cd moment map}). This metric is compatible with the standard symplectic form $\omega$ of Equation (\ref{eq: Omega Cd}) and the standard complex structure on $\bC^d$. The first term in the sum above in Equation \eqref{eq: metric_g} in polar coordinates, denoted $\tilde g$, is the metric on the image of the moment map, and the 2nd term is the metric on the torus fiber. Note that $\tilde g$ can be written in terms of the Hessian of a potential function $\widetilde G$ as follows
$$
\tilde g=\sum_{j=1}^d \frac{d\mu_j^2}{2\mu_j}= \sum_{j,k=1}^d \frac{\partial^2 \widetilde G}{\partial\mu_j\partial\mu_k}d\mu_jd\mu_k, \mbox{ where } \widetilde G =\sum_{j=1}^d \mu_j\log \mu_j.
$$
Recall that the metric $g_a$ in  Equation (\ref{eqn:M-metric}) is the canonical metric defined by symplectic reduction from $\bC^d$ with the flat metric $g$.   The first term of $g_a$ on the right-most expression of Equation \eqref{eqn:M-metric}, which is 
$$
\tilde g_a =\sum_{j,k=1}^n \frac{\partial^2 G}{\partial \xi_j\partial \xi_k} d\xi_j d\xi_k,
$$
is a Riemannian metric on $\mathring{{\mathrm{\Delta}}}$ (the unique Riemannian metric such that $\mu_a: (\bT^n_\bC ,g_a)\to (\mathring{{\mathrm{\Delta}}}, \tilde g_a)$ is a Riemannian submersion).  By construction and because $L(\xi)$ defined in Equation (\ref{eq: L}) satisfies $L(\xi)_j=\frac{1}{2}|z_j|^2=\mu_j$, we have 
\begin{equation}\label{eq: G}
\tilde g_a=L^*\tilde g = \sum_{j=1}^d \frac{dL_j^2}{2L_j}, \quad   G= L^*\widetilde G=\frac{1}{2}\sum_{j=1}^dL_j \log L_j.
\end{equation}
\qed
\end{proof}

\begin{example}[$\bP^n$]  Recall $\alpha_\bC$ from Example \ref{ex: s_quot_mom_map} and $\mu_a$ from Equation \eqref{eq:Pn_mom_map}, leading to the conclusion at the end of Example \ref{ex: Pn chart} that
$$(\xi_1,\ldots, \xi_n)=\frac{a(e^{2x_1},\ldots, e^{2x_n})}{ e^{2x_1} + \cdots + e^{2x_n} +1}
$$
and
$$
(x_1,\ldots, x_n)= \frac{1}{2} \Big(\log\xi_1 - \log (a-\xi_1-\cdots-\xi_n), \ldots, \log \xi_n-\log (a-\xi_1-\cdots-\xi_n)\Big).
$$
Thus, integrating, we find by Equation \eqref{eq: moment map and F} and Equation \eqref{eq: moment to cx}:
$$
F(x_1,\ldots,x_n) = \frac{a}{2} \log ( e^{2x_1} +\cdots + e^{2x_n} +1)=\frac{a}{2}\log(|t_1|^2+\cdots +|t_n|^2+1),
$$
$$
G(\xi_1,\ldots,  \xi_n)=\frac{1}{2}\Big(\xi_1\log\xi_1+\cdots+ \xi_n\log \xi_n+(a-\xi_1-\cdots-\xi_n)\log(a-\xi_1-\cdots-\xi_n)\Big).
$$
\qed
\end{example} 

\noindent These calculations apply for $K_M$ as well, replacing $n$ with $n+1$ everywhere.

Note that if $\omega$ is the symplectic structure coming from symplectic reduction, and if we know the moment polytope ${\mathrm{\Delta}}$ to begin with, then according to Equations (\ref{eq: G}) and (\ref{eq: L}), we can readily write down the explicit formulas $G(\xi)$ from the combinatorics of the moment polytope ${\mathrm{\Delta}}$.   Below we describe two perspectives for writing down $F(x)$. Both are less straightforward for the exact same reason, if not impossible, compared to finding $G(\xi)$.
\begin{enumerate}
\item  If we could compute the moment map, $\mu_a$, then we could find $F(x)$ via Equation (\ref{eq: moment map and F}).  However, as we saw in Section \ref{subsec: KP1} (also mentioned in Example \ref{ex: CP2(3)} for $\bP^2(3)$), the moment map can be extremely complicated. In Example \ref{ex: KPn}, our ability to explicitly compute the moment map in the $n=1$ case boils down to being able to solve the cubic polynomial (Equation (\ref{eq: cubic})).  For $n>2$, it's not possible to compute explicitly, at least not using the method of Section \ref{subsec: KP1}.
\item Once we know $G(\xi)$, we can find $F(x)$ via Legendre transform, i.e.~Equations (\ref{eq: moment to cx}) and (\ref{eq: Legendre}).  However, in order to write down $F(x)$ using Equation (\ref{eq: Legendre}), we need to write $\xi$ in terms of $x$, and $\xi(x)$ is exactly the moment map, which is complicated as we just discussed.  One might also think to use Equation (\ref{eq: moment to cx}), which is a system of $n$ nonlinear equations in $(x_1,\ldots, x_n)$ and $(\xi_1,\ldots, \xi_n)$.  Similarly, it's complicated, if not impossible, to use that to find $\xi(x)$.
\end{enumerate}

\begin{remark}[Comparing with the K\"ahler structure defined by an ample line bundle]\label{rmk:omega_comparison} The toric manifolds from Delzant's construction are projective, so there is another natural K\"ahler form obtained from using sections of an ample line bundle. This remark addresses the following question: \textit{is the Guillemin construction of a symplectic form from symplectic reduction the same as the K\"ahler structure from toric geometry given an ample line bundle?} They are different in general, however they are in the same K\"ahler class by \cite[Equation (1.7)]{G94}.  We first consider the 1-dimensional case. The ample line bundles on 
$\bP^1$ are $\cO_{\bP^1}(k)$, where $k$ is a positive integer, and the space of sections of $\cO_{\bP^1}(k)$ can be identified with the space of homogeneous polynomials in two variables $z_1, z_2$ of degree $k$:
$$
H^0(\bP^1,\cO_{\bP^1}(k)) =\bigoplus_{m=0}^k \bC z_1^m z_2^{k-m}. 
$$
Let $\bC^*$ act on $z_1$ and $z_2$ with weights 1 and 0 respectively. The moment map in Section 4.2 of Fulton \cite{fulton} is given by
$$
\hat{\mu}_k([z_1:z_2]) =  \frac{ \sum_{m=0}^k m  |z_1^m z_2^{k-m}|^2 }{ \sum_{m=0}^k |z_1^m z_2^{k-m}|^2}. 
$$
The image of $\hat{\mu}_k$ is the closed interval $[0,k]\subset \bR$.   

When $[z_1:z_2]$ are both non-zero, define $ x =\log|z_1/z_2|$. Then 
letting $a=k$ in $\mu_a$ from Example \ref{ex: s_quot_mom_map} we get,
$$
\mu_k([z_1:z_2]) = \frac{dF_k}{dx}(x), \quad \hat{\mu}_k([z_1:z_2]) = \frac{d\hat{F}_k }{dx}(x), 
$$
where the hat denotes the potential obtained from sections of the ample line bundle $\cO(k)$ and without the hat means we are using symplectic reduction on $S^3(\sqrt{2k})/S^1$:
$$
F_k (x) = \frac{k}{2}\log (1+e^{2x}),\quad   \hat{F}_k (x) = \frac{1}{2} \log\left(\sum_{m=0}^k e^{2mx}\right).
$$
In particular, $F_1(x)= \hat{F}_1(x)$
however they are different for larger $k$. Here the K\"ahler potential $F_k$ for the symplectic form $\omega_k$  defining the moment map ${\mu}_k: \bP^1\to \bR$ is such that 
$$
\omega_k = d\mu_k\wedge d\theta = \frac{d^2 F_k}{dx^2} dx \wedge d\theta 
$$
and similarly for the case without the hat. Therefore we see that $\hat{\omega}_1 = \omega_1$ since the K\"ahler potentials are equal. When $k>1$ is an integer, $\hat{\omega}_k \neq \omega_k$ but they represent the same 
class in $H^2(\bP^1,\bR)\cong \bR$. Also  note that here $k$ is an integer, so the images of $\hat \mu_k$, which is $[0,k]$, does not gives all possible $[0,a]$, where $a\in \bR$, that $\mu_a$ does.

More generally for higher dimensional projective spaces, let $\hat{\omega}_k$ be the symplectic form on $\bP^n$ determined by  the ample line bundle $\cO_{\bP^n}(k)$, where $k>0$. 
Then $\hat{\omega}_k$ is the pullback of $\omega_1$ on  $\bP^{ \binom{k+n}{k}}$ under the degree $k$  embedding $\bP^n\hookrightarrow \bP^{\binom{k+n}{k}}$. 
In particular,  $\hat{\omega}_1=\omega_1$. When $k>1$, $\hat{\omega}_k\neq \omega_k$ but they represent the same class in $H^{1,1}(\bP^n;\bR)\cong\bR$.
\end{remark}

\begin{remark}[Properties of $\bP^n$]
Of interest to symplectic geometers is the choice of symplectic cohomology class, for example when considering global homological mirror symmetry one considers the K\"ahler cone $\mathcal{K}_M$ of all possible K\"ahler classes on the symplectic manifold $M$. Information about $\mathcal{K}_M$ can be obtained using the Calabi-Yau theorem (stated for example in \cite[Theorem 4.B.19]{huy}), that for compact K\"ahler manifolds there is a bijection between $\mathcal{K}_M$ and the set of K\"ahler forms $\omega$ with $\omega^n=\la \times vol$ for some $\lambda \in \bR_{>0}$. As an example, consider the Fano manifold $\bP^n$. The symplectic area of a projective line
$\bP^1\subset \bP^n$ is
$$
\int_{\bP^1}\omega_a =  2\pi a 
$$
and the cone of K\"ahler classes is one-dimensional corresponding to this parameter $a$. When $a=1$, we obtain an integral cohomology class. 

Furthermore, if $c_1(M)=[\alpha]$ is represented by a closed real $(1,1)$-form $\alpha$ and we've fixed a choice of K\"ahler class $\beta$ on $M$, there is a unique K\"ahler structure $g$ on $M$ so that $\alpha=\mathrm{Ric}(g)$ and $[\alpha]=\omega_g$ the K\"ahler form determined by the metric $g$. This is \cite[Proposition 4.B.21]{huy}. Note that given any two of a metric, complex structure, and symplectic form on $M$ which are compatible, the third is uniquely determined, see \cite[p 29]{huy}. For example, the symplectic form $\omega_a$ and the standard complex structure on $\bP^n$ determine a K\"{a}hler metric $g_a$ on $\bP^n$ which satisfies the K\"{a}hler-Einstein equation:
$$
\mathrm{Ric}(g_a) = \frac{n+1}{a} \omega_a.
$$
\end{remark}

\section{Connection to mirror symmetry} \label{sec: HMS}

We give some context for mirror symmetry in which the above calculations play a role. We start with background.

\subsection{Mirror Symmetry for Calabi-Yau manifolds} 

Mirror symmetry
relates the symplectic (resp. complex) geometry of a Calabi-Yau manifold $X$ to the complex (resp. symplectic) geometry of a mirror Calabi-Yau manifold $\check{X}$ of the same dimension. 
Let $(X,\omega,J)$ be a Calabi-Yau manifold, where $\omega$ is the symplectic structure and $J$ is the complex structure, and
let $(\check{X},\check{\omega}, \check{J})$ be the mirror Calabi-Yau manifold.  Kontsevich's \cite{hms} Homological Mirror Symmetry (HMS) conjecture predicts the following 
equivalences of triangulated categories: 
\begin{equation}\label{eqn:HMSAB} 
D^\pi Fuk(X,\omega)\cong D^b Coh(\check{X},\check{J}),
\end{equation}
\begin{equation}\label{eqn:HMSBA}
\quad D^b Coh(X,J)\cong D^\pi Fuk(\check{X},\check{\omega}),
\end{equation} 
where $D^\pi Fuk$ is the split-closed derived Fukaya category (derived by taking the homotopy category so one obtains a triangulated category) and $D^b Coh$ is the bounded derived category of coherent sheaves.

In \cite{Sherid_CY}, N. Sheridan proved the equivalence \eqref{eqn:HMSAB} when $X$ is a smooth Calabi-Yau hypersurface in the projective space $\bP^n$. In this case, the mirror Calabi-Yau manifold $\check{X}$ is (a crepant resolution of) a Calabi-Yau hypersurface in the orbifold $\bP^n/G$, where $G= (\bZ_{n+1})^{n}$.
 (The $n=2$ case was first proved by  Polishchuk-Zaslow \cite{zp}, and the $n=3$ case was first proved by  P. Seidel \cite{seidel_quartic}.) 
 
 More generally, let $X$ be a  smooth Calabi-Yau hypersurface in  a toric Fano manifold $M$. There is a one-to-one correspondence between projective Gorenstein 
Fano toric varieties and isomorphism classes of reflexive lattice 
polytopes ${\mathrm{\Delta}}$ (see \cite[Theorem 8.3.4]{cls}, and in 2-dimensions, the 16 reflexive lattice polygons are listed in \cite[p 382]{cls}).  The Batyrev mirror $\check{X}$ is (a crepant resolution of) a Calabi-Yau hypersurface in the Gorenstein toric Fano variety  $\check{M}$  defined by  the dual reflexive polytope $\check{{\mathrm{\Delta}}}$ \cite{Batyrev}. When $n>3$, $H^{1,1}(X) = H^{1,1}(M) = H^2(M)$, and the K\"{a}hler moduli of the compact Calabi-Yau $(n-1)$-fold can be identified with the K\"{a}hler moduli of the ambient compact toric Fano manifold $M$.

\subsection{Mirror Symmetry for Landau-Ginzburg Models}  \label{sec:LGmodel}

A smooth  Calabi-Yau hypersurface $X$ in a toric Fano manifold $M$ can be identified with the critical locus of a holomorphic function $W$ on the total space $K_M$ of the canonical line bundle of $M$ as follows. When $M$ is Fano, $K_M^*$ is ample and for a generic section $s$ of $K_M^*$, 
the zero locus $X:=s^{-1}(0)\subset M$ is a smooth anti-canonical divisor hence a manifold. In particular $X$ is a compact Calabi-Yau manifold of complex dimension $n-1$.  By the adjunction formula, $K_X \cong (K_M \otimes \mathcal{O}_M(X))|_X \cong \mathcal{O}_X$, which implies $K_M^*|_X \cong \mathcal{O}_M(X)|_X$. For example, when $M=U_a/N_\bC$ where $U_a\subset \bC^d$,  let $s(z_1,\ldots, z_d)\in \bC[z_1,\ldots,z_d]$ be a polynomial in $z_1,\ldots,z_d$ such that the rational function $\frac{ s(z_1,\ldots,z_d)}{z_1\cdots z_d}$ is invariant under the $N_\bC$-action on $\bC^d$.  Then $s(z_1,\ldots,z_d)$ defines a section of the anti-canonical line bundle $K_M^*$.

Define the holomorphic function $W:K_M\to \bC$ by $W(z,p) = \langle p, s(z)\rangle$, where $z\in M$, $p\in (K_M)_z$ (the fiber of $K_M$ over $z$), and $\langle - , - \rangle$ is the pairing between dual vector spaces. (For example, for the polynomial $s$ in the example of the previous paragraph, we see that $p s(z_1,\cdots, z_d)  \in  \bC[z_1,\ldots,z_d,p]$ is invariant under the $N_\bC$-action on $\bC^{d+1}$ so descends to a well-defined holomorphic function $W: K_M \to \bC$.) The critical locus of $W$ is hence given by 
\begin{align*}
    \begin{split}
        \mathrm{Crit}(W) &= \{[z_1,\ldots,z_d,p] \in K_M:  dW(z_1,\ldots,z_d,p)=0\}\\
        &= \{ [z_1,\ldots,z_d,p] \in K_M:  p = s(z_1,\ldots,z_d) =0\}.
    \end{split}
\end{align*}
Namely, the critical locus of $W$ is exactly $X\subset M\subset K_M$ where the second inclusion is by the zero section: $\mathrm{Crit}(W)=s^{-1}(0)=X$. The pair $(K_M, W)$ is an example of a \emph{Landau-Ginzburg (LG) model} and $W$ is known as the \emph{superpotential}. 

\begin{example}[Fermat surface a the critical locus of a superpotential on $K_{\bP^n}$]\label{ex: Fermat}
Let $s(z_1,\ldots,z_{n+1})\in \bC[z_1,\ldots,z_{n+1}]$ be a homogeneous polynomial of degree $n+1$, for example the Fermat polynomial in $(n+1)$-variables $\sum_{j=1}^{n+1} z_j^{n+1}$. Then $p s$ is invariant under the $\bC^*$-action on $(\bC^{n+1}-\{0\})\times\bC$ and descends to a holomorphic function 
$$
W: K_{\bP^n}\to \bC,\quad [z_1,\ldots,z_n,p]\mapsto ps(z_1,\ldots,z_{n+1})
$$
which, in the case of the Fermat polynomial, has the following critical locus \begin{eqnarray*}
\mathrm{Crit}(W) &=& \{ [z_1,\ldots,z_{n+1},p]\in K_{\bP^n}:  dW(z_1,\ldots,z_{n+1},p) = 0\}  \\
&=&  \{ [z_1,\ldots,z_{n+1},p]\in K_{\bP^n}:  \sum_{j=1}^{n+1} z_j^{n+1} = p = 0\} = s^{-1}(0)\cong X_{n+1},
\end{eqnarray*} 
where $X_{n+1} = \{ [z_1,\ldots,z_{n+1}] \in \bP^n: \sum_{j=1}^{n+1} z_j^{n+1}=0\}$ is the Fermat Calabi-Yau hypersurface in $\bP^n$.\qed
\end{example}

Similarly, the mirror $\check{X}$ can be identified with the critical locus of a holomorphic function $\check{W}$ on the total space $K_{\check{M}}$ of the canonical line bundle of $\check{M}$.  The LG model $(K_{\check{M}}, \check{W})$ is mirror to the LG model $(K_M, W)$. 
A natural formulation of the homological mirror symmetry conjecture in this setting is the following equivalences of triangulated categories:
\begin{equation}\label{eqn:LGHMSAB} 
\mathcal{W}(K_M, W)\cong MF(K_{\check{M}},\check{W}),
\end{equation}
\begin{equation}\label{eqn:LGHMSBA}
MF(K_M,W) \cong \mathcal{W}(K_{\check{M}},\check{W}),
\end{equation} 
where wrapping in non-compact fibers leads one to take $\mathcal{W}(-,-)$ the fiber-wise wrapped Fukaya category (being defined in \cite{AAhypersurf}, see also \cite{jeffs20}) of the LG model, and $MF$ is the category of matrix factorizations.

The B-model $MF(K_M,W=\langle p,s(z)\rangle)$ on the LG model is equivalent to the B-model $D^bCoh(X)$ on the Calabi-Yau hypersurface $X=s^{-1}(0)=\mathrm{Crit}(W)\subset M$, as a consequence of Orlov's generalized Kn\"{o}rrer periodicity theorem \cite{Orlov}.  An $A$-model version of this Kn\"orrer periodicity theorem would be an equivalence between $\mathcal W(K_M,W)$ and $D^\pi Fuk(X)$. Recently \cite{jeffs20} has proven a version of Kn\"orrer periodicity for the A-model that uses the notion of a partially wrapped Fukaya category.  Also see \cite{seidel, WehrheimWoodward}.  With these equivalences, Equations \eqref{eqn:LGHMSAB} and \eqref{eqn:LGHMSBA} is equivalent to Equations \eqref{eqn:HMSAB} and \eqref{eqn:HMSBA} when $X=s^{-1}(0)$ is a smooth CY hypersurface in $M$ that is the critical locus of $W=\langle p, s\rangle$ on $K_M$. 


Now let us consider a different LG model of $(K_M, W=z_1\cdots z_dp)$, again $M^n=\bC^d/\!\!/N$ is a toric Fano manifold of dimension $n$ obtained via symplectic reduction from $\bC^d$ and $z\in \bC^d$ are the homogeneous coordinates.  The critical locus of $W$ in this case is a singular CY hypersurface in $M$ defined by $z_1\cdots z_d=0$, and it is the preimage $\mu^{-1}(\partial {\mathrm{\Delta}})$ of the boundary of the moment polytope for $M$.  The LG model $(K_M, W=z_1\cdots z_dp)$ captures the geometry of this singular $\mathrm{Crit}(W)$ via Kn\"orrer periodicity \cite{Orlov, jeffs20}, and $(K_M, W)$ turns out to be the generalized SYZ \cite{syz} mirror (in the sense of \cite{AAK}) of a smooth hypersurface $\mathrm{\Sigma}$ in $(\bC^*)^n$.   For example, if $M$ is a Fano surface defined by a reflexive polygon with $n$ vertices then $(K_M, W)$ is the generalized SYZ mirror of an $n$-punctured torus $\mathrm{\Sigma}$ (such as the thrice punctured torus $\mathrm{\Sigma}=\{1+x+y+t/xy=0\}\subset (\bC^*)^2$, whose mirror is the LG model $(K_{\bP^2}, W=z_1z_2z_3p)$).

More generally, the canonical bundle $K_M$ is a special case of a toric Calabi-Yau manifold $Y =\bC^{d+1}/\!\!/\bT^{d-n}$ of dimension $(n+1)$. The function $z_1\cdots z_d z_{d+1}$  on $\bC^{d+1}$ descends to a well-defined holomorphic function 
$W: Y\to \bC$. It was first proposed by Hori-Vafa \cite{HV00} and then proven in the SYZ framework by Abouzaid-Auroux-Katzarkov \cite{AAK} that LG-models given by $(Y,W)$ for $Y$ a toric CY manifold are generalized SYZ mirrors to hypersurfaces $\Sigma$ in toric varieties. The HMS prediction would be the equivalence of the following 
triangulated categories: 
\begin{equation}\label{eq: W(Y,W)}
\mathcal{W}(Y, W) \cong D^b Coh(\mathrm{\Sigma}) 
\end{equation}
\begin{equation}\label{eqn:MF}
MF(Y,W) \cong \mathcal{W}(\mathrm{\Sigma}),
\end{equation}
where $\mathcal{W}(\mathrm{\Sigma})$ is the wrapped Fukaya category of $\mathrm{\Sigma}$, wrapped due to $\mathrm{\Sigma}$ being a noncompact Liouville manifold. To define the fiber-wise wrapped Fukaya category $\mathcal{W}(Y,W)$ one would need to view $W:Y \to \bC$ as a symplectic fibration, which requires a good understanding of the symplectic structure on $Y$.  Equivalence \eqref{eq: W(Y,W)} is the subject of the work in preparation by Abouzaid-Auroux \cite{AAhypersurf} when $\mathrm{\Sigma}$ is an algebraic hypersurface in $(\bC^*)^n$ and $Y$ is a toric Calabi-Yau $(n+1)$-fold. In the other direction, the third author \cite{heather} proved the equivalence \eqref{eqn:MF} when $n=2$ where  $\mathrm{\Sigma}\subset (\bC^*)^2$ is a punctured Riemann surface via decomposition into pair-of-pants (thrice punctured spheres) and applying the result of \cite{AAEKO}, which establishes \eqref{eqn:MF} when $\mathrm{\Sigma}$ is a punctured sphere.  Lekili-Polishchuk \cite{LPpants} proved the equivalence \eqref{eqn:MF} when $\mathrm{\Sigma}\subset (\bC^*)^n$ is a generalized higher dimensional pair-of-pants. A version of the equivalence in Equation \eqref{eqn:MF} is proven from the microlocal perspective \cite{gam18} using localization results from \cite{GPS20}. 


Further generalizations are given in \cite[Section 10]{AAK}, such as to the SYZ mirrors for hypersurfaces $\Sigma$ of abelian varieties per the speculation of Seidel \cite{seidel_gen2_specul}. In \cite{Ca}, the second author proved a HMS result for genus-2 compact Riemann surfaces $\Sigma_2$ that are hypersurfaces in an abelian variety $V=(\bC^*)^2/\mathrm{\Gamma}_B$ ($\mathrm{\Gamma}_B\cong\bZ^2$) and its generalized SYZ mirror $(Y,v_0)$.  Here, she considers a LG model $(Y,v_0)$ where $Y=\widetilde{Y}/\mathrm{\Gamma}_B$ is the quotient of a toric Calabi-Yau 3-fold $\widetilde{Y}$ of infinite type by the free action of $\mathrm{\Gamma}_B$. The main result in \cite{Ca} is a fully-faithful embedding
\begin{equation}\label{eq:Ca}
D^b Coh(\mathrm{\Sigma}_2)\hookrightarrow H^0 \mathcal{FS}(Y,v_0).
\end{equation} 
where $\mathcal{FS}$ denotes the Fukaya-Seidel category with compact fibers that are abelian varieties degenerating to the following critical locus; the non-compact Calabi-Yau 3-fold $Y$ contains a ``banana'' configuration of three 2-spheres $C_1\cup C_2 \cup C_3$ that intersect at two triple intersection points. The  K\"{a}hler moduli of 
$Y$ is 3-dimensional and the real K\"{a}hler parameters are given by the symplectic areas $A_i$ of $C_i$. The complex moduli $\cM_2$ of
$\mathrm{\Sigma}_2$ is a complex orbifold of dimension 3.  In \cite{Ca}, the second author considers a one-parameter family of symplectic structures on $Y$ with $A_1=A_2=A_3$, which corresponds to a one-parameter family of complex structures on $\mathrm{\Sigma}_2$. 

In \cite{ACLL}, we extend the work \cite{Ca} to any genus 2 curve in $\cM_2$.  To do that, we need to construct more general symplectic structures on $Y$ where the areas of the three 2-spheres may vary independently, which inspired us to do the exercises that are in this paper. This is naturally done in the action-angle coordinates using Guillemin's K\"ahler potential. The canonical bundle $K_M$ is a local model for $\widetilde Y$, and in the example below, we express the superpotential also in the angle-action coordinates.

\begin{example}[The superpotential $W:K_M\to \bC$ in action-angle coordinates]\label{ex: superpotential_xi} Let $s(z_1,\ldots,z_d)=z_1 \cdots z_d$ and $W: K_M\to \bC[z_1,\ldots, z_d,p] \mapsto  p z_1 \cdots z_d$ a morphism between smooth toric varieties. We are interested in the A-model on the Landau-Ginzburg model $(K_M, W)$, so we would like to view $W$ as a symplectic fibration. We write it in terms of the action-angle coordinates $(\xi,\theta)$ instead of the complex homogeneous coordinates $(z_1,\ldots,z_d,p)$. In particular, since the superpotential is defined on the symplectic quotient $K_M$, it is independent of the choice of representative in the homogeneous coordinates. We could write it in terms of the affine coordinates corresponding to the $J^{th}$ chart where the complement of the $J^{th}$ $z_i$'s are scaled to 1, so it would still be the product of all the inhomogeneous coordinates, or we could write it in terms of the coordinates on the $\bT^n_\bC$ which parametrizes all those charts via choices of $\alpha_\bC$. In that case, it would just be projection onto the last coordinate, $t_{n+1}$.

More specifically, let $v: {\mathrm{\Delta}}^+ \times \bT^{n+1}\to \bC$ be the composition 
\begin{equation}
v:{\mathrm{\Delta}}^+ \times \bT^{n+1} \stackrel{\cong}{\longrightarrow}  \bT^{n+1}_\bC \xrightarrow{\alpha_\bC} \bT^{d+1}_\bC \xhookrightarrow{} U_a \times \bC   \xrightarrow{\widetilde{\pi}_a^+} K_M \stackrel{W}{\longrightarrow} \bC.
\end{equation}
where $\widetilde{\pi}^+_a: U_a \times \bC \to K_M$ is the projection map defined in Equation \eqref{eq:projs_KM}. Namely we compose the exponential map 
$$(\xi_1,\ldots, \xi_{n+1}, e^{i\theta_1},\cdots, e^{i \theta_{n+1}}) \mapsto (e^{\xi_1+i\theta_1},\ldots, e^{\xi_n + i \theta_{n+1}})$$ 
with $\alpha_\bC(t_1,\ldots,t_{n+1})$ which, up to permutation is $(y_1^J,\ldots, y_{n+1}^J, 1, \ldots, 1)$, and then multiply everything together to obtain $\prod_{k=1}^{n+1} y_k^J$. Now recall Definition \ref{def:alpha-p}, which defines the $\alpha_\bC$ injection for $K_M$, namely what the $y_k^J$ are as functions of $t_k$. Plugging in for $t_k$, we see that 
\begin{equation}\label{eq: v in action angle coordinate}
\begin{aligned}
v(\xi_1,\ldots, \xi_{n+1}, e^{i\theta_1},\cdots, e^{i \theta_{n+1}}) & = z_1\ldots z_dp = y_1^J \ldots y^J_{n+1}\\
& =t_{n+1}=  \exp\left(\frac{\partial G^+}{\partial \xi_{n+1}}(\xi)+i \theta_{n+1}\right).
\end{aligned}
\end{equation}
Namely the superpotential is projection onto the last coordinate in the parameters $(t_1,\ldots,t_{n+1})$ because that equals the product of the coordinates in $\alpha_\bC(t_1,\ldots,t_{n+1})$.
\qed
\end{example} 


\subsection{Monodromy in mirror symmetry}\label{subsec:monodromy}
Landau-Ginzburg models, of which $(K_M,W)$ is a prototype example, are important in physics and math, in particular in mirror symmetry. In particular, moment maps are an example of the more general notion of a \emph{Lagrangian torus fibration} which is the input needed to obtain a mirror when using the prescription given by SYZ mirror symmetry \cite{syz}. Thus in the case of $K_M$ there are two interesting fibrations, one is the Lagrangian torus fibration coming from the moment map we've described, and the other is $(K_M, W)$ which is a singular symplectic fibration with e.g.~the symplectic form coming from symplectic reduction. For manifolds which are Fano or of general type ($c_1<0$), their mirrors are Landau-Ginzburg models \cite{hori_vafa}, \cite{AAK}. So it is crucial to be able to understand the Fukaya category of a Landau-Ginzburg model, which is the algebraic input for the symplectic side of homological mirror symmetry. To do so, one may start by looking at the Floer theory of $(K_M,W)$ for noncompact Lagrangians which interact well with $W:K_M \to \bC$. Two types of non-compact Lagrangians are often used. 

One type of non-compact Lagrangian considered consists of those in \cite{seidel} which are \emph{thimbles} obtained by parallel transporting a Lagrangian in a fiber of $W$ to the singular locus over 0; this works when $W$ is a Lefschetz fibration. However, for more complicated singular fibers, this process could produce a singular Lagrangian if the fiber Lagrangian degenerates to a submanifold which is of codimension more than 1 from what it was in the generic fiber. In this case, Lagrangians which are \emph{U-shaped} may be used; they go around the critical value(s) of $W$, where often that value is 0. Therefore, it is of interest to know the monodromy around the origin of $W:K_M \to \bC$. This is what brought us to do the work in this paper, as we are interested in the Fukaya category of a certain LG-model in forthcoming work \cite{ACLL}.


Lastly, the theory of the symplectic quotient extends not only to noncompact manifolds, but to toric varieties of \emph{infinite type}. Specifically, one example is the following: let $Y$ be the example mentioned in the paragraph surrounding Equation \eqref{eq:Ca}. This is denoted $X_{(1,1)}$ in the SYZ mirror symmetry paper by \cite{KL}. It can be equipped with the following symplectic form, which is a scaled version of that from symplectic reduction, to account for the infinitely many facets.

\begin{definition}[Definition of $\omega$ from \cite{KL}]\label{def:omega_ACLL} 
There exists an open covering $\{U_v\}$ of the moment polyhedron ${\mathrm{\Delta}}_{\tilde{Y}}$, and non-negative bump functions $\rho_v$ on $\bR^3$ which are supported on $U_v$ and identically 1 in a smaller neighborhood of the boundary stratum, such that the following
\begin{equation}
    \tilde{G}(y) := \frac{1}{2} \sum_{v \in \mathrm{\Sigma}(1)}\rho_v(y)L_v(y) \log L_v(y)
\end{equation}
is a finite sum at each point $y \in \mathring{{\mathrm{\Delta}}}_{\tilde{Y}}$, and whose Legendre transform is $\mathrm{\Gamma}_B$ equivariant and defines a K\"ahler potential for $Y$, (namely its second-order derivatives are positive). Here, as earlier, $L_v\geq 0$ denotes the half spaces whose intersection defines ${\mathrm{\Delta}}_{\tilde{Y}}$.
\end{definition}

Computing monodromy in this case requires a bit more finesse and the computation is done by splitting into three regions: one is around a vertex of the polytope which is modeled on an open neighborhood around $0 \in \bC^3$, one is in a neighborhood of the center of a hexagon modeled on tot$\left(K_{\mathbb{CP}^2(3)}\right)$, and the third is the remaining region between these two which is harder to compute directly but can be estimated and computed up to Hamiltonian isotopy. 

In conclusion, toric varieties provide a rich source of symplectic manifolds, including compact, noncompact, and of infinite type, and this article describes how to compute information about them of interest to symplectic geometers.

\section{Notation}\label{sec:notation}

This is an index for notation appearing throughout the text, with some comparisons to notation used in \cite{G94}.

\subsubsection*{Section \ref{sec: symp_quot}: Toric manifolds as symplectic quotients}
\begin{itemize}[\textbullet]
\item $X$ in Guillemin \cite{G94} is called $M$ or $K_M$ here
\item $\cdot$ denotes a group action as specified in the context
\item $\mathbb I_k$ denotes the $k\times k$ identity matrix
\item $\bT^{k}_\bC$ is the complex algebraic torus $(\bC^*)^k$ of dimension $k$
\item $\bT^{k}$ is the real torus of dimension $k$ obtained as the maximal compact subgroup $U(1)^k$ of $\bT^k_\bC$ by restricting to norm 1 coordinates
\item $n=\dim_\bC M$, $n+1 = \dim_\bC K_M$ 
\item Maps involved in symplectic reduction:
\begin{enumerate}
\item $\rho_\bC: N_\bC \to \bT^d_\bC$ is an embedding and $Q=(Q^l_k)_{k,l}$ is the matrix representing its linearization 
\item $\beta_\bC: \bT^d_\bC \to \bT^d_\bC/N_\bC \cong \bT^n_\bC$ is the quotient map and $B=(v^k_m)_{m,k}$ is the matrix representing its linearization
\item superscripts denote the column, subscripts denote the row
\end{enumerate}
\item $\mu_{\bT^d}$ is the moment map for the standard Hamiltonian $\bT^d$-action on $\bC^d$
\item $\mu_N$ is the moment map for the Hamiltonian $N$-action on $\bC^d$
\item $U_a \subset \bC^d$ is the open set where $N_\bC$ acts freely and which contains $\mu_N^{-1}(a)$
\item $U_a^+ = U_a \times \bC$
\item $R_a: U_a \to \mu_N^{-1}(a)$ retracts $U_a$ onto $\mu_N^{-1}(a)$ via a choice of $\la_a(z) \in N_\bC$ for each $z \in U_a$
\item symbols with a $+$ refer to the $K_M$ case
\item $Z$ in \cite{G94} is a level set, here called $\mu_N^{-1}(a)$ for $M$ or ${\mu^+_{N}}^{-1}(a)$ for $K_M$
\item $z_1,\ldots,z_d$ are the homogeneous coordinates on $\bC^d$ (with additional coordinate $p$ in the case of $K_M$)
\item $r_1,\ldots, r_d, \varphi_1,\ldots, \varphi_d$ are the polar coordinates of the homogeneous coordinates
\item $\mathrm{{\mathrm{\Delta}}}$ denotes the moment polytope (when bounded, for compact $M$) and ${\mathrm{\Delta}}^+$ denotes the moment polyhedron (when unbounded, for noncompact $K_M$)
    \item Projection maps:
    \begin{equation*}
\begin{cases}
\pi_a: (\mu_N)^{-1}(a) \rightarrow M =  (\mu_N)^{-1}(a)/N, \\
\widetilde \pi_a: {U_a}   \rightarrow M = U_a/N_\bC,
\end{cases}
\end{equation*}
defined in Theorems \ref{thm: MW quotient} and \ref{thm: GIT}, and
    \begin{equation*}
\begin{cases}
\pi^+_a: (\mu^+_N)^{-1}(a) \rightarrow K_M =  (\mu^+_N)^{-1}(a)/N, \\
\widetilde \pi^+_a: {U_a} \times \bC \rightarrow K_M = ({U_a}\times \bC)/N_\bC,
\end{cases}
\end{equation*}
defined in Equation \eqref{eq:projs_KM}
\end{itemize}

\subsubsection*{Section \ref{sec: moment map}: Toric actions and moment maps}

\begin{itemize}[\textbullet]
\item $J=(j_1,\ldots,j_r)$ denotes a multi-index where $j_l \in \{1,\ldots,d\}$ are strictly increasing (and will index a choice of $r$ facets)
\item $\bC^d_J=\mathrm{orb}_{\bT^d_\bC}(z)\cong \bC^{d-r}$ denotes the orbit under the standard $\bT^d_\bC$-action of any point $(z_1,\ldots,z_d)$ with $z_j=0 \iff j \in J$
\item $(\bT^d_\bC)_J=\mathrm{stab}_{\bT^d_\bC}(z)\cong (\bC^*)^r$, for any fixed choice $z \in \bC^d_J$ 
\item $\alpha_\bC: \bT^n_\bC \to \bT^d_\bC$ is a right inverse to $\beta_\bC$ and $A=(s^m_k)_{k,m}$ is the matrix representing its linearization
\item $\mu_a$ is the moment map of the Hamiltonian $\bT^n$-action on $\mu^{-1}(a)/N$
\item $(d\rho_\bC)_1^*(\kappa)=-a$
\item $l_i$ in \cite{G94} is called $L_i$ here, which denotes the affine linear defining equation of the $i^{\textrm{th}}$ facet of polyhedron ${\mathrm{\Delta}}$
\item $\cF_j$ denotes the $j^{\textrm{th}}$ facet of the moment polytope
\item $J_f=(j_1,\ldots,j_r)$ indexes the facets which intersect with polytope $\mathrm{\Delta}$ in a face $f$, e.g.~$r=n$ when $f=v$ a vertex
\item $t_1,\ldots,t_n$ are the inhomogeneous coordinates on the dense $(\bC^*)^n=\bT^n_\bC$
\item $\mathring{\mathrm{\Delta}}$ denotes the interior of the polytope
\item $u_j= \log t_j$ are coordinates on the Lie algebra Lie $\bT^n_\bC=\mathfrak{t}^n_\bC$
\item $x_1,\ldots, x_n$ and $\theta_1,\ldots, \theta_n$ are the polar coordinates of the inhomogeneous coordinates $(t_1,\ldots,t_n) \in (\bC^*)^n$
\item $\xi_1,\ldots, \xi_n$ are the moment map coordinates 
\item $\xi_i,\theta_i$ are the action-angle coordinates
\item Notation for Subsection \ref{sec: holom chart}: Holomorphic coordinate charts for $M$
\begin{itemize} 
\item $v$ is a vertex of the polytope $\mathrm{\Delta}$
\item For vertex $v$ and corresponding indexing set $J_v$, 
$$\widetilde V_{J_v}=\{ (z_1, \ldots, z_d) \in \bC^d \mid z_j\neq 0 \text{ if } j\notin J_v\}\cong (\bC^*)^{d-n} \times \bC^{n} \subset U_a$$
\item $V_{J_v}=\widetilde V_{J_v}/N_\bC\cong \bC^n \subset M$ is the open chart of $M$ corresponding to vertex $v$, the union of which cover $M$ 
\item $y_1^{J_v},\ldots,y_n^{J_v}$ (defined in Equation \eqref{eq: V in y coordinates}) denote affine coordinates on the chart $V_{J_v}$
\item $\varphi^{J_v}: V_{J_v} \to \bC^n$ is the chart map defined by $(y_1^{J_v},\ldots,y_n^{J_v})$
\item $p^{J_v}=\mu_a^{-1}(v)$ is a $\bT^n_\bC$ toric fixed point corresponding to vertex $v$, and it's the center of the chart $\varphi^{J_v}$
\item $U_{J_v} = \{(z_1,\ldots, z_d) \in \bC^d \mid  z_j=1 \text { if } j\notin J_v\}\subset \widetilde V_{J_v}\subset \bC^d$ is the slice of the $N_\bC$ action given by the representatives in Equation \eqref{eq: V in y coordinates}, where the letter $U$ is used because it is a subset of $U_a$
\item $\widehat{B}$, $\widehat{A}^{J_v}$ denote choices of bases for matrices $B, A$ so that the corresponding ${\alpha}_\bC^{J_v}$ gives the standard $\bT^n_\bC$ action, as described in the paragraph ``Change of basis of $\bT^n_\bC$ and the embedding of $\bT^n_\bC$ in $\bC^n$-charts" of Subsection \ref{sec: change Tn basis}
\end{itemize}
\end{itemize}

\subsubsection*{Section \ref{sec: kahler}: K\"ahler potential}

\begin{itemize}[\textbullet]
\item $F$ is the K\"ahler potential and is a function of the $x_j$
\item $G$ is the Legendre transform of $F$ and is a function of the $\xi_j$
\item the $j^{\textrm{th}}$ coordinate function of $\mu_a(t_1,\ldots,t_n)$ is $\xi_j$ and equals $\partial F/\partial x_j$

\end{itemize}

\subsubsection*{Section \ref{sec: HMS}: Connection to mirror symmetry}

\begin{itemize}[\textbullet]
\item $W:K_M \to \bC$ denotes the superpotential
\item $s \in \mathrm{\Gamma}(M, K_M^*)$ is a generic section
\item $X= \mathrm{Crit}(W)$ is the critical locus of the superpotential
\item $\mathrm{\Sigma} \subset (\bC^*)^n$ is a hypersurface whose generalized SYZ mirror is a LG model $(K_M, W)$
\item $\mathrm{\Sigma}_2$ is the genus 2 curve 
\item $(Y,v_0)$ is the SYZ mirror of $(\mathrm{Bl}_{\mathrm{\Sigma} \times \{0\}} V \times \bC, y)$, both LG models
\item $\widetilde{Y}$ is the universal cover of $Y$, a toric variety of infinite type
\item $v$ is $W$ written as a function of the moment map coordinates $\xi_j$
\end{itemize}

\bibliographystyle{amsalpha}
\bibliography{glob_gen2_hms}

\end{document}